\title{The Euler system of cyclotomic units and higher Fitting ideals}
\author{Tatsuya Ohshita}
\address{Department of Mathematics, Kyoto University,
Kyoto 606-8502, Japan}
\email{ohshita@math.kyoto-u.ac.jp}
\date{\today}
\newtheorem{thm}{Theorem}[section]
\newtheorem{prop}[thm]{Proposition}
\newtheorem{cor}[thm]{Corollary}
\newtheorem{lem}[thm]{Lemma}
\newtheorem{claim}[thm]{Claim}
\theoremstyle{definition}
\newtheorem{dfn}[thm]{Definition}
\newtheorem{exa}[thm]{Example}
\newtheorem{rem}[thm]{Remark}
\def\plim{\mathop{\underleftarrow{\mathrm{lim}}}\nolimits}
\def\Gal{\mathop{\mathrm{Gal}}\nolimits}
\def\cha{\mathop{\mathrm{char}}\nolimits}
\def\Fitt{\mathop{\mathrm{Fitt}}\nolimits}
\def\det{\mathop{\mathrm{det}}\nolimits}
\def\Im{\mathop{\mathrm{Im}}\nolimits}
\def\Hom{\mathop{\mathrm{Hom}}\nolimits}
\def\ann{\mathop{\mathrm{ann}}\nolimits}
\def\Frob{\text{\rm Fr}}
\newcommand{\mf}[1]{{\mathfrak{#1}}}
\newcommand{\mb}[1]{{\mathbf{#1}}}
\newcommand{\bb}[1]{{\mathbb{#1}}}
\newcommand{\mca}[1]{{\mathcal{#1}}}
\newcommand{\mrm}[1]{{\mathrm{#1}}}
\begin{document}

\begin{abstract}
Kurihara established a refinement of the minus-part of the Iwasawa main conjecture for totally real number fields using the higher Fitting ideals (\cite{Ku}). 
In this paper, we study the higher Fitting ideals of the plus-part of the Iwasawa module associated to the cyclotomic $\bb{Z}_p$-extension of $\bb{Q}(\mu_p)$ for an odd prime number $p$ by similar methods as in \cite{Ku}. 
We define {\em the higher cyclotomic ideals} $\{ \mf{C}_i \}_{i \ge 0}$, which are ideals of the Iwasawa algebra defined by the Kolyvagin derivatives of cyclotomic units, and prove that they give upper bounds of the  higher Fitting ideals.
Our result can be regarded as a refinement of the plus-part of  the Iwasawa main conjecture for $\bb{Q}$.    
\end{abstract}

\maketitle

\section{Introduction}\label{notation}
The Iwasawa main conjecture describes the characteristic ideals of certain Iwasawa modules. 
The characteristic ideals give various important knowledge on the structure of finitely generated torsion Iwasawa modules, but these are not enough to determine the pseudo-isomorphism classes of them (cf.\ \S 2) completely. The pseudo-isomorphism classes of finitely generated torsion Iwasawa modules are determined by the Fitting ideals. In \cite{Ku}, Kurihara proved that all the higher Fitting ideals of the minus-part of the Iwasawa modules associated to the cyclotomic $\bb{Z}_p$-extension of certain CM-fields coincide with the higher Stickelberger ideals, which are defined by analytic objects arising from $p$-adic $L$-functions (cf.\ \cite{Ku} Theorem 1.1). His result is a refinement of the minus-part of the Iwasawa main conjecture for totally real number fields. 

In this paper, we study the plus-part of the Iwasawa modules by similar methods as in \cite{Ku}. We obtain upper bounds of the higher Fitting ideals of the Iwasawa modules. 
The main tool in \cite{Ku} is the Kolyvagin system of Gauss sums. Instead, in this paper, we use the Euler system of cyclotomic units, so we can only treat the Iwasawa modules associated to the cyclotomic $\bb{Z}_p$-extension of subfields of cyclotomic fields. 

We shall state the main theorem of this paper. Fix an odd prime number $p$. Let $\mu_n$ be the group of all $n$-th roots of unity contained in an algebraic closure $\overline{\bb{Q}}$ of $\bb{Q}$. For an integer $m$ with $m \ge 0$, let $F_m$ be the maximal totally real subfield of the cyclotomic field $\bb{Q}(\mu_{p^{m+1}})$. We denote the ring of integers of $F_m$ by $\mca{O}_{F_m}$, and the (unique) prime of $\mca{O}_{F_m}$ above $p$ by $\mf{p}_m$. We put $F_\infty := \bigcup_{m \ge 0}F_m$, and $\Gamma_m := \Gal(F_{\infty}/F_m)$ for any $m \ge 0$. Let $\Lambda:=\bb{Z}_p[[\Gal(F_\infty/\bb{Q})]]=\plim \bb{Z}_p[\Gal(F_m/\bb{Q})]$. We define a $\Lambda$-module $X:= \plim A_m$, where $A_m$ is the $p$-Sylow subgroup of the ideal class group of $F_m$ and the projective limit is taken with respect to the norm map. We say a $\Lambda$-module $M$ is {\em pseudo-null} if its order is finite.
Let $X_{\mrm{fin}}$ be the largest pseudo-null $\Lambda$-submodule of $X$, and $X':=X/X_{\mrm{fin}}$.
We study $X'$ instead of $X$ by a technical reason (cf.\ Lemma \ref{no pn}).

Put $\Delta := \Gal(F_0/\bb{Q})$. Since $\Gal(F_{\infty}/\bb{Q}) \simeq \Delta \times \Gamma_0$ and the order of $\Delta$ is prime to $p$, we have a decomposition $\Lambda=\bigoplus_{\chi}\Lambda_{\chi}$, where $\chi$ runs through all characters in $\widehat{\Delta} :=\Hom(\Delta, \bb{Z}_p^{\times})$, and $\Lambda_\chi$ is a $\bb{Z}_p$-algebra isomorphic to $\bb{Z}_p[[\Gamma_0]]\simeq\bb{Z}_p[[T]]$ on which $\Delta$ acts via $\chi$. Then for any $\Lambda$-module $M$, we decompose $M=\bigoplus_{\chi}M_{\chi}$, where $M_{\chi}:=M\otimes_{\Lambda}\Lambda_{\chi}$. 
Let $\ann_{\Lambda_{\chi}} (X_{\mrm{fin}, \chi})$ be the annihilator of $X_{\mrm{fin},\chi}$ as a $\Lambda_{\chi}$-module. 

In this paper, we study the higher Fitting ideals $\{\Fitt_{\Lambda_\chi,i}(X'_{\chi})\}_{i \ge 0}$ of $X'_{\chi}$ for a non-trivial character $\chi \in \widehat{\Delta}$. 
Note that $X_{\chi}$ and $X'_{\chi}$ belong to the same pseudo-isomorphism class, so the pseudo-isomorphism class of $X_{\chi}$ is determined by the family $\{\Fitt_{\Lambda_\chi,i}(X'_{\chi})\}_{i \ge 0}$ (cf.\ Example \ref{rei-Iwasawa}). 

In the case of the plus-part, a problem lies in how to define the ideals which are substitutes for the higher Stickelberger ideals because we do not have elements as the Stickelberger elements in group rings of Galois groups.  We shall define an ideal $\mf{C}_i$ of $\Lambda$, called {\em the higher cyclotomic ideals}  $\mf{C}_i$ for each $i \in \bb{Z}_{ \ge 0}$ in \S \ref{the section of cyclotomic ideals}, by using the Euler system of cyclotomic units (cf.\ Definition \ref{The $i$-th cyclotomic ideal}). Roughly speaking, first, we shall define the ideals $\mf{C}_{i,F_m,N}$  of the group ring $R_{F_m,N}:=\bb{Z}/p^N[\Gal(F_m/\bb{Q})]$ generated by images of certain Kolyvagin derivatives $\kappa_{m,N}(\xi)$ by {\em all $R_{F_m,N}$-homomorphisms} $\xymatrix{F_m^\times/(F_m^\times)^{p^N} \ar[r] & R_{F_m,N}}$, then we shall define $\mf{C}_i$ by the projective limit of them. 

The goal of this paper is to prove the following theorem. 
\begin{thm}
\label{Main theorem}
Let $\chi  \in \widehat \Delta$ be a non-trivial character. 
\begin{enumerate}
\item $\mf{C}_{0,\chi} \subseteq \Fitt_{\Lambda_\chi, 0}(X'_{\chi})$.
\item $\ann_{\Lambda_{\chi}} (X_{\mrm{fin}, \chi})\Fitt_{\Lambda_\chi, i}(X'_{\chi})\subseteq \mf{C}_{i, \chi}$ for  $i \ge 0$.
\end{enumerate} 
\end{thm}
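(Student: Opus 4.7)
The strategy is to establish both inclusions at the finite level over
$R_{F_m,N}=\bb{Z}/p^N[\Gal(F_m/\bb{Q})]$
and then pass to the inverse limit via the defining identity
$\mf{C}_i=\plim_{m,N}\mf{C}_{i,F_m,N}$.
Parts (1) and (2) are driven by different mechanisms: the former by the Iwasawa main conjecture plus a direct computation with the base Euler system element, the latter by a full Kolyvagin derivative argument.

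For part (1), the key point is that, by Lemma \ref{no pn}, $X'_\chi$ has no nonzero pseudo-null $\Lambda_\chi$-submodule, so $\Fitt_{\Lambda_\chi,0}(X'_\chi)$ coincides with the characteristic ideal $\cha_{\Lambda_\chi}(X_\chi)$. The main conjecture of Mazur--Wiles identifies $\cha_{\Lambda_\chi}(X_\chi)$ with $\cha_{\Lambda_\chi}(E_\chi/C_\chi)$, where $E$ and $C$ are the inverse limits of global units and of cyclotomic units respectively. At depth $i=0$ the Kolyvagin derivative $\kappa_{m,N}(\xi;1)$ reduces to the cyclotomic unit itself modulo $p^N$, so $\mf{C}_{0,\chi}$ is generated by the images $\phi(\xi)$ as $\phi$ ranges over $R_{F_m,N}$-homomorphisms $F_m^\times/(F_m^\times)^{p^N}\to R_{F_m,N}$, compatibly in $(m,N)$. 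A direct computation with any presentation of $E_\chi/C_\chi$, using that the cyclotomic-unit system generates $C_\chi$ as a $\Lambda_\chi$-module, shows that each such probe lands in $\Fitt_{\Lambda_\chi,0}(E_\chi/C_\chi)=\cha_{\Lambda_\chi}(E_\chi/C_\chi)$, which completes (1).

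For part (2), which is the main content, I would implement the Kolyvagin derivative machinery at finite level. Take a square presentation $\Lambda_\chi^{r}\xrightarrow{\Psi}\Lambda_\chi^{r}\to X'_\chi\to 0$, available because $X'_\chi$ is a finitely generated torsion $\Lambda_\chi$-module with no nontrivial pseudo-null submodule, so that $\Fitt_{\Lambda_\chi,i}(X'_\chi)$ is generated by the $(r-i)\times(r-i)$ minors of $\Psi$. By class field theory, finite-level quotients of $X_\chi$ are computed by $p$-parts of ray class groups of $F_m$, and a Chebotarev density argument supplies, for each $m$ and $N$, primes $\ell_1,\dots,\ell_r$ of $F_m$ whose Frobenii span the relevant $R_{F_m,N}$-quotient up to contributions from $X_{\mrm{fin},\chi}$. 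For each subset $S\subset\{1,\dots,r\}$ of size $r-i$ the Euler system furnishes a Kolyvagin derivative $\kappa_{m,N}(\xi;\prod_{j\notin S}\ell_j)\in F_m^\times/(F_m^\times)^{p^N}$, and the Kolyvagin congruences combined with the $\ell_j$-valuation maps ($j\in S$) translate the $(r-i)\times(r-i)$ minor of $\Psi$ indexed by $S$ into the image of this derivative under an explicit $R_{F_m,N}$-homomorphism $F_m^\times/(F_m^\times)^{p^N}\to R_{F_m,N}$. By definition of $\mf{C}_{i,F_m,N}$, this image sits in $\mf{C}_{i,F_m,N}$; after multiplying by an element of $\ann_{\Lambda_\chi}(X_{\mrm{fin},\chi})$ to absorb the Chebotarev error, the inclusion survives the inverse limit.

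The main obstacle is that, in contrast to the minus-part setting of \cite{Ku} where Stickelberger elements live canonically inside the integral group ring, the plus-part Euler system only produces units in $F_m^\times$; they can be accessed only through the \emph{full} collection of $R_{F_m,N}$-homomorphisms out of $F_m^\times/(F_m^\times)^{p^N}$, which is precisely why $\mf{C}_i$ was defined by taking all such probes. The technical core of the proof is therefore twofold: first, to show that the Kolyvagin congruences genuinely realize $(r-i)\times(r-i)$ minors of $\Psi$ as probes of derivatives of $\xi$; second, to pin down the exact obstruction coming from $X_{\mrm{fin},\chi}$ in the Chebotarev selection, which is what forces the annihilator factor $\ann_{\Lambda_\chi}(X_{\mrm{fin},\chi})$ in (2). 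A further subtlety is the compatibility of the Chebotarev choices under the inverse system in $(m,N)$, which must be arranged so that the finite-level containments assemble into an inverse-limit statement rather than merely a pointwise family of them.
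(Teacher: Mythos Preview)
Your outline for part (1) is essentially correct and matches the paper: Proposition \ref{size} shows $\mf{C}_{0,\chi}\subseteq\cha_{\Lambda_\chi}\big((\overline{\mca{O}_\infty^1}/\overline{C_\infty^1})_\chi\big)$, and the Iwasawa main conjecture together with Lemma \ref{no pn} identifies the right-hand side with $\Fitt_{\Lambda_\chi,0}(X'_\chi)$.

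For part (2), however, the mechanism you describe has a genuine gap. You propose to take a single Kolyvagin derivative $\kappa(\xi_{\prod_{j\notin S}\ell_j})$ and recover the minor of $\Psi$ indexed by $S$ via the $\ell_j$-valuation maps for $j\in S$. But by Proposition \ref{[] and phi} (1), the $\ell_j$-component of $[\kappa(\xi_n)]$ vanishes whenever $\ell_j\nmid n$; since the primes $\ell_j$ with $j\in S$ are precisely those \emph{not} dividing your $n$, all the maps you invoke are identically zero on the derivative in question. No minor is produced this way. The paper's route is substantially different and follows Kurihara: one fixes the target minor $\det M_i$, then uses Proposition \ref{chebo appli} \emph{inductively} to choose a well-ordered product $q\nu=q\ell_2\cdots\ell_i$ so that each local reciprocity map $\bar\phi^{\ell_{r+1}}$ realizes a specific coordinate $\beta_{m_r}$ of the presentation map (condition (x3)), not merely so that Frobenii generate. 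One then works not with raw derivatives but with Kurihara's elements $x_{\nu_r,q}$ (Definition \ref{x}), carefully designed $R_N$-combinations of the $\kappa(\xi_d)$ over $d\mid q\nu_r$; Lemma \ref{ab} forces enough vanishing in the column vectors $\beta(x_{\nu_r,q}^\delta)$ and $\alpha([x_{\nu_r,q}^\delta])$ that a cofactor expansion yields the chain of identities $(\det M_{r-1})\bar\phi^{\ell_{r+1}}(x_{\nu_r,q})=\pm(\det M_r)\bar\phi^{\ell_r}(x_{\nu_{r-1},q})$ of Proposition \ref{ind}, anchored at $r=1$ by the main conjecture. One does \emph{not} make compatible Chebotarev choices across $m$; rather one lets $m\to\infty$ and uses that the nonzero factors $\det M_{r-1}\in\Lambda_\chi$ can be cancelled in the limit, so $\bar\phi^{\ell_{i+1}}(x_{\nu,q})_m$ converges to $\pm\delta\det M_i\in\mf{C}_{i,\chi}$. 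Finally, the factor $\delta\in\ann_{\Lambda_\chi}(X_{\mrm{fin},\chi})$ is not a ``Chebotarev error'': it enters through Lemmas \ref{delta kowaza} and \ref{delta kowaza 2}, where one needs $\delta$ to push elements of $(F_m^\times/p^N)_\chi$ with divisor in $(J/p^N)_\chi$ into the finite module $\mca{F}/p^N$ on which the algebraic map $\beta$ is defined; the obstruction is $X^{\Gamma_m}\subseteq X_{\mrm{fin}}$ (Corollary \ref{delta}).
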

For the case of the trivial character, see Remark \ref{structure of trivial character part}. 
By Theorem \ref{Main theorem}, for $i=0$, 
we obtain both upper and lower bounds of $\Fitt_{\Lambda_\chi, 0}(X'_{\chi})$. 
Since ``error terms" $\ann_{\Lambda_{\chi}} (X_{\mrm{fin}, \chi})$ is an ideal of $\Lambda_\chi$ whose index is finite, Theorem \ref{Main theorem} for $i=0$ determines the characteristic ideal of $X'_{\chi}$, which is equal to the characteristic ideal of $X_{\chi}$ (cf.\ Remark \ref{equivalence between Th and IMC}). 
Therefore, our theorem can be regarded as a refinement of the Iwasawa main conjecture. We use the Iwasawa main conjecture in the proof, so we do not give a new proof of the Iwasawa main conjecture. On the other hand, for $i \ge 1$, we obtain only upper bounds of $\Fitt_{\Lambda_\chi, i}(X'_{\chi})$.

Theorem \ref{Main theorem} gives some knowledge on the structure of the ``growing-part" $X'$ along the cyclotomic $\bb{Z}_p$-extension of $F_0$. But it gives nothing on the pseudo-null-part $X_{\rm{fin}}$.  
In particular, if the Greenberg conjecture holds (i.e.\ if $X_{\chi}$ is pseudo-null, for example, see \cite{Gr} Conjecture 3.4), then our theorem says nothing.

\begin{rem}
At the end of introduction, we remark on the case of $F_0$. 
Rubin determined the structure of $A_0$ (cf.\ \cite{Ru}, \cite{Ru2} and \cite{MR}). 
In our notation, Rubin's result implies that the ideals $\{\mf{C}_{i,F_0,N} \}_{i \ge 0}$ give the structure of $A_{0,\chi}$. 
For detail, see Remark \ref{0-layer}.  
\end{rem}

\subsection*{Notation}
In this paper, we use the following notation.

We fix an algebraic closure $\overline{\bb{Q}}$ of $\bb{Q}$. In this paper, an algebraic number field is a subfield of $\overline{\bb{Q}}$ which is a finite extension of $\bb{Q}$.

Let $L/K$ be a finite Galois extension of algebraic number fields. Let $\lambda$ be a prime ideal of $K$, and $\lambda'$ a prime ideal of $L$ above $\lambda$. We denote the completion of $K$ at $\lambda$ by $K_{\lambda}$. If $\lambda$ is unramified in $L/K$,  the geometric Frobenius at $\lambda'$ is denoted by $(\lambda', L/K) \in \Gal(L/K)$. (Note that some authors use the inverse of our $(\lambda', L/K)$.)

We fix a family of embeddings $\{ \xymatrix{\ell_{\overline{\bb{Q}}}\colon\overline{\bb{Q}}\ar@{^{(}->}[r] & \overline{\bb{Q}}_\ell} \}_{\ell: \rm{prime}}$ satisfying a technical condition (A) as follows.
\begin{itemize}
\item[(A)] For any subfield $K \subset \overline{\bb{Q}}$ which is a finite Galois extension of $\bb{Q}$ and any element $\sigma \in \Gal(K/\bb{Q})$, there exist infinitely many prime numbers $\ell$ such that $\ell$ is unramified in $K/\bb{Q}$ and $(\ell_K, K/\bb{Q})= \sigma$, where $\ell_K$ is the prime ideal corresponding to the  embedding $\ell_{\overline{\bb{Q}}}|_K$.
\end{itemize}
We can prove the existence of a family satisfying the condition (A) by the Chebotarev density theorem easily.

Let $\ell$ be a prime number. For an algebraic number field $K$, let $\ell_K$ be the prime ideal of $K$ corresponding to the  embedding $\ell_{\overline{\bb{Q}}}|_K$. Then, if $K_1 \supseteq K_2$ is an extension of algebraic number fields, we have $\ell_{K_1}| \ell_{K_2}$. 

For an abelian group $M$ and a positive integer $n$, we write $M/n$ in place of $M/nM$ for simplicity. In particular, for the multiplicative group $K^\times$ of a field $K$, we write $K^\times/p^N$ in place of $K^\times/(K^\times)^{p^N}$.

For a $\Lambda$-module $M$, we denote the $\Gamma_m$-invariants (resp.\ $\Gamma_m$-coinvariants) of $M$ by $M^{\Gamma_m}$ (resp.\  $M_{\Gamma_m}$). 

Let $R$ be a commutative ring.  
For an $R$-module $M$, we define $\ann_{R}(M)$ to be  annihilator of $M$. Namely, $$\ann_R(M):=\{ a \in R \ | \ am=0 \ \text{for any}\ m \in M \}.$$  

\section*{Acknowledgement}
The author would like to thank Professors Kazuya Kato, Masato Kurihara and Tetsushi Ito for their helpful advices.
This work was supported by Grant-in-Aid for JSPS Fellows (22-2753) from Japan Society for the Promotion of Science.

\section{Preliminaries}
In this section, we recall some preliminary results used in this paper. 
We use the same notation as in \S 1. In particular, $F_m$ is the maximal totally real subfield of the cyclotomic field $\bb{Q}(\mu_{p^{m+1}})$, and $\Lambda=\bb{Z}_p[[\Gal(F_\infty/\bb{Q})]]$.

\subsection{}
Here, we briefly recall the plus-part of the Iwasawa main conjecture.

First, we recall the definition of the characteristic ideals.
Let $\chi \in \widehat{\Delta}$ be a character.
Two $\Lambda_{\chi}$-modules $M$ and $N$ are said to be {\em pseudo-isomorphic} and we write $M \sim N$ if there is a $\Lambda_{\chi}$-homomorphism 
$\xymatrix{M \ar[r] & N}$ whose kernel and cokernel are pseudo-null. The relation $\sim $ is an equivalence relation of finitely generated torsion $\Lambda_{\chi}$-modules. For a finitely generated torsion $\Lambda_{\chi}$-module $M$, there exists a  finite sequence $f_1, f_2, \dots, f_n$  of non-zero elements of $\Lambda_{\chi}$ such that $f_i$ divides $f_{i+1}$ for $1 \le i \le n-1$ and  $M \sim \bigoplus_{i=1}^n\Lambda_{\chi}/f_i\Lambda_{\chi}$. We define the characteristic ideal $\cha_{\Lambda_{\chi}}(M)$ of $M$ by $$\cha_{\Lambda_{\chi}}(M):=f_1f_2\cdots f_n\Lambda_{\chi}.$$
Note that characteristic ideals are well-defined and depend on the pseudo-isomorphism classes.

To state the Iwasawa main conjecture, we next recall some preliminary results on unit groups. 

For a subgroup $M$ of the unit group $\mca{O}_{F_m}^\times$, we define $M^1$ by $$M^1:=\{a \in M \ |\  a \equiv 1 \pmod{\mf{p}_m} \}.$$ When $M=\mca{O}_{F_m}^\times$, we write $\mca{O}_{F_m}^1$ in place of $(\mca{O}_{F_m}^\times)^1$. 

We write $F_{\mf{p}_m}$ for the completion of $F_m$ at the place $\mf{p}_m$. For a subset $M$ of $F_m$, let $\overline{M}$ denote the closure of $M$ in $F_{\mf{p}_m}$. Let $C_{F_m}$ be the group of cyclotomic units of $F_m$ (cf.\ \cite{CS} Definition 4.3.1 or \cite{Wa} \S 8.1).  We define $$\overline{\mca{O}_{\infty}^1}:=\plim\overline{\mca{O}_{F_m}^1},$$ $$\overline{C_{\infty}^1}:=\plim\overline{C_{F_m}^1},$$ where the projective limit is taken with respect to the norm map. 

\begin{prop}
\label{modules}
\begin{enumerate}
\item The $\Lambda$-module  $X$ is finitely generated torsion.
\item The $\Lambda$-module $\overline{\mca{O}_{\infty}^1}$ is free of rank 1.
\item The $\Lambda$-module $\overline{C_{\infty}^1}$ is free of rank 1.
\end{enumerate}
\end{prop}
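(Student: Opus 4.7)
My plan is to deduce each of the three assertions from classical results in Iwasawa theory.

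For (1), I would use Iwasawa's fundamental finiteness/torsion theorem. Writing $X = \Gal(L_\infty/F_\infty)$ for the Galois group of the maximal abelian unramified pro-$p$ extension of $F_\infty$, the key input is that $p$ is totally ramified in $F_\infty/\bb{Q}$ with the single prime $\mf{p}_m$ above $p$ at each layer. The standard control theorem then gives $X_{\Gamma_m} \twoheadrightarrow A_m$ with finite kernel; combined with the finiteness of each $A_m$, Nakayama's lemma applied through $\Lambda/(\gamma_0-1)=\bb{Z}_p[\Delta]$ forces $X$ to be finitely generated over $\Lambda$. Torsion follows from the boundedness of the $\bb{Z}_p$-ranks of $A_m$ along the tower, which is Iwasawa's formula for abelian totally real fields.

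For (2), the strategy is to invoke Iwasawa's theorem on local units, proved via Coleman's theory of power series. Iwasawa showed that the inverse limit $\plim \mca{U}_n^1$ of principal local units along $\bb{Q}_p(\mu_{p^{n+1}})/\bb{Q}_p$ is free of rank $1$ over $\bb{Z}_p[[\Gal(\bb{Q}(\mu_{p^\infty})/\bb{Q})]]$. Projecting onto the $+$-part under complex conjugation yields that $\plim U_{F_m,\mf{p}_m}^1$ is free of rank $1$ over $\Lambda$. It then remains to identify $\overline{\mca{O}_\infty^1}$ with this local inverse limit: Brumer's theorem (Leopoldt's conjecture for abelian fields) gives $\mathrm{rank}_{\bb{Z}_p}\overline{\mca{O}_{F_m}^1} = [F_m:\bb{Q}]-1$ at each layer, and a standard argument specific to the cyclotomic $\bb{Z}_p$-extension shows that the $\bb{Z}_p$-rank-one defect between $\overline{\mca{O}_{F_m}^1}$ and $U_{F_m,\mf{p}_m}^1$ collapses upon passing to the inverse limit.

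For (3), I would produce a concrete norm-compatible system $\eta=(\eta_m)_m$ of cyclotomic units in $\plim \overline{C_{F_m}^1}$, using an explicit construction via norms from $\bb{Q}(\mu_{p^{m+1}})$ to $F_m$, and verify that the $\Lambda$-submodule of $\overline{C_\infty^1}$ generated by $\eta$ exhausts the whole module. Once this cyclicity is established, freeness of rank $1$ is automatic: $\overline{C_\infty^1}$ is then a cyclic $\Lambda$-submodule of the free $\Lambda$-module $\overline{\mca{O}_\infty^1}$ supplied by part (2), hence torsion-free, and any non-zero cyclic torsion-free $\Lambda$-module is isomorphic to $\Lambda$. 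The crucial input is that $C_{F_m}$ is generated over $\bb{Z}_p[\Gal(F_m/\bb{Q})]$ by a single compatible element---the classical property of the cyclotomic unit group central to the Euler system formalism.

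The main obstacle will be the global-to-local identification in (2): confirming that the rank-one gap between $\overline{\mca{O}_{F_m}^1}$ and $U_{F_m,\mf{p}_m}^1$ really disappears in the inverse limit, so that the closure of global principal units fills out the full local inverse limit. All the ingredients are standard (cf.\ Washington's book and Coleman's work), but assembling them requires care about the distinction between the plus-parts of local units and the closure of global units along the cyclotomic tower.
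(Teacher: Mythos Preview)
The paper's own proof consists of three bare citations to \cite{Wa} and \cite{CS}, so your proposal is really a sketch of what lies behind those citations. Your outlines for (1) and (3) are the standard arguments and are essentially correct; in particular, the cyclicity of $\overline{C_\infty^1}$ over $\Lambda$ (your key input for (3)) is exactly the content of Corollary~\ref{gene.cyc.lim} in the paper.

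For (2), however, there is a genuine gap. You correctly identify Iwasawa's theorem on local units (Coleman theory) as the starting point, and you correctly flag the passage from $U_\infty^1:=\plim U_{F_m,\mf{p}_m}^1$ to $\overline{\mca{O}_\infty^1}$ as the main obstacle. But your proposed resolution---that the rank-one defect between $\overline{\mca{O}_{F_m}^1}$ and $U_{F_m,\mf{p}_m}^1$ ``collapses upon passing to the inverse limit''---is not what happens. Class field theory gives the four-term exact sequence
\[
0 \longrightarrow \overline{\mca{O}_\infty^1} \longrightarrow U_\infty^1 \longrightarrow \mf{X}_\infty \longrightarrow X \longrightarrow 0,
\]
where $\mf{X}_\infty=\Gal(M_\infty/F_\infty)$ is the Galois group of the maximal abelian pro-$p$ extension of $F_\infty$ unramified outside $p$. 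The image of $U_\infty^1$ in $\mf{X}_\infty$ is the inertia subgroup, and it is \emph{not} zero: there are plenty of abelian $p$-extensions of $F_\infty$ ramified only at $p$. So $\overline{\mca{O}_\infty^1}$ is a proper rank-one $\Lambda$-submodule of $U_\infty^1\simeq\Lambda$, i.e.\ an ideal, and an ideal of $\Lambda$ is free of rank one only if it is principal. Something more is needed---for example, showing that the torsion quotient $U_\infty^1/\overline{\mca{O}_\infty^1}$ has no nonzero finite $\Lambda$-submodule, which forces $\overline{\mca{O}_\infty^1}$ to be reflexive and hence free (since $\Lambda_\chi$ is a two-dimensional regular local ring). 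This is the substance of \cite{CS} Theorem~4.7.1, and it does not reduce to a rank count.
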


\begin{proof}
The first assertion is a special case of \cite{Wa} Lemma 13.18.
The second assertion is \cite{CS} Theorem 4.7.1.
For the last assertion, see the proof of \cite{CS} Theorem 4.4.1.
\end{proof}

Note that by Proposition \ref{modules} (2) and (3), the $\chi$-part $(\overline{\mca{O}_{\infty}^1}/\overline{C_{\infty}^1})_{\chi} = ( \overline{\mca{O}_{\infty}^1})_{\chi}/(\overline{C_{\infty}^1})_{\chi}$ is a finitely generated torsion $\Lambda_{\chi}$-module. Then, by the Proposition \ref{modules}, we can consider $\cha_{\Lambda_{\chi}}(X_{\chi})$ and $\cha_{\Lambda_{\chi}}\big((\overline{\mca{O}_{\infty}^1}/\overline{C_{\infty}^1})_{\chi}\big)$. The statement of the Iwasawa main conjecture is the following:
\begin{itemize}
\item[] 
$\cha_{\Lambda_{\chi}}(X_{\chi})=\cha_{\Lambda_\chi}\big((\overline{\mca{O}_{\infty}^1}/\overline{C_{\infty}^1})_{\chi}\big)$ \ for any character $\chi \in \widehat{\Delta}$.
\end{itemize}

\subsection{} 
Here, we recall some results on $\Gamma_m$-coinvariants (or invariants) of certain $\Lambda$-modules.
They play important roles in the technical aspects in this paper.
In particular, we need them when we determine ``error terms" in Theorem \ref{Main theorem}.
\begin{prop}[a special case of \cite{Wa} Proposition 13.22] 
\label{ideal class}
Let $m$ be an integer with $m \ge 0$. Then, we have the canonical isomorphism 
$$X_{\Gamma_m}\simeq A_m.$$
\end{prop}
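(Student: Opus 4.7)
The plan is to translate the statement into a Galois-theoretic identification via class field theory, exploiting the well-known fact that $\mathfrak{p}_m$ is the unique prime of $F_m$ above $p$ and is totally ramified in $F_\infty/F_m$, while the maximal unramified abelian pro-$p$ extension of $F_\infty$ is unramified everywhere. These two ``orthogonal'' ramification behaviors will allow the inertia subgroup at $\mathfrak{p}_m$ to split off cleanly from the abelianization.

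Concretely, I would let $L_m$ denote the maximal unramified abelian $p$-extension of $F_m$ and $L_\infty$ the maximal unramified abelian pro-$p$ extension of $F_\infty$, so that by class field theory $\Gal(L_m/F_m) \simeq A_m$ and $\Gal(L_\infty/F_\infty) \simeq X$ (compatibly, via the norm maps in the inverse limit defining $X$). The first key step is to introduce the subfield $\mathcal{L}_m \subseteq L_\infty$ which is maximal among subextensions abelian over $F_m$. Since $X$ is abelian, the commutator subgroup of $\Gal(L_\infty/F_m)$ is generated by elements of the form $(\gamma-1)x$ with $\gamma \in \Gamma_m$ and $x \in X$, i.e.\ equals $I_m X$ where $I_m$ is the augmentation ideal of $\bb{Z}_p[[\Gamma_m]]$. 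Consequently there is an exact sequence
\[
0 \longrightarrow X_{\Gamma_m} \longrightarrow \Gal(\mathcal{L}_m/F_m) \longrightarrow \Gamma_m \longrightarrow 0.
\]

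The second step is to analyze inertia at $\mathfrak{p}_m$ inside $\Gal(\mathcal{L}_m/F_m)$. Since $L_\infty/F_\infty$ is unramified, the inertia group $I \subseteq \Gal(\mathcal{L}_m/F_m)$ of any prime above $\mathfrak{p}_m$ intersects $X_{\Gamma_m}$ trivially; since $F_\infty/F_m$ is totally ramified at $\mathfrak{p}_m$, the image of $I$ in $\Gamma_m$ is all of $\Gamma_m$. Thus $I$ maps isomorphically onto $\Gamma_m$ and (the sequence being of abelian groups) gives a splitting
\[
\Gal(\mathcal{L}_m/F_m) = X_{\Gamma_m} \oplus I.
\]
Its fixed field $\mathcal{L}_m^{I}$ is the maximal unramified subextension of $\mathcal{L}_m/F_m$, with $\Gal(\mathcal{L}_m^{I}/F_m) \simeq X_{\Gamma_m}$.

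The final step is to identify $\mathcal{L}_m^{I}$ with $L_m$. On the one hand, $\mathcal{L}_m^{I}/F_m$ is an unramified abelian $p$-extension, so $\mathcal{L}_m^{I} \subseteq L_m$. On the other hand, $L_m F_\infty / F_\infty$ is unramified (because $L_m/F_m$ is unramified everywhere and $F_\infty/F_m$ is ramified only at $\mathfrak{p}_m$, where it is totally ramified), so $L_m \subseteq L_\infty$; combined with the fact that $L_m/F_m$ is abelian and unramified, this forces $L_m \subseteq \mathcal{L}_m^{I}$. Hence $L_m = \mathcal{L}_m^{I}$ and the isomorphism $X_{\Gamma_m} \simeq A_m$ follows; checking that this isomorphism coincides with the canonical map induced by the projection $X \to A_m$ is a routine compatibility. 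The main obstacle in this argument is step (c), namely ensuring that the inertia at $\mathfrak{p}_m$ splits the sequence as a direct summand, which is exactly where the hypothesis that $\mathfrak{p}_m$ is the unique prime above $p$ (and totally ramified in $F_\infty/F_m$) is essential.
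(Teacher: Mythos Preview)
The paper does not supply its own proof of this proposition; it simply records it as a special case of Washington \cite{Wa}, Proposition~13.22. Your argument is correct and is essentially Washington's: introduce the maximal unramified abelian pro-$p$ extension $L_\infty/F_\infty$, pass to the maximal sub\-extension $\mathcal{L}_m$ abelian over $F_m$, and use the unique totally ramified prime $\mf{p}_m$ to split the resulting exact sequence $0 \to X_{\Gamma_m} \to \Gal(\mathcal{L}_m/F_m) \to \Gamma_m \to 0$ via its inertia subgroup, thereby identifying $X_{\Gamma_m}$ with $\Gal(L_m/F_m)\simeq A_m$. So your proposal is both correct and in line with the source the paper defers to.
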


For each $m \ge 0$, we define 
\begin{align*}
N_{\infty}(\mca{O}_{F_m}^\times) &:= \bigcap_{n\geq m}N_{F_n/F_m}(\mca{O}_{F_n}^\times), \\
N_{\infty}(\mca{O}_{F_m}^1) &:= \bigcap_{n\geq m} N_{F_n/F_m}(\mca{O}_{F_n}^1),  \intertext{and} 
N_{\infty}(\overline{\mca{O}_{F_m}^1}) &:= \bigcap_{n\geq m}N_{F_{\mf{p}_n}/F_{\mf{p}_m}}(\overline{\mca{O}_{F_n}^1}).
\end{align*}
Note that we have $N_{\infty}(\overline{\mca{O}_{F_m}^1}) =\overline{N_{\infty}(\mca{O}_{F_m}^1)}$.

\begin{prop}
\label{projection}
Let $m$ be an integer with $m \ge 0$.
\begin{enumerate}
\item The canonical homomorphism $\xymatrix{\mrm{pr}(m,C)\colon (\overline{C_{\infty}^1})_{\Gamma_m} \ar[r] & \overline{C_{F_m}^1}}$ is surjective. 
The kernel of $\mrm{pr}(m,C)$ is isomorphic to $\bb{Z}_p$ with the trivial action of $\Gal(F_\infty/\bb{Q})$. {\em (\cite{CS} Theorem 4.6.3.)}
\item The image of the canonical homomorphism $\xymatrix{\mrm{pr}(m,\mca{O}^1)\colon(\overline{\mca{O}_{\infty}^1})_{\Gamma_m} \ar[r] & \overline{\mca{O}_{F_m}^1}}$ is $N_{\infty}(\overline{\mca{O}_{F_m}^1})$.  The kernel of $\mrm{pr}(m,\mca{O}^1)$ is isomorphic to $\bb{Z}_p$ with the trivial action of $\Gal(F_\infty/\bb{Q})$. {\em (\cite{CS} Theorem 4.7.4.)}
\end{enumerate}
\end{prop}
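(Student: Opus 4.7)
The plan is to leverage the rank-one $\Lambda$-freeness of both $\overline{\mca{O}_\infty^1}$ and $\overline{C_\infty^1}$ from Proposition \ref{modules}: taking $\Gamma_m$-coinvariants converts each into $\Lambda_{\Gamma_m} = \bb{Z}_p[\Gal(F_m/\bb{Q})]$, the regular representation of $\Gal(F_m/\bb{Q})$ over $\bb{Z}_p$. The proof then splits into an image computation and a kernel computation.

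For the image in (2), I would invoke the Mittag-Leffler property of projective limits of compact topological groups: the image of $\plim_n \overline{\mca{O}_{F_n}^1} \to \overline{\mca{O}_{F_m}^1}$ equals $\bigcap_{n \ge m} N_{F_n/F_m}(\overline{\mca{O}_{F_n}^1}) = N_\infty(\overline{\mca{O}_{F_m}^1})$, and passing to $\Gamma_m$-coinvariants does not change this image since $(\gamma_m - 1)\overline{\mca{O}_\infty^1}$ maps to zero at level $m$. The surjectivity in (1) is then a direct consequence of the explicit norm-compatibility of the cyclotomic units: $N_{F_n/F_m}$ is already surjective on the principal cyclotomic units at each finite stage by the standard distribution relation, so there is no finite-index discrepancy to take intersections against.

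For both kernels, I would first compare $\bb{Z}_p$-ranks. The domain $\bb{Z}_p[\Gal(F_m/\bb{Q})]$ has rank $[F_m:\bb{Q}]$. On the other hand, by Dirichlet's unit theorem applied to the totally real field $F_m$, together with Leopoldt's conjecture (known for abelian extensions of $\bb{Q}$ by Brumer), $\overline{\mca{O}_{F_m}^1}$ is a torsion-free $\bb{Z}_p$-module of rank $[F_m:\bb{Q}] - 1$, and $N_\infty(\overline{\mca{O}_{F_m}^1})$ has finite index in it; the analogous statement holds for $\overline{C_{F_m}^1}$. So each kernel has $\bb{Z}_p$-rank exactly one. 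Rationalizing and using the Galois-equivariant decomposition of $\overline{\mca{O}_{F_m}^1} \otimes \bb{Q}_p$ as the regular representation minus the trivial representation (Dirichlet, using that $F_m$ is totally real), one sees that the missing isotypic component is precisely the trivial character, pinning the kernel down as $\bb{Z}_p$ with trivial Galois action.

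The hard part will be the identification of the $\Gal(F_\infty/\bb{Q})$-action on the kernel, not simply its rank. For the unit case this follows cleanly from the character decomposition above, but for (1) one must further verify that $\overline{C_{F_m}^1} \otimes \bb{Q}_p$ already fills up all non-trivial isotypic components of the regular representation -- an analytic-class-number-formula type input requiring an explicit Coleman-theoretic generator of $\overline{C_\infty^1}$ as a $\Lambda$-module and a computation of the trace of this generator at level $m$. This is the computation carried out in the proofs of \cite{CS} Theorems 4.6.3 and 4.7.4, on which the final identification relies.
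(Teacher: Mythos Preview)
The paper does not supply an independent proof of this proposition: it is stated as a direct citation of \cite{CS} Theorems 4.6.3 and 4.7.4, with no further argument given. Your sketch therefore already goes beyond what the paper offers.

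That said, your outline is essentially correct and is in the spirit of the proofs in \cite{CS}. The $\Lambda$-freeness from Proposition~\ref{modules} identifies each domain with the regular representation $\bb{Z}_p[\Gal(F_m/\bb{Q})]$; the image statement in (2) follows from compactness (Mittag--Leffler) exactly as you say, and the surjectivity in (1) from the norm relations for cyclotomic units. The kernel is then pinned down by a $\bb{Z}_p$-rank count against Dirichlet's unit theorem together with Leopoldt for abelian fields, and the identification of the Galois action as trivial comes from the character decomposition of $\overline{\mca{O}_{F_m}^1}\otimes\bb{Q}_p$ as the regular representation minus the trivial character. For (1) one also needs that $\overline{C_{F_m}^1}$ has finite index in $\overline{\mca{O}_{F_m}^1}$ so that the same rational decomposition applies; this is the ``analytic-class-number-formula type input'' you allude to. Since you ultimately defer the explicit generator computation to the same theorems in \cite{CS}, your proposal and the paper agree in endpoint---you have simply supplied the scaffolding the paper omits.
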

The following corollary immediately follows from Proposition \ref{projection}.
\begin{cor}\label{coinv}
For all $m \ge 0$ and non-trivial character $\chi \in \widehat{\Delta}$,
we have the following canonical isomorphisms of $\bb{Z}_p[\Gal(F_m/\bb{Q})]$-modules:
\begin{enumerate}
\item $(\overline{C_{\infty}^1})_{\Gamma_m,\chi} \simeq  (\overline{C_{F_m}^1})_{\chi};$
\item $(\overline{\mca{O}_{\infty}^1})_{\Gamma_m,\chi} \simeq N_{\infty}(\overline{\mca{O}_{F_m}^1})_\chi$.
\end{enumerate}
\end{cor}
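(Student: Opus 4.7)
The plan is to derive Corollary \ref{coinv} directly from Proposition \ref{projection} by passing to $\chi$-isotypic components. Proposition \ref{projection}(1) gives a short exact sequence of $\bb{Z}_p[\Gal(F_m/\bb{Q})]$-modules
$$0 \longrightarrow \bb{Z}_p \longrightarrow (\overline{C_{\infty}^1})_{\Gamma_m} \xrightarrow{\mrm{pr}(m,C)} \overline{C_{F_m}^1} \longrightarrow 0$$
in which $\Delta$ acts trivially on the leftmost $\bb{Z}_p$, and Proposition \ref{projection}(2) supplies the analogous sequence
$$0 \longrightarrow \bb{Z}_p \longrightarrow (\overline{\mca{O}_{\infty}^1})_{\Gamma_m} \xrightarrow{\mrm{pr}(m,\mca{O}^1)} N_{\infty}(\overline{\mca{O}_{F_m}^1}) \longrightarrow 0,$$
again with trivial $\Delta$-action on the kernel.

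Because $|\Delta|=p-1$ is a unit in $\bb{Z}_p$, the orthogonal idempotents $e_\chi = \frac{1}{|\Delta|}\sum_{\sigma \in \Delta}\chi^{-1}(\sigma)\sigma \in \bb{Z}_p[\Delta]$ give the direct product decomposition $\Lambda = \bigoplus_{\chi \in \widehat{\Delta}} \Lambda_\chi$, so the functor $M \mapsto M_\chi = M \otimes_\Lambda \Lambda_\chi$ is exact on $\Lambda$-modules. Applying it to each of the two sequences above therefore preserves exactness.

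The only remaining ingredient is the observation that, for a non-trivial character $\chi \in \widehat{\Delta}$, the $\chi$-part of any $\Lambda$-module on which $\Delta$ acts trivially is zero: this is because $e_\chi$ then acts as the scalar $\frac{1}{|\Delta|}\sum_{\sigma \in \Delta}\chi^{-1}(\sigma)$, which vanishes by the orthogonality of characters. In particular $(\bb{Z}_p)_\chi = 0$ in both sequences, so the canonical surjections $\mrm{pr}(m,C)$ and $\mrm{pr}(m,\mca{O}^1)$ become isomorphisms after taking $\chi$-parts, yielding the two claimed identifications. I do not anticipate any real obstacle: all substantive content has been absorbed into Proposition \ref{projection}, and the hypothesis that $\chi$ be non-trivial enters only to eliminate the trivial $\bb{Z}_p$ appearing as the kernel in each sequence.
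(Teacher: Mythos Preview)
Your argument is correct and is exactly what the paper intends: it states only that the corollary ``immediately follows from Proposition \ref{projection}'' without further details, and your proof supplies precisely those details---exactness of the $\chi$-part functor (since $|\Delta|$ is prime to $p$) together with the vanishing of $(\bb{Z}_p)_\chi$ for non-trivial $\chi$.
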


\begin{prop}[\cite{CS} Theorem 4.7.6]\label{delta no moto}
For all $m \ge 0$, we have the canonical isomorphism of $\bb{Z}_p[\Gal(F_m/\bb{Q})]$-modules
$$X^{\Gamma_m}\simeq \overline{\mca{O}_{F_m}^1}/N_{\infty}(\overline{\mca{O}_{F_m}^1}).$$
\end{prop}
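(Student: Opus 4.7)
The plan is to derive the isomorphism from global class field theory combined with the structural information about $\overline{\mca{O}_\infty^1}$ recorded in Proposition~\ref{projection}~(2). Let $M_\infty$ be the maximal abelian pro-$p$ extension of $F_\infty$ unramified outside the unique prime $\mf{p}_\infty$ above $p$, and put $Y := \Gal(M_\infty/F_\infty)$. Because each $F_m$ has exactly one prime above $p$, global class field theory applied layer by layer and passed to the inverse limit should produce a $\Lambda$-equivariant four-term exact sequence
\begin{equation*}
0 \longrightarrow \overline{E_\infty} \longrightarrow \overline{\mca{O}_\infty^1} \longrightarrow Y \longrightarrow X \longrightarrow 0,
\end{equation*}
where $\overline{E_\infty}$ is the inverse limit of the $p$-adic closures of the diagonal images of the global principal unit groups inside the local principal units. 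This sequence will drive the whole comparison.

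Setting $Z := \overline{\mca{O}_\infty^1}/\overline{E_\infty}$, so that $0 \to Z \to Y \to X \to 0$ is exact, I would take $\Gamma_m$-cohomology. Since $\Gamma_m \cong \bb{Z}_p$ has cohomological dimension one, the resulting six-term exact sequence
\begin{equation*}
0 \to Z^{\Gamma_m} \to Y^{\Gamma_m} \to X^{\Gamma_m} \to Z_{\Gamma_m} \to Y_{\Gamma_m} \to X_{\Gamma_m} \to 0
\end{equation*}
can be matched against the analogous four-term exact sequence at the finite layer $m$, in which $X_{\Gamma_m}$ and $Y_{\Gamma_m}$ play the roles of $A_m$ (via Proposition~\ref{ideal class}) and $\Gal(M_m/F_m)$. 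The key additional input is Proposition~\ref{projection}~(2), which identifies the image of the canonical map $(\overline{\mca{O}_\infty^1})_{\Gamma_m} \to \overline{\mca{O}_{F_m}^1}$ with $N_\infty(\overline{\mca{O}_{F_m}^1})$, so that $\overline{\mca{O}_{F_m}^1}/N_\infty(\overline{\mca{O}_{F_m}^1})$ appears naturally as a cokernel in the comparison.

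The main obstacle will be showing that the snake-lemma diagram chase collapses into a clean isomorphism. Specifically, one needs to control $Y^{\Gamma_m}$ on the left-hand side, for which I would invoke Iwasawa's theorem that $Y$ carries no non-trivial finite $\Lambda$-submodule in the totally real cyclotomic setting; and one must carefully absorb the $\bb{Z}_p$-kernel of $\mrm{pr}(m,\mca{O}^1)$ from Proposition~\ref{projection}~(2)---which arises from the norm-compatible line of trivial units and carries trivial Galois action---into the closure $\overline{E_\infty}$, so that it does not produce spurious contributions to the cokernel. Putting these together should confirm that the induced map $X^{\Gamma_m} \to \overline{\mca{O}_{F_m}^1}/N_\infty(\overline{\mca{O}_{F_m}^1})$ is the asserted isomorphism of $\bb{Z}_p[\Gal(F_m/\bb{Q})]$-modules.
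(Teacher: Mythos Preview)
The paper does not give its own proof of this proposition; it simply cites \cite{CS} Theorem 4.7.6. Your outline follows the same class-field-theoretic route that Coates--Sujatha use, so strategically you are on the right track.

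There is, however, a genuine confusion in your four-term exact sequence. In the paper's notation, $\overline{\mca{O}_{F_m}^1}$ is the closure \emph{of the global principal units} $\mca{O}_{F_m}^1 \subset \mca{O}_{F_m}^\times$ inside the local field $F_{\mf{p}_m}$ (see the definitions just before Proposition~\ref{modules}, and the Remark after Corollary~\ref{delta} invoking Leopoldt). Hence $\overline{\mca{O}_\infty^1} = \plim \overline{\mca{O}_{F_m}^1}$ is already the object you are calling $\overline{E_\infty}$. As you have written it, the first two terms of
\[
0 \longrightarrow \overline{E_\infty} \longrightarrow \overline{\mca{O}_\infty^1} \longrightarrow Y \longrightarrow X \longrightarrow 0
\]
coincide, which would force $Y \cong X$ and collapse the whole argument. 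The module that belongs in the second slot is the inverse limit of the \emph{local} principal unit groups $U_\infty^1 := \plim U^1(F_{\mf{p}_m})$, a $\Lambda$-module not introduced in the paper. The correct sequence is
\[
0 \longrightarrow \overline{\mca{O}_\infty^1} \longrightarrow U_\infty^1 \longrightarrow Y \longrightarrow X \longrightarrow 0,
\]
and it is $Z := U_\infty^1/\overline{\mca{O}_\infty^1}$ whose $\Gamma_m$-cohomology you must compute. With this correction your plan is viable: Proposition~\ref{projection}~(2) controls $(\overline{\mca{O}_\infty^1})_{\Gamma_m}$, while the analogue for $U_\infty^1$ (namely $(U_\infty^1)_{\Gamma_m} \simeq U^1(F_{\mf{p}_m})$, which you would need to supply) then lets you identify $Z_{\Gamma_m}$ with $U^1(F_{\mf{p}_m})/N_\infty(\overline{\mca{O}_{F_m}^1})$, and the comparison with the finite-level sequence cuts this down to $\overline{\mca{O}_{F_m}^1}/N_\infty(\overline{\mca{O}_{F_m}^1})$. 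The remaining steps you flag---vanishing of $Y^{\Gamma_m}$ via the absence of finite $\Lambda$-submodules in $Y$, and tracking the trivial $\bb{Z}_p$-line---are indeed the substantive points, and you have not yet carried them out; but once the notational issue is fixed, the skeleton matches the argument in \cite{CS}.
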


\begin{cor}
\label{delta}
The $\Lambda$-module $\overline{\mca{O}_{F_m}^1}/N_{\infty}(\overline{\mca{O}_{F_m}^1})$ is annihilated by $\ann_{\Lambda}(X_{\mrm{fin}})$. 
\end{cor}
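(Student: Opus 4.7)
The plan is to deduce the corollary from Proposition \ref{delta no moto} by showing that $X^{\Gamma_m}$ is itself pseudo-null. Once this is established, $X^{\Gamma_m}$ is a pseudo-null $\Lambda$-submodule of $X$, hence contained in the maximal such submodule $X_{\mrm{fin}}$, and therefore annihilated by $\ann_{\Lambda}(X_{\mrm{fin}})$. Transporting this along the isomorphism $X^{\Gamma_m} \simeq \overline{\mca{O}_{F_m}^1}/N_{\infty}(\overline{\mca{O}_{F_m}^1})$ of Proposition \ref{delta no moto} will then give the stated annihilation.

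To prove that $X^{\Gamma_m}$ is finite, the key input is Proposition \ref{ideal class}, which identifies $X_{\Gamma_m}$ with the finite group $A_m$. Decomposing $X = \bigoplus_{\chi} X_{\chi}$ by characters of $\Delta$, it suffices to establish the general Iwasawa-theoretic fact: for a finitely generated torsion $\Lambda_\chi$-module $Y$, finiteness of $Y_{\Gamma_m}$ forces finiteness of $Y^{\Gamma_m}$. For this, I will choose a pseudo-isomorphism $\varphi \colon Y \to E := \bigoplus_i \Lambda_\chi/(f_i)$ from the structure theorem, with finite kernel and cokernel, and fix a topological generator $\gamma_m$ of $\Gamma_m$ together with $\nu_m := \gamma_m - 1 \in \Lambda_\chi$, so that $(-)^{\Gamma_m}$ and $(-)_{\Gamma_m}$ are computed as the kernel and cokernel of multiplication by $\nu_m$. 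Right exactness of $(-)_{\Gamma_m}$ combined with finiteness of $\ker\varphi$ and $\Coker\varphi$ propagates finiteness of $Y_{\Gamma_m}$ to finiteness of $E_{\Gamma_m} = \bigoplus_i \Lambda_\chi/(f_i,\nu_m)$; this forces each $f_i$ to be coprime to $\nu_m$ in the UFD $\Lambda_\chi$. Hence $\nu_m$ acts injectively on every $\Lambda_\chi/(f_i)$, giving $E^{\Gamma_m} = 0$. A short diagram chase in the $\Gamma_m$-invariants of the two short exact sequences induced by $\varphi$ then squeezes $Y^{\Gamma_m}$ into $\ker\varphi$, which is finite.

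The main obstacle is the bookkeeping in the pseudo-isomorphism diagram chase: one needs to propagate finiteness from $Y$ to $E$ (via coinvariants) and from $E$ back to $Y$ (via invariants) across maps that are neither injective nor surjective. Both propagations follow from the long exact sequences attached to $\ker\varphi$ and $\Coker\varphi$ being finite, but they are the only step requiring care; everything else, including the final reduction through Proposition \ref{delta no moto}, is formal.
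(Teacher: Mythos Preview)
Your proof is correct and follows the same strategy as the paper: show $X^{\Gamma_m}$ is pseudo-null (i.e.\ finite) using the finiteness of $X_{\Gamma_m}\simeq A_m$ from Proposition~\ref{ideal class}, deduce $X^{\Gamma_m}\subseteq X_{\mrm{fin}}$, and transport the annihilation along the isomorphism of Proposition~\ref{delta no moto}. The only difference is that the paper states the implication ``$X_{\Gamma_m}$ finite $\Rightarrow$ $X^{\Gamma_m}$ finite'' without justification, whereas you supply the standard structure-theorem argument for it.
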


\begin{proof}[Proof of Corollary \ref{delta}]
Since $X_{\Gamma_m}$ is isomorphic to the finite group $A_m$ by Proposition \ref{ideal class}, the $\Lambda$-module $X^{\Gamma_m}$ is pseudo-null. Then,  $X^{\Gamma_m}$ is contained in $X_{\mrm{fin}}$. Hence Corollary \ref{delta} follows from Proposition \ref{delta no moto}.
\end{proof}

\begin{rem}
By Leopoldt's conjecture for $F_m$ (cf.\ \cite{Wa} Corollary 5.32), we have the natural isomorphism $$\xymatrix{\mca{O}_{F_m}^1\otimes\bb{Z}_p \ar[r]^(0.56){\simeq}  & \overline{\mca{O}_{F_m}^1}.
}$$
Then, we have the following isomorphisms:
\begin{enumerate}
\item $\xymatrix{ \mca{O}_{F_m}^\times\otimes\bb{Z}_p & \mca{O}_{F_m}^1\otimes\bb{Z}_p \ar[l]_(0.46){\simeq} \ar[r]^(0.56){\simeq}  & \overline{\mca{O}_{F_m}^1} ;
}$
\item $\xymatrix{ N_{\infty}(\mca{O}_{F_m}^\times)\otimes\bb{Z}_p &N_{\infty}( \mca{O}_{F_m}^1)\otimes\bb{Z}_p \ar[l]_(0.46){\simeq} \ar[r]^(0.55){\simeq}  & N_{\infty}(\overline{\mca{O}_{F_m}^1} ) ;
}$
\item $\xymatrix{ C_{F_m}^\times\otimes\bb{Z}_p & C^1\otimes\bb{Z}_p \ar[l]_(0.46){\simeq} \ar[r]^(0.56){\simeq}  & \overline{C_{F_m}^1} .
}$
\end{enumerate}
\end{rem}

\section{Fitting ideals}
Here, we recall the notion of higher Fitting ideals. 
\begin{dfn}[Higher Fitting ideals, see \cite{No} \S 3.1]
\label{Fitt}
Let $R$ be an commutative ring, and $M$ be a finitely presented $R$-module. Let
$$\xymatrix{
R^m \ar[r]^{f} & R^n \ar[r] & M \ar[r] & 0 \\
}$$ 
be an exact sequence of $R$-modules. For each $i \ge 0$, we define {\em the $i$-th Fitting ideal} $\Fitt_{R,i}(M)$ as  follows. When $0 \le i < n$ and $m \ge n-i$, we define $\Fitt_{R,i}(M)$ to be the ideal of $R$ generated by all $(n-i) \times (n-i)$ minors of the matrix corresponding to $f$. When $0 \le i < n$ and $m < n-i$, we define $\Fitt_{R,i}(M):=0$. When $i \ge n$, we define $\Fitt_{R,i}(M):=R$. The definition of these ideals depends only on $M$, and does not depend on the choice of the above exact sequence.
\end{dfn}

For a finitely presented $R$-module $M$, we have the following sequence of ideals of $R$:
$$\Fitt_{R,0}(M)\subseteq \Fitt_{R,1}(M)\subseteq \dots \subseteq \Fitt_{R,n}(M)= \Fitt_{R,n+1}(M)=\dots=R.$$

We denote the smallest number of generators of an $R$-module $M$ by $\nu_R(M)$. If $\Fitt_{R,n}(M) \neq R$, then $\nu_R(M) \ge n+1$. 
Note that when $R$ is a local ring or PID, we have  $\nu_R(M)=i+1$ if and only if $\Fitt_{R,i}(M)\neq R$ and $\Fitt_{R,i+1}(M)=R$.


\begin{exa}\label{rei-Iwasawa}
Let $R= \bb{Z}_p[[T]]$ and $M$ a finitely generated torsion $R$-module. 
Assume $$M\sim \bigoplus_{i=1}^nR/f_iR $$
and $f_i$ divides $f_{i+1}$ for $1 \le i \le n-1$. 
Then, for each $i$ with $i \ge 0$, there exists an ideal $I_i$ with finite index in $R$ such that 
$$\Fitt_{R,i}(M)=
\begin{cases}
(\prod_{k=1}^{n-i}f_k\big)I_i & (\text{if} \ i<n) \\
I_i & (\text{if} \ i \ge n)
\end{cases}
$$ (cf.\ \cite{Ku} Lemma 8.2). This implies that the family $\{\Fitt_{R,i}(M) \}_{i \ge 0}$ of Fitting ideals of $M$ determines the pseudo-isomorphism class of $M$. 
\end{exa}

We need the following lemma in the proof of Theorem \ref{Main theorem}.

\begin{lem}[for example, see \cite{Ku} Theorem 8.1]\label{no pn}
Let $R= \bb{Z}_p[[T]]$ and $M$ a finitely generated torsion $R$-module.
Suppose $M$ contains no non-trivial pseudo-null $R$-submodule. 
Then, there exists an exact sequence 
$$\xymatrix{0 \ar[r] & R^n \ar[r] & R^n \ar[r] & M \ar[r] & 0} $$
for some integer $n>0$, and we have 
$$\Fitt_{R,0}(M)=\cha_R(M).$$
\end{lem}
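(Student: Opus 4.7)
The strategy is to exploit the fact that $R=\bb{Z}_p[[T]]$ is a regular local ring of Krull dimension $2$ to upgrade the ``no non-trivial pseudo-null submodule'' hypothesis into the homological statement $\mrm{pd}_R(M)=1$. Once we have a length-one resolution by free modules of the same rank, the zeroth Fitting ideal is generated by the determinant of the presenting matrix, and the equality with $\cha_R(M)$ will follow by comparing valuations at each height-one prime of $R$. I expect the homological input to be the main obstacle; the remainder is standard localization bookkeeping.

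For the square presentation, I would first show $\mrm{depth}_R(M)=1$. Since $M$ is torsion over the domain $R$, we have $\ann_R(M)\neq 0$, so $\dim R/\ann_R(M)\le 1$, and hence $\mrm{depth}_R(M)\le 1$. Conversely, $\mrm{depth}_R(M)=0$ would force the maximal ideal $\mf{m}$ of $R$ to be an associated prime of $M$, yielding an injection $R/\mf{m}\hookrightarrow M$; since $R/\mf{m}$ is a non-zero finite $R$-module, i.e.\ a non-zero pseudo-null submodule, this contradicts the hypothesis. Because $\mrm{depth}_R(R)=2$, the Auslander--Buchsbaum formula yields $\mrm{pd}_R(M)=1$, so there is an exact sequence $0\to R^a\to R^b\to M\to 0$. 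Tensoring with the fraction field $K$ of $R$ annihilates $M$, forcing $a=b=:n$.

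For the Fitting ideal computation, from $0\to R^n\xrightarrow{f}R^n\to M\to 0$ and Definition \ref{Fitt}, we obtain $\Fitt_{R,0}(M)=(\det f)$. By the structure theorem, $M\sim \bigoplus_{i=1}^n R/f_iR$ with $f_i\mid f_{i+1}$, so $\cha_R(M)=(\prod_i f_i)$. To see $(\det f)=(\prod_i f_i)$, I would localize at each height-one prime $\mf{p}$ of $R$: the pseudo-isomorphism $M\sim \bigoplus R/f_iR$ localizes to an isomorphism $M_\mf{p}\simeq \bigoplus R_\mf{p}/f_iR_\mf{p}$ (since pseudo-null modules vanish away from $\mf{m}$), giving $\mrm{length}_{R_\mf{p}}(M_\mf{p})=\sum_i v_\mf{p}(f_i)=v_\mf{p}(\prod_i f_i)$; meanwhile, the localized short exact sequence $0\to R_\mf{p}^n\to R_\mf{p}^n\to M_\mf{p}\to 0$ over the discrete valuation ring $R_\mf{p}$ gives $\mrm{length}_{R_\mf{p}}(M_\mf{p})=v_\mf{p}(\det f)$. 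Since $R$ is a UFD, two principal ideals whose generators agree in valuation at every height-one prime coincide, so $(\det f)=(\prod_i f_i)=\cha_R(M)$.
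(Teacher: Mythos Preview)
The paper does not supply its own proof of this lemma: it is stated with the attribution ``for example, see \cite{Ku} Theorem 8.1'' and no argument is given. So there is nothing to compare against directly.

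Your proposal is correct and is in fact the standard way to prove this. The two steps are clean: (i) the Auslander--Buchsbaum argument (torsion gives $\mrm{depth}\le 1$; a depth-zero module over a local ring has $\mf{m}$ as an associated prime, producing a copy of $R/\mf{m}\hookrightarrow M$, which is finite hence pseudo-null; so $\mrm{depth}_R(M)=1$ and $\mrm{pd}_R(M)=2-1=1$) yields the square free resolution, and (ii) localizing at each height-one prime $\mf{p}$ turns the problem into a length computation over the DVR $R_\mf{p}$, where both $(\det f)$ and $\cha_R(M)$ have the same $\mf{p}$-adic valuation, and the UFD property of $R$ lets you conclude globally. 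One tiny caveat: the case $M=0$ should be handled separately (Auslander--Buchsbaum is only for nonzero modules), but that case is trivial.
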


\section{The Euler system of cyclotomic units}

We first recall some basic results on the Euler system of cyclotomic units in \S\S \ref{saisho}-\ref{2-maps}. 
Then, in \S \ref{the subsection of x}, we define Kurihara's elements $x_{\nu,q} \in (F_m^{\times}/p^N)_{\chi}$, which play a key role in the proof of Theorem \ref{Main theorem} (cf.\ Definition \ref{x}). 
\subsection{}\label{saisho}
We fix a primitive $p^{m+1}$-st root of unity $\rho_m \in \bb{Q}(\mu_{p^{m+1}})$ for each $m \ge 0$ such that $\rho_{m+1}^p=\rho_m$, and a topological generator $e \in \bb{Z}_{>0}$ of $\bb{Z}_p^{\times}$. We have the following lemma.
\begin{lem}[\cite{Wa} Lemma 8.1]
\label{gene.cyc}
We define the element  $\mrm{cyc}(\rho_m)$ of $C_{F_m}$ by 
$$\mrm{cyc}(\rho_m):= \frac{\rho_m^{-e/2}-\rho_m^{e/2}}{\rho_m^{-1/2}-\rho_m^{1/2}}.$$
Then, the $\bb{Z}[\Gal(F_m/\bb{Q})]$-module $C_{F_m}$ is generated by $\pm \mrm{cyc}(\rho_m)$.
\end{lem}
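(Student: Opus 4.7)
The strategy is to show that every standard generator of $C_{F_m}$ lies in the $\bb{Z}[\Gal(F_m/\bb{Q})]$-submodule generated by $\pm\mrm{cyc}(\rho_m)$, using the Galois action together with the fact that $e$ generates $(\bb{Z}/p^{m+1})^\times$.

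First, I would recall the standard description of $C_{F_m}$ (e.g.\ from \cite{Wa} \S 8.1 or \cite{CS} \S 4.3): it is generated, as an abelian group, by $-1$ together with the real cyclotomic units
$$\eta_a := \frac{\rho_m^{-a/2}-\rho_m^{a/2}}{\rho_m^{-1/2}-\rho_m^{1/2}}, \qquad \gcd(a,p)=1,$$
where $\rho_m^{1/2}$ denotes a fixed square root of $\rho_m$ inside $\bb{Q}(\mu_{p^{m+1}})=\bb{Q}(\mu_{2p^{m+1}})$ (possible since $p$ is odd). The value of $\eta_a$ does not depend on the choice of this square root, since flipping its sign negates numerator and denominator simultaneously, and the straightforward relations $\eta_{-a}=-\eta_a$ and $\eta_{a+p^{m+1}}=-\eta_a$ hold. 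By construction $\mrm{cyc}(\rho_m)=\eta_e$.

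Next, I would establish the key cocycle identity
$$\eta_{ab} \;=\; \eta_a \cdot \sigma_a(\eta_b),$$
where $\sigma_a\in\Gal(\bb{Q}(\mu_{p^{m+1}})/\bb{Q})$ sends $\rho_m$ to $\rho_m^a$; this follows directly from the definition once a convention for $\rho_m^{1/2}$ is fixed. Iterating with $b=e$ and noting $\eta_1=1$, I obtain
$$\eta_{e^k} \;=\; \prod_{j=0}^{k-1}\sigma_e^{j}\bigl(\mrm{cyc}(\rho_m)\bigr)$$
for every $k\ge 0$, so that $\eta_{e^k}$ lies in the $\bb{Z}[\Gal(F_m/\bb{Q})]$-submodule $M$ generated by $\mrm{cyc}(\rho_m)$.

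To finish, I would use that $e$ is a topological generator of $\bb{Z}_p^\times$, hence reduces to a generator of the cyclic group $(\bb{Z}/p^{m+1})^\times$. Thus any integer $a$ coprime to $p$ satisfies $a\equiv \pm e^k\pmod{p^{m+1}}$ for some $k\ge 0$, and the sign relations above give $\eta_a=\pm \eta_{e^k}$, which already lies in the $\bb{Z}[\Gal(F_m/\bb{Q})]$-module generated by $\pm\mrm{cyc}(\rho_m)$. The remaining generator $-1$ is realised as the quotient $\mrm{cyc}(\rho_m)^{-1}\cdot\bigl(-\mrm{cyc}(\rho_m)\bigr)$ of two generators, and so also lies in this module. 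The main technical annoyance will be the bookkeeping of signs arising from the half-integer exponents $\rho_m^{a/2}$: one must verify the cocycle identity and the sign relations without spurious errors, which is essentially the content of \cite{Wa} Lemma 8.1 that I would quote at this stage rather than redo by hand.
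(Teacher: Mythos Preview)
Your argument is correct and is exactly the standard proof: the paper does not give its own proof of this lemma but simply cites \cite{Wa} Lemma 8.1, and what you have written is precisely the argument found there (the cocycle relation $\eta_{ab}=\eta_a\cdot\sigma_a(\eta_b)$ together with the fact that $e$ is a primitive root modulo $p^{m+1}$). One minor remark: since $e$ topologically generates $\bb{Z}_p^\times$, it already generates all of $(\bb{Z}/p^{m+1})^\times$, so every $a$ prime to $p$ is congruent to some $e^k$ modulo $p^{m+1}$ without needing the $\pm$; the sign only enters because $\eta_a$ depends on $a$ modulo $2p^{m+1}$ rather than $p^{m+1}$.
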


\begin{cor}[\cite{CS} Lemma 4.3.4 ]
\label{gene.cyc.lim}
The $\Lambda$-module $\overline{C_{\infty}^1}$ is generated by $\big( u \cdot \mrm{cyc}(\rho_m) \big)_{m \ge 0}$, where $u$ is a $(p-1)$-st root of unity in $\bb{Q}_p$ such that $eu \equiv 1 \pmod{p}$. 
\end{cor}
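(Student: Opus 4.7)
The plan is to check two things: (i) the sequence $\mathbf{c}:=(u\cdot\mrm{cyc}(\rho_m))_m$ actually belongs to $\overline{C_\infty^1}$, and (ii) it generates $\overline{C_\infty^1}$ as a $\Lambda$-module.

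For (i), I would first compute $\mrm{cyc}(\rho_m)\equiv e\pmod{\mf{p}_m}$ from $\mrm{cyc}(\rho_m)=\rho_m^{(1-e)/2}(1+\rho_m+\cdots+\rho_m^{e-1})$ using that $\rho_m\equiv 1\pmod{\mf{p}_m}$; since $eu\equiv 1\pmod p$, the product $u\cdot\mrm{cyc}(\rho_m)$ is a principal unit at $\mf{p}_m$. To see that it actually lies in the closure $\overline{C_{F_m}^1}$, and not merely in the local principal units, I would observe that the Teichm\"uller element $u=\omega(e)^{-1}$ is itself a $p$-adic limit of integer powers of $\mrm{cyc}(\rho_m)$: by the Chinese remainder theorem, choose $k_n\in\bb{Z}$ with $k_n\equiv -1\pmod{p-1}$ and $k_n\to 0$ in $\bb{Z}_p$; then $\mrm{cyc}(\rho_m)^{k_n}\to u$ in $F_{\mf{p}_m}^\times$, so $u\in\overline{C_{F_m}}$ and hence $u\cdot\mrm{cyc}(\rho_m)\in\overline{C_{F_m}^1}$. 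Norm compatibility follows from the standard Euler relation $N_{F_{m+1}/F_m}(\mrm{cyc}(\rho_{m+1}))=\mrm{cyc}(\rho_m)$ together with $N_{F_{m+1}/F_m}(u)=u^p=u$, the latter because $u\in\mu_{p-1}$ and $[F_{m+1}:F_m]=p$.

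For (ii), Proposition \ref{modules} (3) gives a decomposition $\overline{C_\infty^1}=\bigoplus_{\chi\in\widehat\Delta}(\overline{C_\infty^1})_\chi$ with each summand free of rank one over the local ring $\Lambda_\chi\cong\bb{Z}_p[[T]]$. By Nakayama inside $\Lambda_\chi$, it suffices to show, for every $\chi$, that the image of $\mathbf{c}_\chi$ in $(\overline{C_\infty^1})_\chi/(p,\gamma_0-1)\cong\bb{F}_p$ is non-zero. For non-trivial $\chi$, Corollary \ref{coinv} (1) identifies $(\overline{C_\infty^1})_{\Gamma_0,\chi}$ with $(\overline{C_{F_0}^1})_\chi$; combined with Lemma \ref{gene.cyc} and the Teichm\"uller splitting of the local unit group at $\mf{p}_0$, the pro-$p$ projection $u\cdot\mrm{cyc}(\rho_0)$ of $\mrm{cyc}(\rho_0)$ is a $\bb{Z}_p[\Delta]$-generator of $\overline{C_{F_0}^1}$ (the $\pm 1$ dies after $\otimes\bb{Z}_p$ since $p$ is odd). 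Its $\chi$-component is therefore a $\bb{Z}_p$-generator of the rank-one module $(\overline{C_{F_0}^1})_\chi$ and in particular non-zero modulo $p$.

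The main obstacle is the trivial character case $\chi=1$: here $(\overline{C_{F_0}^1})_1=0$, so the level-zero argument gives no information, and the kernel $\bb{Z}_p$ appearing in Proposition \ref{projection} (1) must be examined. I would handle this by passing to a layer $m\ge 1$, where $(\overline{C_{F_m}^1})_1=\overline{C_{\bb{Q}_m}^1}$ is non-trivial, and by computing the idempotent projection $e_1(u\cdot\mrm{cyc}(\rho_m))=N_{F_m/\bb{Q}_m}(u\cdot\mrm{cyc}(\rho_m))^{1/|\Delta|}$; verifying that this element is a $\bb{Z}_p[\Gamma_0/\Gamma_m]$-generator of $\overline{C_{\bb{Q}_m}^1}$ yields the required non-vanishing of $\mathbf{c}_1$ modulo $(p,\gamma_0-1)$. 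This step, which relies on the detailed structure of cyclotomic units in the cyclotomic $\bb{Z}_p$-extension of $\bb{Q}$, is the heart of the proof of \cite{CS} Lemma 4.3.4.
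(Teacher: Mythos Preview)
The paper does not prove this corollary; it simply attributes the statement to \cite{CS} Lemma 4.3.4 and moves on. So there is no ``paper's proof'' to compare against.

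Your sketch is a reasonable outline of how such a proof goes, and the ingredients you invoke (the residue computation $\mrm{cyc}(\rho_m)\equiv e\pmod{\mf{p}_m}$, the Teichm\"uller limit $\mrm{cyc}(\rho_m)^{k_n}\to u$, norm compatibility from Lemma \ref{norm} (2), and Nakayama applied component-wise via Corollary \ref{coinv}) are all correct. One caveat: you lean on Proposition \ref{modules} (3) to know in advance that $\overline{C_\infty^1}$ is $\Lambda$-free of rank one, but in \cite{CS} that freeness is established precisely by exhibiting the generator $(u\cdot\mrm{cyc}(\rho_m))_m$; so within \cite{CS} the logical order is the reverse of yours, and your argument would be circular there. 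Inside the present paper, where both Proposition \ref{modules} and Proposition \ref{projection} are imported as black boxes, the circularity disappears and your reduction is legitimate---though it does mean you are essentially repackaging \cite{CS} rather than giving an independent proof. Your candid deferral of the $\chi=1$ case back to \cite{CS} confirms this.
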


For an integer $N \ge 1$, we define 
\begin{eqnarray*}
\mca{S}_N &:=& \big\{\ell \ | \ \ell \ \text{is a prime number not dividing} \ e , \ \text{and} \ \ell \equiv 1 \pmod{p^N}\big\}, \\
\mca{N}_N &:=& \big\{\prod_{i=1}^r \ell_i \ | \ r> 0, \ \ell_i \in \mca{S}_N \ (i=1,\dots,r), \ \text{and} \ \ell_i\not= \ell_j \ \text{if} \ i\not= j\big\} \cup\{1\}, \\
\end{eqnarray*}
and for any algebraic number field $K$, we define  
\begin{eqnarray*}
\mca{S}_N(K) &:=& \{\ell \in S_N | \ \ell \ \text{splits completely in} \ K/\bb{Q} \}, \\
\mca{N}_N(K) &:=& \big\{\prod_{i=1}^r \ell_i \ | \ r> 0, \ \ell_i \in \mca{S}_N(K) \ (i=1,\dots,r), \  \text{and} \ \ell_i\not= \ell_j \ \text{if} \ i\not= j\big\}\cup\{1\}. \\
\end{eqnarray*}
For $n=\prod_{i=1}^r \ell_i\in\mca{N}_N \ (\ell_i\in\mca{S}_N \ \text{for} \ i=1,\dots,r)$, we define $\epsilon(n):=r$.

\begin{dfn}
For $n\in\mca{N}_N$ and $\zeta\in\mu_{p^{m+1}n}\backslash \{ 1 \}$, we define  $$\mathrm{cyc}(\zeta):=\frac{\zeta^{-e/2}-\zeta^{e/2}}{\zeta^{-1/2}-\zeta^{1/2}} \in F_m(n),$$
where $F_m(n)$ denotes the maximal totally real subfield of $\bb{Q}(\mu_{p^{m+1}n})$. 
\end{dfn}

We obtain the following lemma immediately.
\begin{lem}
\label{norm}
Let $n \in \mca{N}_N$.
\begin{enumerate}
\item Let $\ell \in \mca{S}_N$, and assume $\ell$ does not divide $n$. Let $\zeta_\ell \in \mu_{\ell}$ be a primitive $\ell$-th root of unity, and $\xi \in \mu_{p^{m+1}n}\backslash \{ 1 \}$. Then $$N_{F_m(n\ell)/F_m(n)} \big{(} \mathrm{cyc}(\zeta_\ell\xi) \big{)} =\frac{\mathrm{cyc}(\xi^\ell)}{\mathrm{cyc}(\xi)}.$$
\item Let $\xi \in \mu_{n}$. Then 
$$N_{F_{m+1}(n)/F_m(n)}\big( \mathrm{cyc}(\rho_{m+1} \xi) \big) =\mathrm{cyc}(\rho_m \xi^p).$$
\end{enumerate}
\end{lem}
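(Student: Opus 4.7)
The lemma is a standard norm-compatibility (distribution) relation for cyclotomic units; I would argue as follows. The common strategy for both parts is to lift the norm from the totally real subfields up to a norm in the full cyclotomic field, where the Galois action is completely explicit, and then apply the polynomial identity $\prod_{\eta\in\mu_r}(X-\eta\beta)=X^r-\beta^r$ to collapse the resulting products.

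For part (1), set $K:=\bb{Q}(\mu_{p^{m+1}n\ell})$ and $K':=\bb{Q}(\mu_{p^{m+1}n})$. Since $\mrm{cyc}(\zeta^{-1})=\mrm{cyc}(\zeta)$ directly from the definition, $\mrm{cyc}(\zeta_\ell\xi)$ is fixed by complex conjugation and lies in $F_m(n\ell)$. A brief degree count together with $F_m(n\ell)\cap K'=F_m(n)$ (the intersection is totally real inside $K'$) gives $F_m(n\ell)\cdot K'=K$, so the restriction $\Gal(K/K')\xrightarrow{\sim}\Gal(F_m(n\ell)/F_m(n))$ is an isomorphism (equivalently, every $\sigma\in\Gal(K/K')$ commutes with complex conjugation and so preserves $F_m(n\ell)$); in particular $N_{F_m(n\ell)/F_m(n)}(\alpha)=N_{K/K'}(\alpha)$ for $\alpha\in F_m(n\ell)$. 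I would then write $\mrm{cyc}(\zeta)=\zeta^{(1-e)/2}(1-\zeta^e)/(1-\zeta)$ under a compatible choice of square roots, use the identification $\Gal(K/K')\simeq(\bb{Z}/\ell)^\times$ via $\sigma_a(\zeta_\ell)=\zeta_\ell^a$, and apply $\prod_{\eta\in\mu_\ell\setminus\{1\}}(1-\eta\beta)=(1-\beta^\ell)/(1-\beta)$ to each of the products $\prod_a(1-\zeta_\ell^{ae}\xi^e)$ (reindexing $a\mapsto ae$ is legitimate because $\gcd(e,\ell)=1$) and $\prod_a(1-\zeta_\ell^a\xi)$. The resulting ratio $(1-\xi^{e\ell})(1-\xi)/[(1-\xi^\ell)(1-\xi^e)]$ matches the ratio part of $\mrm{cyc}(\xi^\ell)/\mrm{cyc}(\xi)$.

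Part (2) proceeds in parallel with $K:=\bb{Q}(\mu_{p^{m+2}n})$ and $K':=\bb{Q}(\mu_{p^{m+1}n})$. Here $\Gal(K/K')$ is cyclic of order $p$, generated by an automorphism $\rho_{m+1}\mapsto\rho_{m+1}^{1+p^{m+1}}$, and the orbit of $\rho_{m+1}$ is $\{\rho_{m+1}^{1+ap^{m+1}}\colon a=0,\ldots,p-1\}=\rho_{m+1}\cdot\mu_p$. Consequently $\prod_\sigma(1-\sigma(\rho_{m+1}\xi))=\prod_{\eta\in\mu_p}(1-\eta\rho_{m+1}\xi)=1-(\rho_{m+1}\xi)^p=1-\rho_m\xi^p$, and analogously for the $e$-th power factor. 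Assembling yields $\mrm{cyc}(\rho_m\xi^p)$ up to the analogous unit factor.

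The one genuine subtlety is tracking the half-power $\zeta^{(1-e)/2}$ in each formula, which requires a consistent convention for square roots and contributes a root-of-unity factor to each norm. To bypass this cleanly, I would verify each identity first after squaring, using the manifestly rational expression $\mrm{cyc}(\zeta)^2=(1-\zeta^e)(1-\zeta^{-e})/[(1-\zeta)(1-\zeta^{-1})]$ (in which no half-powers appear and the above norm computations go through symmetrically in $\zeta^{\pm 1}$). Both sides of each claimed equality are real (fixed by complex conjugation) with matching positive squares, so the sign is then pinned down by evaluation at a single real embedding, reducing to an elementary positivity check for ratios of sines.
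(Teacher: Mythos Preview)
Your argument is correct and is the standard proof of these distribution relations. Note, however, that the paper does not actually supply a proof of this lemma: the sentence immediately preceding the statement is ``We obtain the following lemma immediately,'' and no argument is given. So there is nothing to compare against beyond the implicit claim that the result is routine.

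What you have written is precisely the routine computation the paper is taking for granted: lift the norm from the totally real subfield to the full cyclotomic field via the isomorphism $\Gal(K/K')\xrightarrow{\sim}\Gal(F_m(n\ell)/F_m(n))$ (respectively $\Gal(F_{m+1}(n)/F_m(n))$), use the explicit description of the Galois action on roots of unity, and collapse the products with the identity $\prod_{\eta\in\mu_r}(1-\eta\beta)=1-\beta^r$. Your handling of the half-power ambiguity by first verifying the squared identity and then pinning down the global sign via a single real embedding is a clean way to finish; since both sides lie in a totally real field and have equal squares there, their ratio is $\pm 1$ in that field, so one embedding suffices. One small remark: the positivity check at the end need not literally yield positive values of the individual $\mathrm{cyc}(\zeta)=\sin(e\pi\theta)/\sin(\pi\theta)$, but only that the sign of the product on the left matches the sign of the quotient on the right at the chosen embedding; this is still an elementary (if slightly tedious) verification, and your phrasing already accommodates that.
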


\subsection{}From now on, we assume $N\ge m+1$.
Let $n\in\mca{N}_N$. In this subsection, we shall define an element $\kappa(\xi) \in F_m^\times/p^N$ called Kolyvagin derivative for any primitive  $\xi\in\mu_n$. (In fact, we will define more general one. See Definition \ref{kappa}).

For an integer $n$ prime to $p$, we write $n=\prod_{i=1}^r \ell_i^{e_i}$ such that $\ell_1,\dots,\ell_r$ are distinct prime numbers and $e_i>0$ for each $i$. We define $F_m(n)$ to be  the maximal totally real subfield of $\bb{Q}(\mu_{p^{m+1}n})$, and $H_{F_m,n}:=\Gal\big( F_m(n)/F_m \big)$. Then, for any $m \ge 0$, we have canonical isomorphisms
\begin{align*}
H_{F_m,n} &= \Gal\big(F_m(n)/F_m \big)\simeq \Gal\big( \bb{Q}(\mu_{p^{m+1}n})/\bb{Q}(\mu_{p^{m+1}})\big) \\
 & \simeq  \Gal\big(\bb{Q}(\mu_n)/\bb{Q}\big)  \\
 & \simeq  \Gal \big(\bb{Q}(\mu_{\ell_1}^{e_1} )/\bb{Q}\big)\times \dots \times \Gal \big(\bb{Q}(\mu_{\ell_r}^{e_r})/\bb{Q} \big) \\
 & \simeq  H_{\ell_1^{e_1}}\times \dots \times H_{\ell_r^{e_r}}. 
\end{align*}
For all $m$ with $m\ge 0$, we identify $H_{F_m,n}$ with $H_{F_0,n}$ by canonical isomorphisms, and put $H_n:=H_{F_0,n}$. 

Recall $H_\ell$ is a cyclic group of order $\ell-1$ if $\ell$ is a prime number. We shall take a generator $\sigma_{\ell}$ of $H_\ell$ for each prime number $\ell \in \mca{S}_N$ as follows. Let $\ell\in \mca{S}_N$. We put $N_{\{\ell\} }:=\mrm{ord}_p(\ell-1)$, where $\mrm{ord}_p$ is the normalized additive valuation of $\ell$, namely, $\mrm{ord}_p(p)=1$. Then, we have $N_{\{\ell\} } \ge N \ge 1$.
By the fixed embedding $\ell_{\overline{\bb{Q}}}\colon\overline{\bb{Q}}\hookrightarrow \overline{\bb{Q}}_\ell$, we regard $\mu_{p^{N_{\{\ell\}}}}$ as a subset of $\bb{Q}_\ell$. We identify $\Gal \big(\bb{Q}_\ell(\mu_\ell)/\bb{Q}_\ell \big)$ with $H_\ell=\Gal \big(\bb{Q}(\mu_\ell)/\bb{Q} \big)$ by the canonical isomorphism. Let $K$ be the maximal $p$-extension field of $F_m$ contained in $F_m(\ell)$, and $\pi$ the prime element of $K_{\ell_K}$. We fix a generator $\sigma_\ell$ of $H_\ell$ such that $$\pi^{\sigma_\ell-1}\equiv \rho_{N_{\{\ell \}}-1} \pmod{\overline{\ell_K}},$$ where $\overline{\ell_K}$ is the maximal ideal of $K_{\ell_K}$, and $\rho_{N_{\{\ell\} }-1}$ is a primitive $p^{N_{\{\ell\}} }$-th root of unity defined as above. Note that the definition of $\sigma_\ell$ does not depend on the choice of $\pi$.

Let $n\in\mca{N}_N$. We define the following elements of the group ring $\bb{Z}[H_n]$.
\begin{dfn}
Let $n=\prod_{i=1}^r \ell_i \in\mca{N}_N$ such that $\ell_i\in\mca{S}_N$ for $i=1,\dots,r$. We define
$$D_{\ell_i}:=\sum_{k=1}^{\ell_i-2}k\sigma_{\ell_i}^k \in\bb{Z}[H_{\ell_i}] \subseteq \bb{Z}[H_{n}]$$ 
for $i=1,\dots,r$, and 
$$D_{n}:=\prod_{i=1}^{r}D_{\ell_i} \in \bb{Z}[H_{n}].$$
\end{dfn}

In order to define $\kappa(\xi)$, we need the following two well-known Lemmas.
\begin{lem}
\label{into}
Let $n_1,n_2 \in \mca{N}_N$ satisfying $(n_1,n_2)=1$.  
We put $n=n_1 n_2$. Then, the canonical map
$\xymatrix{
F_m(n_1)^{\times}/p^N \ar[r] & \big(F_m(n)^{\times}/p^N\big)^{H_{n_2}} \\
}$
is isomorphism.
\end{lem}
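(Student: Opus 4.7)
The plan is to derive this from Hilbert's Theorem 90 applied to the Galois extension $F_m(n)/F_m(n_1)$, whose Galois group is $H_{n_2}$ (since $H_n \simeq H_{n_1} \times H_{n_2}$ via the canonical isomorphisms, and $F_m(n_1)$ is the fixed field of $H_{n_2}$ in $F_m(n)$).

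First I would observe that, because $p$ is odd and $F_m(n)$ is totally real, the only roots of unity in $F_m(n)$ are $\pm 1$. In particular $F_m(n)^\times[p^N] = 1$, so raising to the $p^N$-th power gives a short exact sequence of $H_{n_2}$-modules
\begin{equation*}
1 \longrightarrow F_m(n)^\times \xrightarrow{\ p^N\ } F_m(n)^\times \longrightarrow F_m(n)^\times/p^N \longrightarrow 1.
\end{equation*}
Taking $H_{n_2}$-cohomology, the associated long exact sequence reads
\begin{equation*}
1 \to F_m(n_1)^\times \xrightarrow{p^N} F_m(n_1)^\times \to \bigl(F_m(n)^\times/p^N\bigr)^{H_{n_2}} \to H^1(H_{n_2}, F_m(n)^\times),
\end{equation*}
using that $(F_m(n)^\times)^{H_{n_2}} = F_m(n_1)^\times$.

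The key step is then the vanishing $H^1(H_{n_2}, F_m(n)^\times) = 0$, which is Hilbert's Theorem 90 in its general form for an arbitrary finite Galois extension. (Note that $H_{n_2}$ itself need not be cyclic, and its order is in fact divisible by $p$ under our assumption $\ell_i \equiv 1 \pmod{p^N}$, so we really do need the general version rather than the cyclic one.) Substituting this vanishing into the exact sequence immediately yields
\begin{equation*}
F_m(n_1)^\times/p^N \xrightarrow{\ \sim\ } \bigl(F_m(n)^\times/p^N\bigr)^{H_{n_2}},
\end{equation*}
and by construction this identification is the canonical map in the statement.

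I do not expect a serious obstacle here: once the absence of $p$-power roots of unity in the totally real field $F_m(n)$ is noted, the proof is a direct application of Hilbert 90. The one place to be careful is to invoke the general (not merely cyclic) form of Hilbert 90, since $|H_{n_2}|$ is divisible by $p$ and the group is typically non-cyclic.
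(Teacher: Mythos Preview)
Your argument is correct and is precisely the standard proof: the paper itself does not supply a proof of this lemma, merely labelling it ``well-known'' and stating it without proof. Your use of the short exact sequence coming from the absence of $p$-power roots of unity in the totally real field $F_m(n)$, followed by Hilbert~90 for the (not necessarily cyclic) Galois group $H_{n_2}$, is exactly the expected justification.
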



\begin{lem}
\label{fix}
Let $n_1,n_2 \in \mca{N}_N$. Assume $\ell \in \mca{S}_N\big( F_m(n_1) \big)$ for each prime divisor $\ell$ of $n_2$. 
Namely, $\ell \equiv 1 \pmod{p^N n_1}$ for each prime divisor $\ell$ of $n_2$. 
We put $n=n_1 n_2$. Let $\xi_{n_1} \in\mu_{n_1}$ be a primitive $n_1$-st root of unity, and $\xi_{n_2} \in\mu_{n_2}$ a primitive $n_2$-nd root of unity. 
Then, the image of $\mathrm{cyc}(\rho_m\xi_{n_1}\xi_{n_2})^{D_{n_2}}$ in $F_m(n)^{\times}/p^N$ is fixed by $H_{n_2}=\Gal\big( F_m(n)/F_m(n_1) \big)$. 
\end{lem}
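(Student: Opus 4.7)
The plan is to argue by induction on $\epsilon(n_2)$, the number of distinct prime divisors of $n_2$, via the classical Kolyvagin-style telescoping identity. The base case $n_2=1$ is trivial since $D_1=1$ and $H_1$ is the trivial group. For the inductive step, since $H_{n_2}=\prod_{\ell\mid n_2}H_\ell$ is generated by the $\sigma_\ell$, it suffices to check for each prime $\ell\mid n_2$ that $\alpha^{(\sigma_\ell-1)D_{n_2}}\in(F_m(n)^\times)^{p^N}$, where $\alpha:=\mrm{cyc}(\rho_m\xi_{n_1}\xi_{n_2})$.

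Fix such an $\ell$ and write $n_2=\ell\cdot n_2'$, so that $D_{n_2}=D_\ell D_{n_2'}$. A straightforward telescoping computation in $\bb{Z}[H_\ell]$ using $\sigma_\ell^{\ell-1}=1$ yields the key identity
$$(\sigma_\ell-1)D_\ell=(\ell-1)-N_\ell,$$
where $N_\ell:=\sum_{k=0}^{\ell-2}\sigma_\ell^k$ is the norm element. Hence
$$\alpha^{(\sigma_\ell-1)D_{n_2}}=\alpha^{(\ell-1)D_{n_2'}}\cdot\alpha^{-N_\ell D_{n_2'}}.$$
The first factor is a $p^N$-th power, since $\ell\in\mca{S}_N$ implies $p^N\mid\ell-1$. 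For the second, noting that $N_\ell$ acts on $\alpha$ as the norm $N_{F_m(n)/F_m(n_1 n_2')}$, Lemma \ref{norm}(1) yields
$$\alpha^{N_\ell}=\mrm{cyc}(\eta^\ell)/\mrm{cyc}(\eta),\qquad\eta:=\rho_m\xi_{n_1}\xi_{n_2'}.$$

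By hypothesis $\ell\equiv 1\pmod{p^N n_1}$, and since $N\ge m+1$ this forces $\ell\equiv 1\pmod{p^{m+1}n_1}$. Therefore raising to the $\ell$-th power fixes both $\rho_m$ and $\xi_{n_1}$ while permuting the primitive $n_2'$-th roots of unity, so there exists $\tau\in H_{n_2'}=\Gal(F_m(n_1 n_2')/F_m(n_1))$ with $\tau(\eta)=\eta^\ell$. Galois-equivariance of $\mrm{cyc}$ then gives
$$\alpha^{N_\ell}=\gamma^{\tau-1},\qquad\gamma:=\mrm{cyc}(\eta),$$
and consequently $\alpha^{N_\ell D_{n_2'}}=(\gamma^{D_{n_2'}})^{\tau-1}$. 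Applying the inductive hypothesis with $(n_1,n_2')$ in place of $(n_1,n_2)$---the congruence conditions on primes dividing $n_2'$ are inherited from those on $n_2$---we conclude that $\gamma^{D_{n_2'}}\in(F_m(n_1 n_2')^\times/p^N)^{H_{n_2'}}$, whence $(\gamma^{D_{n_2'}})^{\tau-1}$ is a $p^N$-th power. This closes the induction.

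The main technical obstacle is making the Galois-equivariance identity $\mrm{cyc}(\tau(\eta))=\tau(\mrm{cyc}(\eta))$ fully rigorous given the half-powers $\eta^{\pm 1/2}$ and $\eta^{\pm e/2}$ that appear in the definition of $\mrm{cyc}$. This should be handled by fixing a coherent choice of square roots inside a sufficiently large cyclotomic extension and then noting that any residual sign or root-of-unity ambiguity lies in a torsion subgroup that vanishes upon passage to $F_m(n)^\times/p^N$, since $p$ is odd. Once that bookkeeping is settled, the inductive argument proceeds with no further difficulty.
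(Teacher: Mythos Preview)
Your argument is correct and is precisely the standard Kolyvagin telescoping proof of this fact. The paper itself does not supply a proof: it introduces Lemma~\ref{into} and Lemma~\ref{fix} together as ``two well-known Lemmas'' and simply states them before moving on to Definition~\ref{kappa}. So there is no paper proof to compare against beyond the implicit reference to the classical Euler-system literature, and what you have written is exactly the argument one finds there.

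A couple of minor remarks on the bookkeeping you flag at the end. Since $p$ is odd and every prime in $\mca{S}_N$ is congruent to $1$ modulo $p$ (hence odd), the order of any $\zeta\in\mu_{p^{m+1}n}$ is odd, so $\zeta^{1/2}$ may be taken canonically as $\zeta^{(d+1)/2}$ where $d$ is the order of $\zeta$; with this convention the Galois-equivariance $\sigma(\mrm{cyc}(\zeta))=\mrm{cyc}(\sigma\zeta)$ is literal, not just up to signs. And even if a sign crept in, $F_m(n)$ is totally real, so its only roots of unity are $\pm 1$, which are $p^N$-th powers since $p$ is odd. So the ``main technical obstacle'' you mention is in fact harmless here.
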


\begin{dfn}\label{kappa}
Let $n_1,n_2 \in \mca{N}_N$. Assume $\ell \in \mca{S}_N\big( F_m(n_1) \big)$ for each prime divisor of $n_2$.
We put $n=n_1 n_2$. Let $\xi\in\mu_n$ be a primitive $n$-th root of unity. 
We define $$\kappa^{n_1}_{m,N}(\xi) \in F_m(n_1)^\times/p^N $$ to be the unique element of $F_m(n_1)^\times/p^N$ such that its image in $F_m(n)^{\times}/p^N$ is the class of $\mathrm{cyc}(\rho_m\xi)^{D_{n_2}}$.
When no confusion arises, we denote $\kappa^{n_1}_{m,N}(\xi)$ by $\kappa^{n_1}(\xi)$ for simplicity.

When $n_1=1$, the element $\kappa^{1}(\xi) \in F_m^\times/p^N$ is denoted by $\kappa(\xi)$.
\end{dfn}

\subsection{}\label{2-maps}
Let $R_{F_m,N}:=\bb{Z}/p^N[\Gal(F_m/\bb{Q})]$.  Let $\chi \in \widehat\Delta$ be a character. We define $R_{F_m,N,\chi}:=\bb{Z}/p^N[\Gal(F_m)/\bb{Q}]_\chi$ to be the $\chi$-part of $R_{F_m,N}$. Namely, $R_{F_m,N,\chi}$ is the ring isomorphic to $\bb{Z}/p^N[\Gal(F_m/F_0)]$ on which $\Delta$ acts via $\chi$. Obviously, we have 
$$
R_{F_m,N} = \Lambda_{\Gamma_m}/p^N, \ \
R_{F_m,N,\chi} = \Lambda_{\chi, \Gamma_m}/p^N\ \  \text{and}\ \  
R_{F_m,N} = \bigoplus_{\chi \in \widehat{\Delta}} R_{F_m,N,\chi},
$$
where $\Lambda_{\Gamma_m}$ (resp.\ $\Lambda_{\chi,\Gamma_m}$) denotes the $\Gamma_m$-coinvariant of $\Lambda$ (resp.\ $\Lambda_\chi$).
For a $R_{F_m,N}$-module $M$ and an element $x \in M$, we denote the $\chi$-component of $x$ by $x_\chi\in M_\chi$.

As in \cite{Ku} \S 2.3, in this subsection, we shall define two homomorphisms 
$$\xymatrix{
[\cdot ]_{F_m,N,\chi}^\ell\colon (F_m^{\times}/p^N)_{\chi} \ar[r] & R_{F_m, N, \chi} \\
}$$ 
for each $\ell\in\mca{S}_N$ (cf.\ Definition \ref{[]}), and 
$$\xymatrix{
\bar{\phi}_{F_m(n),N,\chi}^\ell\colon (F_m(n)^{\times}/p^N)_{\chi} \ar[r] & R_{F_m, N, \chi}[H_n] \\
}$$ 
for each $n \in \mca{N}_N$ and $\ell\in\mca{S}_N\big( F_m(n) \big)$ (cf.\ Definition \ref{phi}). 
The homomorphism $[\cdot ]_{F_m,N,\chi}^\ell$ is defined by the valuations of the places above $\ell$, and $\bar{\phi}_{F_m(n),N,\chi}^\ell$ is defined by the local reciprocity maps. 

First, we define $[\cdot ]_{F_m,N,\chi}^\ell$. Let $K$ be an algebraic number field. We define $\mca{I}_K$ to be the group of  fractional ideals of $K$, and we write its group law additively.
We define the homomorphism $\xymatrix{(\cdot )_K\colon K^{\times} \ar[r] & \mca{I}_K}$ by $$(x)_K=\sum_{\lambda}\mrm{ord}_{\lambda}(x)\lambda,$$ where $\lambda$ runs through all prime ideals of $K$, and $\mrm{ord}_{\lambda}$ is the normalized valuation of $\lambda$. For any prime number $\ell$, we define  $\mca{I}_K^\ell$ to be the subgroup of $\mca{I}_K$ generated by all prime ideals above $\ell$. Then, we define $\xymatrix{(\cdot )_K^\ell\colon K^{\times} \ar[r] & \mca{I}_K^\ell}$ by$$(x)_K^\ell=\sum_{\lambda|\ell}\mrm{ord}_{\lambda}(x)\lambda.$$

Recall that we fix a family of embeddings $\{ \xymatrix{\ell_{\overline{\bb{Q}}}\colon\overline{\bb{Q}}\ar@{^{(}->}[r] & \overline{\bb{Q}}_\ell} \}_{\ell: \rm{prime}}$ (cf. \S 1 Notation), and we denote  the ideal of $K$ corresponding to the  embedding $\ell_{\overline{\bb{Q}}}|_K$ by $\ell_K$ for each prime number $\ell$ and algebraic number field $K$. Assume $\ell\in\mca{S}_N(K)$ and $K/\bb{Q}$ is Galois extension. Then, $\mca{I}_K^\ell$ is a free $\bb{Z}[\Gal(K/\bb{Q})]$-module generated by $\ell_K$, and we identify $\mca{I}_K^\ell$ with $\bb{Z}[\Gal(K/\bb{Q})]$ by the isomorphism $\xymatrix{\iota\colon \bb{Z}[\Gal(K/\bb{Q})] \ar[r]^(0.68){\simeq} & \mca{I}_K^\ell}$ defined by ${x \longmapsto x\cdot \ell_K}$ for $x \in \bb{Z}[\Gal(K/\bb{Q})]$.
The composition 
$\xymatrix{
K^{\times} \ar[r] & \mca{I}_K^\ell \ar[r]^(0.3){\iota^{-1}}& \bb{Z}[\Gal(K/\bb{Q})]  
}$
is also denoted by $(\cdot )_K^\ell$.

\begin{dfn}
\label{[]}
We define the $R_{F_m, N, \chi}$-homomorphism $$\xymatrix{
[\cdot ]_{F_m,N,\chi}\colon (F_m^{\times}/p^N)_{\chi} \ar[r] & (\mca{I}_{F_m}/p^N)_\chi
}$$ 
to be the homomorphism induced by $\xymatrix{(\cdot )_{F_m}^\ell\colon F_m^{\times} \ar[r] & \mca{I}_{F_m}}$.

Let $\ell\in\mca{S}_N$. We define the $R_{F_m, N, \chi}$-homomorphism
$$\xymatrix{
[\cdot ]_{F_m,N,\chi}^\ell\colon (F_m^{\times}/p^N)_{\chi} \ar[r] & R_{F_m, N, \chi}=\bb{Z}/p^N[\Gal(F_m/\bb{Q})]_{\chi}
}$$ 
to be the homomorphism induced by $\xymatrix{(\cdot )_{F_m}^\ell\colon F_m^{\times} \ar[r] & \bb{Z}[\Gal(F_m/\bb{Q})]}$.
\end{dfn}

Second, we will define $\bar{\phi}_{F_m(n),N,\chi}^\ell$. Let $n \in \mca{N}_N,$ and $\ell\in\mca{S}_N(F_m(n))$. Since we assume $N \ge m+1$, the prime number $\ell$ splits completely in $F_m(n)/\bb{Q}$, and we have $F_m(n)_{\lambda}=\bb{Q}_\ell$ for any prime ideal $\lambda$ of $F_m(n)$ above $\ell$. 
We regard the group $\bb{Q}_\ell^\times$ as a $\bb{Z}[\Gal(F_m/\bb{Q})]$-module with the trivial action of $\Gal(F_m/\bb{Q})$, and groups  $\bigoplus_{\lambda|\ell}F_m(n)_{\lambda}^{\times}$ and $\bigoplus_{\lambda|\ell}H_\ell$ are regarded as $\bb{Z}[\Gal(F_m/\bb{Q})]$-modules by the identification 
$$\bigoplus_{\lambda|\ell}F_m(n)_{\lambda}^{\times} = \mca{I}_{F_m(n)}^\ell\otimes_{\bb{Z}} \bb{Q}_\ell^\times \ \ \text{and}\ \ 
\bigoplus_{\lambda|\ell}H_\ell = \mca{I}_{F_m(n)}^\ell\otimes H_\ell,
$$
respectively.

We denote by $$\xymatrix{\phi_{\bb{Q}_\ell}\colon \bb{Q}_\ell^{\times} \ar[r] & \Gal\big(\bb{Q}_\ell(\mu_\ell)/\bb{Q}_\ell\big)=\Gal\big(\bb{Q}(\mu_\ell)/\bb{Q}\big) = H_\ell \\}$$ the reciprocity map of local class field theory defined by $\phi_{\bb{Q}_\ell}(\ell)=(\ell_{\bb{Q}(\mu_\ell)},\bb{Q}(\mu_\ell)/\bb{Q})$. The homomorphism  
$$\xymatrix{
\phi_{F_m(n)}^{\ \ell}\colon F_m(n)^{\times} \ar[r] & \bb{Z}[\Gal(F_m(n)/\bb{Q})]\otimes H_\ell \\ 
}$$ 
is defined to be the composition of the three homomorphisms of $\bb{Z}[\Gal( F_m(n)/\bb{Q} )]$-modules:
$$\xymatrix{\mrm{diag}\colon F_m(n)^{\times} \ar[r] & \bigoplus_{\lambda|\ell}F_m(n)_{\lambda}^{\times}, }$$
$$\xymatrix{\oplus\phi_{\bb{Q}_\ell} \colon \bigoplus_{\lambda|\ell}F_m(n)_{\lambda}^{\times} \ar[r] & \bigoplus_{\lambda|\ell}H_\ell, }$$
$$\xymatrix{\iota_H^{-1} \colon \bigoplus_{\lambda|\ell}H_\ell \ar[r]^(0.4){\simeq } & \bb{Z}[\Gal( F_m(n)/\bb{Q})]\otimes H_\ell, }$$
which are defined  as follows:
\begin{enumerate}
\item the first homomorphism $\mrm{diag}$ is the diagonal inclusion;
\item the second homomorphism $\oplus\phi_{\bb{Q}_\ell}$ is the direct sum of the reciprocity maps; 
\item the third isomorphism $\iota_H^{-1}$ is the inverse of the isomorphism 
$$\xymatrix{\iota_H \colon \bb{Z}[\Gal( F_m(n)/\bb{Q}) ]\otimes H_\ell \ar[r]^(0.52){\simeq} & \bigoplus_{\lambda|\ell}H_\ell=\mca{I}_{F_m(n)}^\ell\otimes H_\ell, \\}$$
which is induced by the above isomorphism 
$$\xymatrix{\iota\colon \bb{Z}[\Gal( F_m(n)/\bb{Q})] \ar[r]^(0.64){\simeq} & \mca{I}_{F_m(n)}^\ell}$$
given by 
$x \longmapsto x \cdot \ell_{F_m(n)}$.
\end{enumerate}

\begin{dfn}
\label{phi}Let $n \in \mca{N}_N,$ and $\ell\in\mca{S}_N(F_m(n))$.
We define $$\xymatrix{\phi_{F_m(n),N,\chi}^\ell\colon (F_m(n)^{\times}/p^N)_{\chi} \ar[r] & \bb{Z}/p^N[\Gal(F_m(n)/\bb{Q})]_{\chi}\otimes H_\ell \\}$$ to be the homomorphism of  $R_{F_m,N,\chi}[H_n]$-modules induced by $\phi_{F_m(n)}^\ell$. 
The choice of a generator $\sigma_\ell$ of $H_\ell$ induces the $R_{F_m,N,\chi}[H_n]$-homomorphism
$$\xymatrix{\bar{\phi}_{F(n)_m,N,\chi}^\ell\colon (F(n)_m^{\times}/p^N)_{\chi} \ar[r] & \bb{Z}[\Gal(F_m(n)/\bb{Q})]_{\chi}= R_{F_m,N,\chi}[H_n] \\}.$$ 
\end{dfn}

Next, we shall prove some formulas of the Euler system of cyclotomic units.
We fix a primitive $\ell$-th root of unity $\xi_\ell$ for each $\ell \in \mca{S}_N$. For each $n \in \mca{N}_N$, we define a primitive $n$-th root of unity by $$\xi_n=\prod_{\substack{ \ell \in \mca{S}_N  \\ \ell |n}} \xi_\ell.$$ 
As in \cite{Ku}, we use the notion {\em well-ordered}.
\begin{dfn}
\label{well-ordered}
Let $n\in\mca{N}_N$. 
We call $n$ {\em well-ordered} if and only if $n$ has a factorization $n=\prod_{i=1}^r \ell_i$ such that $\ell_{i+1} \in S(F_m(\prod_{j=1}^i \ell_j))$ for$i=1,\dots,r$. In other words, $n$ is well-ordered if and only if $n$ has a factorization $n=\prod_{i=1}^r \ell_i$ such that $$\ell_{i+1} \equiv 1 \pmod{p^N \prod_{j=1}^i \ell_j}$$ for $i=1,\dots,r-1$.
\end{dfn}

\begin{prop}
\label{[] and phi}
Let $n$ be an integer contained in $\mca{N}_N$. 
\begin{enumerate}
\item  If $\lambda$ is a prime ideal of $K$ not dividing $n$, the $\lambda$-component of $[\kappa(\xi_n)_\chi]_{F_m,N,\chi}$ is $0$. 
In particular, if $q \in \mca{S}_N$ is a prime number not dividing $n$, we have $$[\kappa(\xi_n)_\chi]_{F_m,N,\chi}^q = 0.$$
\item Let $\ell$ be a prime number dividing $n$. Then,  $$[\kappa(\xi_n)_\chi]_{F_m,N,\chi}^{\ell}=-\bar{\phi}_{F_m,N,\chi}^{\ell}(\kappa(\xi_{n/\ell})_\chi).$$ 
\item If $n$ is well-ordered, then $$\bar{\phi}_{F_m,N,\chi}^{\ell}(\kappa(\xi_n)_\chi)=0 $$ for each prime number $\ell$ dividing $n$. {\em (See \cite{MR} Theorem A.4.)}
\end{enumerate}
\end{prop}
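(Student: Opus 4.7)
All three parts follow standard Euler-system arguments; we sketch the strategy without belaboring the routine computations.

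Part (1) follows from the fact that cyclotomic units are global units away from the conductor of the ambient cyclotomic field. For $n=1$, $\kappa(\xi_1)=\mrm{cyc}(\rho_m)\in F_m^{\times}$ is supported only at the prime above $p$, and $\mca{S}_N$ excludes $p$. For $n>1$, the order $p^{m+1}n$ of $\rho_m\xi_n$ is not a prime power, so $\mrm{cyc}(\rho_m\xi_n)\in\mca{O}_{F_m(n)}^{\times}$, and $\mrm{cyc}(\rho_m\xi_n)^{D_n}$ is a global unit. Since any lift of $\kappa(\xi_n)$ to $F_m^{\times}$ differs from this element by some element of $(F_m(n)^{\times})^{p^N}$, its valuation at any prime of $F_m$ not dividing $n$ is divisible by $p^N$, and the projection to $(\mca{I}_{F_m}/p^N)_{\chi}$ vanishes there.

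Part (2) is the classical Kolyvagin derivative congruence. The algebraic identity
\[
(\sigma_\ell-1)D_\ell=(\ell-1)-\sum_{k=0}^{\ell-2}\sigma_\ell^k \quad\text{in } \bb{Z}[H_\ell],
\]
combined with the Euler-system norm relation
\[
\mrm{cyc}(\rho_m\xi_n)^{\sum_{k=0}^{\ell-2}\sigma_\ell^k}=\mrm{cyc}(\rho_m\xi_{n/\ell})^{\Frob_\ell-1}
\]
(which follows from Lemma~\ref{norm}(1) applied to $\xi=\rho_m\xi_{n/\ell}$, using $(\rho_m\xi_{n/\ell})^{\ell}=\Frob_\ell(\rho_m\xi_{n/\ell})$, valid since $\ell$ is unramified in $F_m(n/\ell)/\bb{Q}$), yields, after using $p^N\mid\ell-1$ to kill the $(\ell-1)$-contribution modulo $p^N$-th powers,
\[
(\sigma_\ell-1)\,\mrm{cyc}(\rho_m\xi_n)^{D_n}\equiv\mrm{cyc}(\rho_m\xi_{n/\ell})^{-(\Frob_\ell-1)D_{n/\ell}}\pmod{(F_m(n)^{\times})^{p^N}}.
\]
Fixing a prime of $F_m(n)$ above $\ell$ and invoking the defining congruence $\pi^{\sigma_\ell-1}\equiv\rho_{N_{\{\ell\}}-1}\pmod{\overline{\ell_K}}$ of $\sigma_\ell$ converts the left-hand side into the $\ell$-adic valuation $[\kappa(\xi_n)_{\chi}]^{\ell}_{F_m,N,\chi}$, while local class field theory realizes the right-hand side as $-\bar{\phi}^{\ell}_{F_m,N,\chi}(\kappa(\xi_{n/\ell})_{\chi})$. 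The claim follows after projection to the $\chi$-component.

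Part (3) is essentially \cite{MR} Theorem A.4 adapted to our setting. The well-orderedness of $n$ allows an inductive norm-relation computation at each prime factor of $n$, which shows that $\kappa(\xi_n)$ is a $p^N$-th power in the local completion $F_{m,\lambda}^{\times}$ for every prime $\lambda$ of $F_m$ above each $\ell\mid n$; the key point is that well-orderedness guarantees, at each step, that the prime being introduced splits completely in the subfield accumulated so far, enabling the iteration. Since $\bar{\phi}^{\ell}_{F_m,N,\chi}$ factors through these local completions and $p^N$-th powers lie in the kernel of local reciprocity modulo $p^N$, the image vanishes.

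The main obstacle lies in part (2): translating the multiplicative congruence in $F_m(n)^{\times}/p^N$ into the precise equality in $R_{F_m,N,\chi}$ with the correct sign. This requires coordinating the normalization of $\phi_{\bb{Q}_\ell}$, the defining congruence for $\sigma_\ell$, the direction of the Euler-system norm in Lemma~\ref{norm}(1), and the identification $\iota_H^{-1}$. No fundamentally new idea beyond \cite{Ku}~\S 2.3 is required, but this bookkeeping is the technical heart of the argument.
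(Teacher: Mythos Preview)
Your handling of (1) and (2) matches the paper's approach; indeed for (2) the paper simply cites \cite{CS} Theorem 5.4.9, so your explicit derivative computation goes further than what the paper records. One small correction in (1): for $n=1$, the element $\mrm{cyc}(\rho_m)$ is already a global unit of $F_m$ (it lies in $C_{F_m}\subset\mca{O}_{F_m}^\times$), not merely supported above $p$.

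For (3), however, your sketch contains a genuine mis-step. You assert that $\kappa(\xi_n)$ is a $p^N$-th power in $F_{m,\lambda}^\times$ for each $\lambda\mid\ell$, but this is false in general: by part (2) the $\ell$-valuation of $\kappa(\xi_n)$ equals $-\bar{\phi}^{\ell}_{F_m,N,\chi}(\kappa(\xi_{n/\ell})_\chi)$, which need not vanish, so the local image of $\kappa(\xi_n)$ in $F_{m,\lambda}^\times/p^N$ is typically nontrivial. What is needed is the weaker statement that $\kappa(\xi_n)$ lies in the kernel of the local reciprocity map modulo $p^N$ (the subgroup generated by a uniformizer), and this is what the paper actually proves. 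The paper's mechanism is also not an ``inductive norm-relation computation'' in your sense. Rather, with $n_1=\prod_{j<i}\ell_j$ (well-orderedness is used here, to ensure the partial derivative $\kappa^{n_1}(\xi_n)\in F_m(n_1)^\times/p^N$ is defined), the paper identifies $\ker\phi_{\bb{Q}_{\ell_i},N}$ with the kernel of the local inclusion $\iota\colon F_m(n_1)_\lambda^\times/p^N\to F_m(n_1\ell_i)_{\lambda'}^\times/p^N$, and then computes $\iota(\kappa^{n_1}(\xi_n))=\kappa^{n_1\ell_i}(\xi_n)^{D_{\ell_i}}$ directly: since $\kappa^{n_1\ell_i}(\xi_n)$ lies in the unramified unit part $U'$ on which $H_{\ell_i}$ acts trivially, the operator $D_{\ell_i}=\sum_{k=1}^{\ell_i-2}k\sigma_{\ell_i}^k$ acts as multiplication by $(\ell_i-1)(\ell_i-2)/2\equiv 0\pmod{p^N}$. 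This explicit vanishing is the heart of the argument and is absent from your outline.
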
 

\begin{rem}
The third assertion of Proposition \ref{[] and phi} is cyclotomic unit version of Lemma 5.3 in \cite{Ku}, and also a special case of \cite{MR} Theorem A.4. 
In this paper, using the argument in \cite{Ku}, we shall directly give a proof of Proposition \ref{[] and phi} (3). 
\end{rem}

\begin{proof}
Let us prove Proposition \ref{[] and phi}.

We prove the first assertion. If $\lambda$ is a prime ideal of $F_m$ not dividing $pn$, the $\lambda$-component of $[\kappa(\xi_n)_\chi]_{F_m,N,\chi}$ is $0$ since $F_m(n)/F_m$ is unramified outside $pn$. Let $\mf{p}_m$ be the (unique) prime ideal of $F_m$ above $p$, and $\mf{P}$ a prime ideal of $F_m(n)$ above $\mf{p}_m$. The ramification index of $\mf{P}/\mf{p}$ is $2$. Since $p\not=2$, the $\mf{p}_m$-component of $[\kappa(\xi_n)_\chi]_{F_m,N,\chi}$ is $0$. The proof of the first assertion is complete. 

The second assertions is \cite{CS} Theorem 5.4.9. Note that $l_\ell:= -\bar{\phi}_{F_m,N,\chi}^{\ell}$ is used in \cite{CS} instead of our $\bar{\phi}_{F_m,N,\chi}^{\ell}$.

We shall prove the third assertion.  
Assume $n=\prod_{i=1}^r \ell_i \in\mca{N}_N$, where $\ell_1,\dots, \ell_r$ are distinct prime numbers, and $\ell_{i+1} \equiv 1 \pmod{p^N \prod_{j=1}^i \ell_j}$ for each $i=1,\dots,r-1$.
We put $n_1:=\prod_{j=1}^{i-1}\ell_j$. 
Note $\ell_1 \in \mca{S}_N(F_m(n))$.
It is sufficient to prove the following claim.
\begin{claim}\label{relative}
$\bar{\phi}_{F_m(n_1),N,\chi}^{\ell_i}\big( \kappa^{n_1}(\xi_n) \big)=0.$
\end{claim}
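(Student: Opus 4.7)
The plan is to follow the strategy of Kurihara (\cite{Ku} Lemma 5.3), adapted from Gauss sums to cyclotomic units; this is also a special case of \cite{MR} Theorem A.4. The idea is to reduce $\bar{\phi}^{\ell_i}(\kappa^{n_1}(\xi_n))$ to a local computation at primes above $\ell_i$ and exploit a telescoping identity for the Kolyvagin derivative operator.

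First, I would use well-orderedness. By Definition \ref{well-ordered}, each $\ell_j$ with $j<i$ satisfies $\ell_j \equiv 1 \pmod{p^N}$ and in particular $\ell_i \in \mca{S}_N(F_m(n_1))$, so $\ell_i$ splits completely in $F_m(n_1)/\bb{Q}$. Hence every prime $\lambda$ of $F_m(n_1)$ above $\ell_i$ satisfies $F_m(n_1)_\lambda = \bb{Q}_{\ell_i}$, and the definition of $\bar{\phi}_{F_m(n_1),N,\chi}^{\ell_i}$ reduces to computing the local reciprocity map $\phi_{\bb{Q}_{\ell_i}}$ at each such $\lambda$ on the image of $\kappa^{n_1}(\xi_n)$. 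By Lemma \ref{into}, $\kappa^{n_1}(\xi_n)$ is determined by its image $\mathrm{cyc}(\rho_m \xi_n)^{D_{n/n_1}}$ in $F_m(n)^\times/p^N$, so I can compute instead in $F_m(n)$ locally at any prime above $\ell_i$.

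Second, I would factor $D_{n/n_1}=D_{\ell_i}\cdot D'$ with $D':=D_{n/n_1\ell_i}$ and set $y:=\mathrm{cyc}(\rho_m\xi_n)^{D'}$. Using the telescoping identity
\[
(\sigma_{\ell_i}-1)D_{\ell_i} \;=\; (\ell_i-1)-N_{\ell_i},\qquad N_{\ell_i}:=\sum_{k=0}^{\ell_i-2}\sigma_{\ell_i}^k,
\]
and the norm relation (Lemma \ref{norm}(1)), which gives $y^{N_{\ell_i}}=\bigl(\mathrm{cyc}(\rho_m\xi_{n/\ell_i})^{\ell_i-1}\bigr)^{D'}$ modulo the Euler factor at $\ell_i$, one shows that $(\sigma_{\ell_i}-1)\cdot y^{D_{\ell_i}}$ is, modulo $p^N$, a controlled combination of norms from $\bb{Q}_{\ell_i}(\mu_{\ell_i})$ down to $\bb{Q}_{\ell_i}$ and values already known inductively to be $p^N$-th powers. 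Next, I would invoke the defining property of the generator $\sigma_{\ell_i}$: with $K$ the maximal $p$-subextension of $F_m(\ell_i)/F_m$ and $\pi$ a uniformizer of $K_{\ell_K}$, we have $\pi^{\sigma_{\ell_i}-1}\equiv\rho_{N_{\{\ell_i\}}-1}\pmod{\overline{\ell_K}}$. This is exactly what identifies local reciprocity at $\ell_i$ with the operator $\sigma_{\ell_i}-1$ on the $p$-part of the tame quotient; together with the previous step it forces $\phi_{\bb{Q}_{\ell_i}}(\kappa^{n_1}(\xi_n))=0$ in $R_{F_m,N,\chi}[H_n]$.

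Finally, I would assemble these local computations into the global statement by summing over primes of $F_m(n_1)$ above $\ell_i$ via the identification $\iota_H$; the $\chi$-projection is harmless since all the operations commute with the $\Delta$-action. The main obstacle will be the careful bookkeeping, in particular (i) tracking the powers of $p$ between $N$ and $N_{\{\ell_i\}}\ge N$, (ii) handling the signs and conventions of the reciprocity map (recalling the paper's convention $\phi_{\bb{Q}_\ell}(\ell)=(\ell_{\bb{Q}(\mu_\ell)},\bb{Q}(\mu_\ell)/\bb{Q})$ with geometric Frobenius, which is the source of the sign in Proposition \ref{[] and phi}(2)), and (iii) verifying that the ``error'' coming from the factor $\mathrm{cyc}(\rho_m\xi_{n/\ell_i})$ appearing in the norm relation is itself a $p^N$-th power locally at $\lambda\mid\ell_i$, so that it drops out of the reciprocity computation. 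Everything else is formal manipulation in $\bb{Z}[H_n]$.
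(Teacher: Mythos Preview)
Your plan takes a different route from the paper and, as written, has a gap at the crucial step.

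The paper's argument is short and does \emph{not} use the telescoping identity $(\sigma_{\ell_i}-1)D_{\ell_i}=(\ell_i-1)-N_{\ell_i}$ at all. Fix a prime $\lambda$ of $F_m(n_1)$ over $\ell_i$ and a prime $\lambda'$ of $F_m(n_1\ell_i)$ over $\lambda$; let $\pi'$ be a uniformizer at $\lambda'$ and $\pi:=N_{\lambda'/\lambda}(\pi')$. Writing $F_m(n_1)_\lambda^\times/p^N=U\times P$ and $F_m(n_1\ell_i)_{\lambda'}^\times/p^N=U'\times P'$ (unit part times the cyclic group generated by the uniformizer), the paper observes two facts: (i) by local class field theory $\ker\phi_{\bb{Q}_{\ell_i},N}=P$, and (ii) the kernel of the natural map $\iota\colon F_m(n_1)_\lambda^\times/p^N\to F_m(n_1\ell_i)_{\lambda'}^\times/p^N$ is also $P$. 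Hence it suffices to show $\iota\big(\kappa^{n_1}(\xi_n)\big)=1$. But $\iota\big(\kappa^{n_1}(\xi_n)\big)=\kappa^{n_1\ell_i}(\xi_n)^{D_{\ell_i}}$, and $\kappa^{n_1\ell_i}(\xi_n)$ is a local unit at $\lambda'$ (the extension $F_m(n)_{\lambda''}/F_m(n_1)_\lambda$ is unramified at $\lambda''\mid\lambda'$ and $\mrm{cyc}(\rho_m\xi_n)$ is a unit there). Since $F_m(n_1\ell_i)_{\lambda'}/F_m(n_1)_\lambda$ is totally ramified, $H_{\ell_i}$ acts trivially on $U'$, so $D_{\ell_i}$ acts on $\kappa^{n_1\ell_i}(\xi_n)\in U'$ as multiplication by $\sum_{k=1}^{\ell_i-2}k=(\ell_i-1)(\ell_i-2)/2\equiv 0\pmod{p^N}$. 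That is the whole proof.

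Your plan instead computes $(\sigma_{\ell_i}-1)\cdot y^{D_{\ell_i}}$ via the telescoping identity and the norm relation. But that computation only re-establishes that $y^{D_{\ell_i}}$ is $H_{\ell_i}$-fixed modulo $p^N$, i.e.\ Lemma~\ref{fix}; it says nothing about the value of $\phi_{\bb{Q}_{\ell_i}}$ on the descended element. Your sentence ``this is exactly what identifies local reciprocity at $\ell_i$ with the operator $\sigma_{\ell_i}-1$'' is where the argument breaks: $\sigma_{\ell_i}-1$ acts on $F_m(n_1\ell_i)^\times$, while $\phi_{\bb{Q}_{\ell_i}}$ is a map $\bb{Q}_{\ell_i}^\times\to H_{\ell_i}$; the defining congruence $\pi^{\sigma_{\ell_i}-1}\equiv\rho_{N_{\{\ell_i\}}-1}$ does not turn a statement about $(\sigma_{\ell_i}-1)y^{D_{\ell_i}}$ into a computation of $\phi_{\bb{Q}_{\ell_i}}(y^{D_{\ell_i}})$. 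Likewise, your step (iii) is not correct as stated: $\mrm{cyc}(\rho_m\xi_{n/\ell_i})$ is a local unit at $\lambda$, not a local $p^N$-th power, so it does not drop out of the reciprocity map for the reason you give. What you are missing is precisely the identification $\ker\phi_{\bb{Q}_{\ell_i},N}=\ker\iota$ together with the observation that $H_{\ell_i}$ acts trivially on $U'$; once you have those, the telescoping identity is unnecessary and $D_{\ell_i}$ collapses to the scalar $(\ell_i-1)(\ell_i-2)/2$.
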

Let $\lambda$ be a prime ideal of $F_m(n_1)$ above $\ell_i$, and $\lambda'$ the prime of $F_m(n_1\ell_i)$ above $\lambda$.
Let $\pi'$ be a prime element of $F_m(n_1\ell_i)_{\lambda'}$. We take the prime element $\pi$ of $F_m(n_1)_\lambda$ defined by $$\pi:=N_{F_m(n_1\ell_i)_{\lambda'}/F_m(n_1)_{\lambda}}(\pi').$$

We have a decomposition $F_m(n_1)_{\lambda}^\times/p^N = U \times P$
as a group, where $U$ is the image of the unit group of $F_m(n_1\ell_i)_{\lambda}$ and $P$ is the cyclic subgroup generated by the image of $\pi$. Both group $U$ and $P$ are cyclic groups of order $p^N$. The group $U$ is generated by the image of a $p^{N_{\ell_i}}$-th root of unity in $F_m(n_1)_{\lambda}$.

Similarly, we have the decomposition $F_m(n_1\ell_i)_{\lambda'}^\times/p^N = U' \times P'$
as a group, where $U'$ is the image of the unit group of $F_m(n_1\ell_i)_{\lambda'}$ and $P'$ is the cyclic subgroup generated by the image of $\pi'$.
Both group $U'$ and $P'$ are cyclic groups of order $p^N$ with the action of $H_{\ell_i}$. 

We consider the homomorphism 
$\xymatrix{\phi_{\bb{Q}_{\ell},N}\colon F_m(n_1)_{\lambda}^\times/p^N  = \bb{Q}_\ell^{\times}/p^N  \ar[r] & H_\ell \otimes \bb{Z}/p^N\bb{Z}}$
induced by $\phi_{\bb{Q}_{\ell}}$. 
By local class field theory, the kernel of $\phi_{\bb{Q}_{\ell},N}$ is $P$. 
On the other hand, the kernel of the natural homomorphism $\xymatrix{\iota\colon F_m(n_1)_{\lambda}^\times/p^N \ar[r] & F_m(n_1\ell_i)_{\lambda'}^\times/p^N  }$
is also $P$. Then, we have $\ker\phi_{\bb{Q}_{\ell},N}=\ker\iota$. 

We shall show that $\iota\big(\kappa^{n_1}(\xi_n)\big)=1$. 
Let $\lambda''$ be a prime ideal of $F_m(n)$ above $\lambda$.
Since $F_m(n)_{\lambda''}/F_m(n_1)_{\lambda}$ is unramified and $\mathrm{cyc}(\rho_m\xi_n)$ is a unit in $F_m(n)_{\lambda''}$,
the element $\kappa^{n_1\ell_i}(\xi_n)$ is contained in $U'$. 
Then, the action of $H_{\ell_i}$ on $\kappa^{n_1\ell_i}(\xi_n)^{D_{\ell_i}} \in F_m(n_1\ell_i)_{\lambda'}/p^N$ is trivial, and we have 
\begin{align*}
\iota\big(\kappa^{n_1}(\xi_n)\big) &= \kappa^{n_1\ell_i}(\xi_n)^{D_{\ell_i}} \\
 &= \kappa^{n_1\ell_i}(\xi_n)^{\sum_{k=1}^{\ell_i-2}k\sigma_{\ell_i}^k} \\
 &= \kappa^{n_1\ell_i}(\xi_n)^{(\ell_i-1)(\ell_i-2)/2}  \\
 &= 1
\end{align*}
in $F_m(n_1\ell_i)_{\lambda'}^\times/p^N$. 
The proof of Claim \ref{relative} and Proposition \ref{[] and phi} (3) is complete.
\end{proof}

\subsection{}\label{the subsection of x}
In this subsection, we will define the Kurihara's elements $x_{\nu ,q} \in (F_m^{\times}/p^N)_{\chi}$ which become a key of the proof of Theorem \ref{Main theorem}.

\begin{dfn}
Let $q\nu =q\prod_{i=1}^r \ell_i \in\mca{N}_N$, where $q, \ell_1,\dots,\ell_r$ are distinct prime numbers. For a positive integer $d$ dividing $\nu$, we define $\tilde{\kappa}_{d,q} \in (F_m^\times/p^N)_{\chi} \otimes(\bigotimes_{\ell |d} H_\ell)$ by$$\tilde{\kappa}_{d,q} := \kappa(\xi_q\prod_{\ell |d}\xi_\ell)_\chi\otimes\big( \bigotimes_{\ell |d}\sigma_\ell \big).$$ 
\end{dfn}

Let $q\nu \in\mca{N}_N$ and assume $q\nu$ is {\em well-ordered}. Assume that for each prime number $\ell$ dividing $\nu$, an element $a_\ell \in R_{F_m,N, \chi} \otimes H_\ell$ is given. Then, we have an element $\bar{a}_\ell \in R_{F_m, N, \chi}$ such that $a_\ell = \bar{a}_\ell\otimes \sigma_\ell$. Note that we will take $\{ a_\ell \}_{\ell | \nu}$ explicitly later, but here, we take arbitrary one.

For a positive integer $d$ dividing $\nu$, we define the element $a_d$ by 
$$a_d:=\bigotimes_{\ell |d} a_\ell \in R_{F_m,N, \chi} \otimes \big (\bigotimes_{\ell |d} H_\ell \big),$$
and the element $\bar{a}_d \in R_{F_m,N, \chi}$ by 
$$a_d = \bar{a}_d \otimes\big( \bigotimes_{\ell |d}\sigma_\ell \big).$$

Note that we write the group law of $(F_m^{\times}/p^N)_{\chi} \otimes \big( \bigotimes_{\ell |d} H_\ell \big)$ multiplicatively. 
\begin{dfn}
\label{x}
We define the element  $\tilde{x}_{\nu, q} $ by$$\tilde{x}_{\nu ,q} := \prod_{d|\nu}a_d\otimes \tilde{\kappa}_{\nu/d,q} \in (F_m^{\times}/p^N)_{\chi} \otimes \big( \bigotimes_{\ell |d} H_\ell \big).$$
Note that we naturally identify the $R_{F_m, N, \chi}$-module $(F_m^{\times}/p^N)_{\chi} \otimes \big( \bigotimes_{\ell |d} H_\ell \big)$ with $$(F_m^{\times}/p^N)_{\chi}\otimes_{R_{F_m, N, \chi}}R_{F_m, N, \chi} \otimes \big( \bigotimes_{\ell |d} H_\ell \big).$$
The element $x_{\nu,q} \in (F_m^{\times}/p^N)_{\chi}$ is defined by 
$$\tilde{x}_{\nu,q}=x_{\nu,q}\otimes\big( \bigotimes_{\ell | \nu}\sigma_\ell \big).$$
\end{dfn}

The following formulas follows from Proposition \ref{[] and  phi} easily.

\begin{prop}[cf. \cite{Ku} Proposition 6.1]\label{[] and phi and x}
Let $q\nu \in  \mca{N}_N$ and we assume that $q\nu$ is well-ordered.
\begin{enumerate}
\item If $\lambda$ is a prime ideal of $K$ not dividing $n$, the $\lambda$-component of $[x_{\nu,q}]_{F_m,N,\chi}$ is $0$. 
In particular, if $s$ is a prime number not dividing $q\nu$, we have $$[x_{\nu,q}]_{F_m,N,\chi}^s = 0.$$
\item Let $\ell$ be a prime number dividing $\nu$. Then, we have   $$[x_{\nu,q}]_{F_m,N,\chi}^{\ell}=-\bar{\phi}_{F_m,N,\chi}^{\ell}(x_{\nu/\ell,q}).$$
\item Let $\ell$ be a prime number dividing $\nu$. Then, we have $$\bar{\phi}_{F_m,N,\chi}^{\ell}(x_{\nu,q})= \bar{a}_\ell\bar{\phi}_{F_m,N,\chi}^{\ell}(x_{\nu/\ell,q}).$$
\end{enumerate}
\end{prop}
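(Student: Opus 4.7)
The plan is to deduce all three parts by $R_{F_m,N,\chi}$-linearity from Proposition \ref{[] and phi}, once a closed-form expression for $x_{\nu,q}$ has been extracted from Definition \ref{x}. Cancelling the common factor $\bigotimes_{\ell|\nu}\sigma_\ell$ from both sides of
$$\prod_{d|\nu}a_d\otimes\tilde\kappa_{\nu/d,q}\;=\;\tilde x_{\nu,q}\;=\;x_{\nu,q}\otimes\bigotimes_{\ell|\nu}\sigma_\ell,$$
and using $a_d=\bar a_d\otimes\bigotimes_{\ell'|d}\sigma_{\ell'}$ together with $\tilde\kappa_{\nu/d,q}=\kappa(\xi_q\prod_{\ell'|\nu/d}\xi_{\ell'})_\chi\otimes\bigotimes_{\ell'|\nu/d}\sigma_{\ell'}$, I obtain the working formula
$$x_{\nu,q}\;=\;\prod_{d|\nu}\kappa\Bigl(\xi_q\prod_{\ell'|\nu/d}\xi_{\ell'}\Bigr)_{\chi}^{\bar a_d}\ \in\ (F_m^\times/p^N)_\chi.$$

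For (1), applying the $R_{F_m,N,\chi}$-linear map $[\cdot]_{F_m,N,\chi}$ to the working formula and invoking Proposition \ref{[] and phi} (1) term by term shows that the $\lambda$-component of $[x_{\nu,q}]_{F_m,N,\chi}$ is zero whenever $\lambda$ does not lie above any prime dividing $q(\nu/d)$ for some $d|\nu$. Since every such prime divides $q\nu$, the $\lambda$-component vanishes for all $\lambda\nmid q\nu$, and in particular $[x_{\nu,q}]^s_{F_m,N,\chi}=0$ for $s\in\mca{S}_N$ with $s\nmid q\nu$.

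For (2), fix a prime $\ell|\nu$ and apply $[\cdot]^\ell_{F_m,N,\chi}$ to the working formula. By Proposition \ref{[] and phi} (1), the summands indexed by $d$ with $\ell\nmid q(\nu/d)$ (i.e., with $\ell|d$) all vanish, and by Proposition \ref{[] and phi} (2) each surviving summand (indexed by $d|\nu/\ell$) equals $-\bar a_d\cdot\bar\phi^\ell_{F_m,N,\chi}\bigl(\kappa(\xi_q\prod_{\ell'|(\nu/\ell)/d}\xi_{\ell'})_\chi\bigr)$. Re-reading the resulting sum via the working formula for $x_{\nu/\ell,q}$ identifies it with $-\bar\phi^\ell_{F_m,N,\chi}(x_{\nu/\ell,q})$, as required.

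For (3), the extra ingredient is that well-orderedness of $q\nu$ is inherited by each $q(\nu/d)$: removing factors only weakens the congruences in Definition \ref{well-ordered}. Proposition \ref{[] and phi} (3) then annihilates every summand with $\ell|q(\nu/d)$, leaving only indices with $\ell|d$. Writing such a $d$ as $d=\ell d'$ with $d'|\nu/\ell$ and using the multiplicative relation $\bar a_{\ell d'}=\bar a_\ell\bar a_{d'}$, which follows from $a_d=\bigotimes_{\ell''|d}a_{\ell''}$, the surviving sum factors as $\bar a_\ell$ times the working formula applied to $\bar\phi^\ell(x_{\nu/\ell,q})$, yielding (3). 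I expect the main obstacle to be purely notational, namely tracking the tensor factors $\bigotimes\sigma_{\ell'}$ so that the closed form for $x_{\nu,q}$ is stated correctly; once this is in place, all three assertions reduce to direct applications of Proposition \ref{[] and phi}.
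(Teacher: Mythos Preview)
Your proof is correct and follows exactly the approach the paper indicates: the paper gives no detailed argument, stating only that the formulas ``follow from Proposition \ref{[] and phi} easily,'' and your write-up is precisely the natural expansion of this remark via the closed form $x_{\nu,q}=\prod_{d\mid\nu}\kappa(\xi_{q\nu/d})_\chi^{\bar a_d}$ together with $R_{F_m,N,\chi}$-linearity. Your observation that well-orderedness of $q\nu$ descends to every divisor $q(\nu/d)$ is the only point requiring a moment's thought, and you handle it correctly.
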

%
%
%

\section{An application of the Chebotarev density theorem}\label{the section of chebo}

Recall that we fix a family of embeddings $\{ \xymatrix{\ell_{\overline{\bb{Q}}}\colon\overline{\bb{Q}}\ar@{^{(}->}[r] & \overline{\bb{Q}}_\ell} \}_{\ell: \rm{prime}}$ satisfying the technical condition (A) for families of embeddings as follows.
\begin{itemize}
\item[(A)]  {\em For any subfield $K \subset \overline{\bb{Q}}$ which is a finite Galois extension of $\bb{Q}$ and any element $\sigma \in \Gal(K/\bb{Q})$, there exist infinitely many prime numbers $\ell$ such that $\ell$ is unramified in $K/\bb{Q}$ and $(\ell_K, K/\bb{Q})= \sigma$, where $\ell_K$ is the prime ideal of $K$ corresponding to the  embedding $\ell_{\overline{\bb{Q}}}|_K$.}
\end{itemize}
Note that the existence of such a family of embeddings follows from the Chebotarev density theorem. 
We need the condition (A) in the proof of Proposition \ref{chebo appli}.

Here, we shall prove Proposition \ref{chebo appli}, which is the key of induction argument in the proof of Theorem \ref{Main theorem}.
This proposition corresponds to Lemma 9.1 in \cite{Ku}.
\begin{prop}
\label{chebo appli}
Let $\chi \in \widehat\Delta $ be a non-trivial character. 
Assume $qn=q\prod_{i=1}^r\ell_i \in \mca{N}_N$, where $q,\ell_1,\dots,\ell_r$ are prime numbers. Suppose the following are given:
\begin{itemize}
\item an element $\tau_i \in R_{F_m,N,\chi} \otimes H_{\ell_i}$ for each $i=1,\dots,r$;
\item a finite $R_{F_m, N, \chi}$-submodule $W$ of $(F_m^{\times}/p^N)_{\chi}$;
\item a $R_{F_m, N, \chi}$-homomorphism $\xymatrix{\lambda\colon W \ar[r] & R_{F_m, N, \chi}.}$
\end{itemize}
Then, there exist infinitely many $q' \in \mca{S}_N(F_m(n))$ which have the following properties:
\begin{enumerate}
\item the class of $q'_{F_m}$ in $A_{m,\chi}$ coincides with the class of $q_{F_m}$;
\item there exists an element $z \in (F_m^\times\otimes \bb{Z}_p)_{\chi}$ such that $$(z)_{F_m,\chi}=(q'_{F_m}-q_{F_m})_\chi \in \big(\mca{I}_{F_m}\otimes\bb{Z}_p \big)_\chi,$$ and $$\phi_{F_m,N,\chi}^{\ell_i}(z)=\tau_i$$ for each $i=1,\dots,r$;
\item the group $W$ is contained in the kernel of $[\cdot ]_{F_m,N,\chi}^{q'}$, and $$\lambda(x)=\bar{\phi}_{F_m,N,\chi}^{q'}(x)$$ for any $x \in W$.
\end{enumerate}
\end{prop}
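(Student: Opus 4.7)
The proof is a standard Chebotarev density argument, applied to a sufficiently large auxiliary Galois extension $L/\bb{Q}$ using the strong form of Chebotarev provided by condition (A). The plan is to realise the desired $q'$ as a prime whose Frobenius $(q'_L,L/\bb{Q})$ coincides with a carefully chosen element $\sigma\in\Gal(L/\bb{Q})$.

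First, I would construct $L$ as the compositum of four layers, each chosen to encode one of the required constraints:
\begin{enumerate}
\item the field $F_m(n)\cdot\bb{Q}(\mu_{p^N})$, so that $\sigma$ trivial on this layer forces $q'\in\mca{S}_N(F_m(n))$ and, via ramification considerations as in Proposition~\ref{[] and phi}, also yields $W\subseteq\ker[\cdot]^{q'}_{F_m,N,\chi}$;
\item the $p$-Hilbert class field $H_m/F_m$, so that $\sigma|_{H_m}$ controls the class of $q'_{F_m}$ in $A_m$ via Artin reciprocity, producing item~(1);
\item the Kummer extension $\Omega_N(\widetilde W^{1/p^N})/\Omega_N$ with $\Omega_N=F_m(\mu_{p^N})$ and $\widetilde W\subset F_m^\times$ a lift of $W$, so that Kummer duality converts the action of $\sigma$ on this layer into a prescribed $R_{F_m,N,\chi}$-homomorphism on $W$, realising the identity $\bar\phi^{q'}_{F_m,N,\chi}(x)=\lambda(x)$ appearing in item~(3);
\item a similar Kummer extension of $\Omega_N$ encoding local $p^N$-th power classes at each $\ell_i$, arranging that the local reciprocity maps $\phi^{\ell_i}_{F_m,N,\chi}$ attain any prescribed values on a suitable global element.
\end{enumerate}

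Next, I would define $\sigma\in\Gal(L/\bb{Q})$ by independently prescribing its restriction to each of these four layers, and then apply condition (A) to obtain infinitely many primes $q'$ with $(q'_L,L/\bb{Q})=\sigma$. Items~(1) and the bulk of item~(3) are then immediate from layers (ii), (iii), and (i). For item~(2), the class equality from (1) produces some $z_0\in(F_m^\times\otimes\bb{Z}_p)_\chi$ with $(z_0)_{F_m,\chi}=(q'_{F_m}-q_{F_m})_\chi$; one then corrects $z_0$ by an element of $(\mca{O}_{F_m}^\times\otimes\bb{Z}_p)_\chi$ whose local reciprocity images at the $\ell_i$ realise $\tau_i-\phi^{\ell_i}_{F_m,N,\chi}(z_0)$, the existence of such a unit being exactly the surjectivity arranged by layer (iv).

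The main technical obstacle I anticipate is verifying that the four layers of $L$ are sufficiently linearly disjoint in the $\chi$-component over the common base to allow $\sigma$ to be prescribed freely on each piece. This is precisely where the non-triviality of $\chi$ is essential: it eliminates the contribution of $\bb{Q}(\mu_p)$ sitting inside the Kummer layers (iii) and (iv), and it decouples those Kummer pieces from the Hilbert class field and from the splitting condition in (i). Once this disjointness is established at the level of $\chi$-components, the verification of (1)--(3) from the choice of $\sigma$ is a formal consequence of Kummer theory and local class field theory, and (A) supplies the density statement.
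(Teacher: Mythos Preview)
Your overall strategy—build a large Galois extension, prescribe a Frobenius, invoke condition~(A)—matches the paper, and your handling of the splitting condition and of item~(3) via the Kummer layer $\Omega_N(\widetilde W^{1/p^N})$ is essentially the paper's Step~2.

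The gap is in item~(2). You propose to obtain $z_0$ from the Hilbert-class-field layer and then correct it by a global unit to hit the prescribed $\tau_i$. But the existence of such a unit amounts to surjectivity of
\[
(\mca{O}_{F_m}^\times\otimes\bb{Z}_p)_\chi \longrightarrow \bigoplus_{i=1}^r R_{F_m,N,\chi}\otimes (H_{\ell_i}/p^N),
\]
and this map is fixed—independent of $q'$—so no Frobenius choice on any ``layer~(iv)'' can arrange it. Its cokernel is the $\chi$-part of the kernel of the map from the ray class group of conductor $n$ to the class group, tensored with $\bb{Z}_p$; this is nonzero in general and cannot vanish for $r\ge 2$, since the source is generated by one element over $R_{F_m,N,\chi}$ while the target is free of rank $r$. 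The paper avoids this by replacing your layers~(ii) and~(iv) with a single class-field-theoretic layer: the $\chi$-quotient $F_m\{n\}_\chi$ of the maximal abelian $p$-extension of $F_m$ unramified outside $n$. One Frobenius prescription on $F_m\{n\}_\chi$ fixes simultaneously the ideal class of $q'_{F_m}$ and the local residue data at the $\ell_i$, and the required $z$ falls out of the idelic description of $\Gal(F_m\{n\}_\chi/F_m)$ directly, with no unit correction. For the disjointness of $K=F_m\{n\}_\chi\cdot\bb{Q}(\mu_{np^N})$ from the Kummer layer $L$, the decisive mechanism in the paper is complex conjugation rather than non-triviality of $\chi$: $c$ acts trivially on $\Gal(K/\bb{Q}(\mu_{p^N}))$ (the generators over $\bb{Q}(\mu_{p^N})$ are totally real) and by $-1$ on $\Gal(L/\bb{Q}(\mu_{p^N}))$ (the $\Delta$-action there is via the odd character $\chi^{-1}\omega$), giving $K\cap L=\bb{Q}(\mu_{p^N})$. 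Non-triviality of $\chi$ is used separately, to ensure $F_m\{n\}_\chi\cap\bb{Q}(\mu_{np^N})=F_m$.
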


\begin{proof}
We shall prove Proposition \ref{chebo appli} by four steps, using argument as in \cite{Ku}.

{\em The first step. Here, we first define a field $F_m\{n\}_\chi$, which is finite Galois over $F_m$. Then, we take an element $\sigma \in \Gal\big(K/\bb{Q}(\mu_{np^N})\big)$ corresponding to the conditions(1) and (2) of Proposition \ref{chebo appli}, where $K:=F_m\{n\}_\chi \bb{Q}(\mu_{np^N})$ is the composition field.}

Let $v$ be a prime ideal of $F_m$. We denote the ring of integers of the completion $F_{m,v}$ of $F_m$ at $v$ by $\mca{O}_{F_{m,v}}$, and define the subgroup $\mca{O}_{F_{m,v}}^1$ of $\mca{O}_{F_{m,v}}^\times$ by 
$$\mca{O}_{F_{m,v}}^1:=\{x \ |\ x \equiv 1 \pmod{\bar{v}} \},$$
where $\bar{v}$ is the maximal ideal of $\mca{O}_{F_{m,v}}$.
We denote the residue field of $F_m$ at $v$ by $k(v)$.

Let $F_m\{n \}$ be the maximal abelian $p$-extension of $F_m$ unramified outside $n$. By global class field theory, we have the isomorphism
$$\xymatrix{\displaystyle \frac{(\prod_{v|n}F_{m,v}^\times/\mca{O}_{F_{m,v}}^1 )\times (\bigoplus_{u \nmid n}F_{m,u}^\times/\mca{O}_{F_{m,u}}^\times)}{ F_m^\times}\otimes\bb{Z}_p \ar[r]^(0.68){\simeq} & \Gal\big(F_m\{n\}/F_m\big),}$$
where $u$ runs all finite places outside $n$.
This isomorphism induces the homomorphism
$$\xymatrix{\iota\colon\bigoplus_{v|n}k(v)^\times \otimes \bb{Z}_p  \ar[r]& \Gal\big(F_m\{n\}/F_m\big). }$$
Taking the $\chi$-part, we obtain the homomorphism 
$$\xymatrix{\iota_\chi\colon\big(\bigoplus_{v|n} k(v)^\times \otimes \bb{Z}_p \big)_{\chi}  \ar[r] & \Gal\big(F_m\{n\}/F_m\big)_{\chi}}$$
of $\bb{Z}_p[\Gal(F_m/\bb{Q})]_\chi$-modules.
We denote by $F_m\{n\}_\chi$ the intermediate field of $F_m\{n\}/F_m$ with $\Gal\big(F_m\{n\}_\chi/F_m\big)=\Gal(F_m\{n\}/F_m)_\chi$.

Recall $n=\prod_{i=1}^r\ell_i$, and all prime divisors $\ell_i$ of $n$ split completely in $F_m/\bb{Q}$. By local Artin maps, we obtain the isomorphism
$$\xymatrix{\big(\bigoplus_{v|n}k(v)^\times \otimes \bb{Z}_p \big)_{\chi}  \ar[r]^(0.37){\simeq} & \bigoplus_{\i=1}^r\big(\bb{Z}_p[\Gal(F_m/\bb{Q})]_{\chi} \cdot \ell_{i,F_m} \big)\otimes H_{\ell_i}, }$$
and we identify them by this isomorphism. We take an element $$\tau_\chi=(\tau_{\chi,v})_{v|n} \in \big(\bigoplus_{v|n}k(v)^\times \otimes \bb{Z}_p \big)_{\chi}$$ whose image in $\bigoplus_{\i=1}^r(R_{F_m,N,\chi} \cdot \ell_{i,F_m} )\otimes H_{\ell_i}$ is $(\tau_1\cdot\ell_{1,F_m},\dots,\tau_r\cdot\ell_{r,F_m})$.

Let $K:=F_m\{n\}_\chi \bb{Q}(\mu_{p^Nn})$ be the composition field. Since the subgroup $\Delta$ of $\Gal(F_m/\bb{Q})$ acts on $\Gal\big(F_m\{n\}_{\chi}/F_m\big)$ (resp.\ $\Gal\big(F_{N-1}(n)/F_m\big)$) via $\chi$ (resp.\ trivial character) and $\chi$ is non-trivial, we have 
$$F_m\{n\}_{\chi}\cap \bb{Q}(\mu_{np^N}) =F_m\{n\}_{\chi}\cap F_{N-1}(n)=F_m.$$ 
Then, we take the element $\sigma \in \Gal\big(K/\bb{Q}(\mu_{np^N})\big)$ such that 
$$\sigma |_{F_m\{n\}_\chi}=\iota_\chi(\tau_\chi)^{-1}(q_{F_m\{n\}_\chi}, F_m\{n\}_\chi/F_m ).$$

{\em The second step. 
Here, we shall take an element $\lambda' \in \Gal\big(L/\bb{Q}(\mu_{p^N})\big)$ corresponding to the condition (3) of Proposition \ref{chebo appli}, where $L$ is the extension field of $\bb{Q}(\mu_{p^N})$ generated by all $p^N$-th roots of elements contained in $W$}. 

We define a projection $\xymatrix{\mrm{pr}\colon R_{F_m,N} \ar[r] & \bb{Z}/p^N\bb{Z}}$ by 
$$\sum_{g \in \Gal(F_m/\bb{Q})}a_g g \longmapsto  a_1,$$
where $a_g \in \bb{Z}/p^N\bb{Z}$ for all $g \in \Gal(F_m/\bb{Q})$, and $1 \in \Gal(F_m/\bb{Q})$ is the unit. 
We define $\lambda'\in \Hom(W,\mu_{p^N})$ by $x \longmapsto \rho _{N-1}^{\mrm{pr}\circ \lambda(x)}$
for all $x \in W$. (Recall $\rho_{N-1}$ is a primitive $p^N$-th root of unity defined in \S \ref{saisho}.)
We use the following well-known lemma.
\begin{lem}\label{onajimono}
Let 
$\xymatrix{P\colon \Hom_{R_{{F_m},N,\chi}}(W,R_{{F_m},N,\chi}) \ar[r] &  \Hom(W,\bb{Z}/p^N\bb{Z})}$ be the map given by
$f \longmapsto \mrm{pr}\circ f $. 
Then, $P$ is bijective.
\end{lem}
Indeed, the inverse of $P$ is given by 
$$h \longmapsto \bigg(x\longmapsto \sum_{g \in\Gal(F_m/\bb{Q})}h(g^{-1}x)g \bigg) \in \Hom_{R_{{F_m},N,\chi}}(W,R_{{F_m},N,\chi}),$$
for $h \in \Hom(W,\bb{Z}/p^N\bb{Z})$. Note that $\Delta$ acts on $W$ via $\chi$, so we have  $$\Hom_{R_{{F_m},N}}(W,R_{{F_m},N})=\Hom_{R_{{F_m},N,\chi}}(W,R_{{F_m},N,\chi}).$$ 

The natural homomorphism 
$$\xymatrix{W \subset (F_m^\times/p^N)_{\chi} \ar[r] & \big(\bb{Q}(\mu_{p^N})^\times/p^N}\big)_{\chi}$$ is injective since we have 
$H^1\big(\Gal\big(\bb{Q}(\mu_{p^N})/F_m\big), \mu_{p^N}\big)_\chi=0$. 
So, we regard $W$ as a subgroup of $\big(\bb{Q}(\mu_{p^N})^\times/p^N\big)_{\chi}$.
Let $L$ be the extension field of $\bb{Q}(\mu_{p^N})$ generated by all $p^N$-th roots of elements of $F_m^\times$ whose image in $F_m^\times/p^N$ is contained in $W$. 
We consider the Kummer pairing $$\xymatrix{\Gal\big(L/\bb{Q}(\mu_{p^N})\big)\times W \ar[r] & \mu_{p^N}.}$$
This pairing induces the isomorphism $\Hom(W,\mu_{p^N})\simeq \Gal\big(L/\bb{Q}(\mu_{p^N})\big)$. We regard $\lambda'$ as an element of $\Gal\big(L/\bb{Q}(\mu_{p^N})\big)$ by this isomorphism.

{\em The third step. 
Here, we show that $K \cap L= \bb{Q}(\mu_{p^N})$}. 

Since we have the natural isomorphism $$\Gal\big(\bb{Q}(\mu_{p^N})/\bb{Q}\big)\simeq \Gal\big(\bb{Q}(\mu_p)/\bb{Q}\big)\times \Gal\big(\bb{Q}(\mu_{p^N})/\bb{Q}(\mu_p)\big),$$ we regard $\Gal\big(\bb{Q}(\mu_p)/\bb{Q}\big)$ as a subgroup of $\Gal\big(\bb{Q}(\mu_{p^N})/\bb{Q}\big)$. 
Let $c \in \Gal\big(\bb{Q}(\mu_{p})/\bb{Q}\big)$ be the complex conjugation. Namely, $c$ is the unique element of $\Gal\big(\bb{Q}(\mu_{p^N})/\bb{Q}\big)$ of  order $2$.
Since $KL/\bb{Q}(\mu_{p^N})$ is an abelian $p$-extension, we regard $\Gal\big(KL/\bb{Q}(\mu_{p^N})\big)$ as a $\bb{Z}_p[\Gal(\bb{Q}(\mu_{p^N})/\bb{Q})]$-module. We have a decomposition 
$$\Gal\big(KL/\bb{Q}(\mu_{p^N})\big)=\Gal\big(KL/\bb{Q}(\mu_{p^N})\big)_{+}\times\Gal\big(KL/\bb{Q}(\mu_{p^N})\big)_{-} \ ,$$  
where $\Gal\big(KL/\bb{Q}(\mu_{p^N})\big)_{+}$ \ (resp.\ $\Gal\big(KL/\bb{Q}(\mu_{p^N})\big)_{-}$) \ denotes the subgroup of \\
$\Gal\big(KL/\bb{Q}(\mu_{p^N})\big)$ on which $c$ acts trivially (resp.\ by $-1$).

The element $c$ acts on $\Gal\big(K/\bb{Q}(\mu_{p^N})\big)$ trivially since $K$ is the extension field of $\bb{Q}(\mu_{p^N})$ generated by all elements of $F_m\{n\}_\chi F_m(n)$, which is totally real. On the other hand, The element $c$ acts on $\Gal\big(L/\bb{Q}(\mu_{p^N})\big)$ by $-1$ since $\Gal\big(\bb{Q}(\mu_{p})/\bb{Q}\big)$ acts on $\Gal\big(L/\bb{Q}(\mu_{p^N})\big)$ via the character $\chi^{-1}\omega$, where $\omega \in \Hom\big(\Gal\big(\bb{Q}(\mu_{p})/\bb{Q}\big), \mu_{p-1}\big)$ is the Teichm\"uler character. Hence we have $$K \cap L= \bb{Q}(\mu_{p^N}).$$

{\em The fourth step. We shall complete the proof}.
 
By the third step and the condition (A), there exists infinitely many prime numbers $q'$ such that 
$$\begin{cases}
(q'_K,K/\bb{Q})=\sigma \\
(q'_L,L/\bb{Q})=\lambda'. 
\end{cases}$$ 
We shall prove that each of such $q'$ unramified in $L/\bb{Q}$ satisfies conditions (1)-(3) of Proposition \ref{chebo appli}.

First, we show $q'$ satisfies conditions (1) and (2). Let $\alpha=(\alpha_v)_v \in \bb{A}_{F_m}^\times$ be an idele whose $q'_{F_m}$-component is a prime element of $F_{m,q'_{F_m}}$, and other components are $1$. 
Let $\beta=(\beta_v)_v \in \bb{A}_{F_m}^\times$ be an idele such that the $q_{F_m}$-component is a prime element of $F_{m,q_{F_m}}$, the $\prod_{v|n}F_{m,v}^\times$-component is $\tilde\tau_{\chi}^{-1}$ which is a lift of $\tau_{\chi}^{-1} \in \big(\prod_{v|n}k(v)^\times \otimes \bb{Z}_p \big)_{\chi}$ in $\prod_{v|n}\mca{O}_{F_{m,v}}^\times$, and other components are $1$. 
By definition, ideles $\alpha$ and $\beta$ have the same image in $$\bigg( \frac{(\prod_{v|n}F_{m,v}^\times/\mca{O}_{F_{m,v}}^1 )\times (\bigoplus_{u \nmid n}F_{m,u}^\times/\mca{O}_{F_{m,u}}^\times)}{ F_m^\times}\otimes\bb{Z}_p \bigg)_{\chi} \simeq  \Gal\big(F_m\{n\}_\chi/F_m\big).$$
This implies there exist $z \in (F_m^\times\otimes\bb{Z}_p)_\chi$ such that 
$$\alpha=z\beta \ \ \text{in} \  \bigg( \big((\prod_{v|n}F_{m,v}^\times/\mca{O}_{F_{m,v}}^1 )\times (\bigoplus_{u\nmid n}F_{m,u}^\times/\mca{O}_{F_{m,u}}^\times)\big)\otimes\bb{Z}_p\bigg)_\chi. $$
Hence, we have $(z)_{F_{m,\chi}}=(q'_{F_m}-q_{F_m})_\chi,$
and $\phi_{F_m,N,\chi}^{\ell_i}(z)=\tau_i$ for each $i=1,\dots,r$. The prime number $q'$ satisfies conditions (1) and (2).

Next, we shall prove $q'$ satisfies condition (3).
Since $q'$ is unramified in $L/\bb{Q}$, the group $W$ is contained in the kernel of $[\cdot ]_{F_m,N,\chi}^{q'}$.
Since $(q'_L,L/\bb{Q})=\lambda' $, for any $x \in W$, we have
$$\rho_{N-1}^{\mrm{pr} \circ \lambda(x)}=\lambda'(x)
=(x^{1/{p^N}})^{\Frob_{q'}-1},$$
where $\Frob_{q'} \in \Gal(L/\bb{Q})$ is the arithmetic Frobenius at $q'$, and $x^{1/{p^N}}\in L$ is a $p^N$-th root of $x$. 
Then, we obtain $$\rho_{N-1}^{\mrm{pr} \circ \lambda(x)} \equiv x^{(q'-1)/p^N} \pmod{\overline{{q'}_M}}.$$

There is  the (unique) intermediate field $M$ of $F_m(q')/F_m$ whose degree over $F_m$ is $p^N$ since $q'\equiv 1 \pmod{p^N}$. Let $\pi$ be the prime element of $M_{N,{q'}_M}$. By definition of $\sigma_{q'}$, we have 
$$\pi^{\sigma_{q'}-1} \equiv \rho_{N-1} \pmod{\overline{{q'}_M}},$$ where $\overline{{q'}_M}$ is the maximal ideal of $M_{{q'}_M}$.
Recall that $W$ is contained in the kernel of $[\cdot ]_{F_m,N,\chi}^{q'}$. By definition of $\bar{\phi}_{F_m,N,\chi}^{q'}$, we have 
$$ \pi^{\phi (x) -1} \equiv x^{(q'-1)/p^N} \pmod{\overline{{q'}_M}}$$
for all $x \in W$, where we put
$$\phi (x):= \sigma_{q'}^{\mrm{pr}\circ \bar{\phi}_{F_m,N,\chi}^{q'}(x)}.$$
Then, we have $$x^{(q'-1)/p^N} \equiv \rho_{N-1}^{\mrm{pr}\circ \bar{\phi}_{F_m,N,\chi}^{q'}(x)} \pmod{\overline{{q'}_M}}$$
for all $x \in W$.
Hence, we obtain $$\rho_{N-1}^{\mrm{pr} \circ \lambda(x)}= \rho_{N-1}^{\mrm{pr}\circ \bar{\phi}_{F_m,N,\chi}^{q'}(x)}$$
for all $x \in W$.
By Lemma \ref{onajimono}, we have $\lambda=\bar{\phi}_{F_m,N,\chi}^{q'}|_W$. 
Therefore $q'$ satisfies condition (3) of Proposition \ref{chebo appli}, and the proof is complete.
\end{proof}

\section{The cyclotomic ideals}\label{the section of cyclotomic ideals}
Here, we will define the $i$-th cyclotomic ideal $\mf{C}_i$  of $\Lambda$ for each $i \ge 0$ (cf.\ Definition \ref{The $i$-th cyclotomic ideal}). Recall we denote $\bb{Z}/p^N[\Gal(F_m/\bb{Q})]$ by $R_{F_m,N}$. 
First, we fix $m$ and $N$.

\begin{dfn}
For $n\in \mca{N}_N$, we define $W_{F_m,N}^n$ to be the $R_{F_m, N}$-submodule of $F_m^{\times}/p^N$ generated by $\{ \kappa_{m,N}(\xi) \ | \ \xi \in \mu_n  \}$.
\end{dfn}
\begin{dfn}
Recall we put $\epsilon(n):=r$ for $n=\prod_{i=1}^r \ell_i\in\mca{N}_N \ (\ell_i\in\mca{S}_N \ \text{for} \ i=1,\dots,r)$.  
We denote by $\mf{C}_{i,F_m,N}$ the ideal of $R_{F_m, N}$ generated by images of all $R_{F_m,N}$-homomorphisms 
$\xymatrix{f\colon W_{F_m,N}^n \ar[r] & R_{F_m,N}}$, 
where $n$ runs through all elements of $\mca{N}_N$ satisfying $\epsilon(n)\le i$.
\end{dfn}

\begin{rem}
Since we have decomposition $$\Hom_{R_{F_m,N}}(W_{F_m,N}^n, R_{F_m,N})=\bigoplus_{\chi \in \widehat{\Delta}} \Hom_{R_{F_m,N,\chi}}(W_{F_m,N,\chi}^n, R_{F_m,N,\chi}),$$
the $\chi$-part $\mf{C}_{i,F_m,N,\chi}$ of $\mf{C}_{i,F_m,N}$ coincides with the ideal generated by 
$\Im f_\chi$ of $R_{F_m, N,\chi}$, 
where $f_\chi$ runs through all elements of $\Hom_{R_{F_m,N,\chi}}(W_{F_m,N,\chi}^n, R_{F_m,N,\chi})$ for any $n \in\mca{N}_N$ with  $\epsilon(n)\le i$.
\end{rem}

Then, we will define the $i$-th cyclotomic ideal  $\mf{C}_i$ by taking the projective limit of $\{ \mf{C}_{i,F_m,N}\}_{m,N}$. To define it, we need the following lemma.

\begin{lem}
Let $m_1, m_2, N_1, N_2$ be integers satisfying $N_1\ge m_1+1$, $N_2\ge m_2+1$, $m_2 \ge m_1$, and $N_2 \ge N_1$. The image of  $\mf{C}_{i,F_{m_2},N_2}$ in $R_{F_{m_1}, N_1}$ by the natural surjection is contained in $\mf{C}_{i,F_{m_1},N_1}$. 
\end{lem}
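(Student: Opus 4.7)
The plan is to factor the natural surjection $\pi\colon R_{F_{m_2},N_2}\to R_{F_{m_1},N_1}$ through the intermediate ring $R_{F_{m_2},N_1}$ and verify compatibility of the cyclotomic ideals at each stage.  First note that $\mca{N}_{N_2}\subseteq\mca{N}_{N_1}$ since $N_2\ge N_1$, so any $n\in\mca{N}_{N_2}$ with $\epsilon(n)\le i$ is also admissible for generating $\mf{C}_{i,F_{m_1},N_1}$.  The common strategy at each stage is to show that any $R_{F_{m_2},N_2}$-homomorphism $f$, composed with the ring projection, vanishes on the kernel of the corresponding natural surjection on $W$, so descends to an $R$-homomorphism on the target module.

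For the reduction-of-$N$ stage (with $m=m_2$ fixed and $N_2\to N_1$), the quotient $F_{m_2}^\times/p^{N_2}\twoheadrightarrow F_{m_2}^\times/p^{N_1}$ sends $\kappa_{m_2,N_2}(\xi)$ to $\kappa_{m_2,N_1}(\xi)$ by Definition~\ref{kappa}, yielding a surjective $R_{F_{m_2},N_2}$-linear map $q\colon W_{F_{m_2},N_2}^n\twoheadrightarrow W_{F_{m_2},N_1}^n$.  For $w\in\ker q$, one has $p^{N_2-N_1}w=0$ in $F_{m_2}^\times/p^{N_2}$ (since $w$ is a $p^{N_1}$-th power there), so by $R$-linearity $p^{N_2-N_1}f(w)=0$, forcing $f(w)\in p^{N_1}R_{F_{m_2},N_2}=\ker(R_{F_{m_2},N_2}\twoheadrightarrow R_{F_{m_2},N_1})$.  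Thus $\pi\circ f$ descends to an $R_{F_{m_2},N_1}$-homomorphism $f'$ on $W_{F_{m_2},N_1}^n$ and $\pi(f(w))=f'(q(w))\in\mf{C}_{i,F_{m_2},N_1}$.

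For the reduction-of-$m$ stage (with $N=N_1=N_2$ fixed and $m_2\to m_1$), iterating Lemma~\ref{norm}(2) on the representative $\mathrm{cyc}(\rho_{m_2}\xi)^{D_n}$ and using that $D_n$ commutes with norms yields $N_{F_{m_2}/F_{m_1}}(\kappa_{m_2,N}(\xi))=\kappa_{m_1,N}(\xi^{p^{m_2-m_1}})$ in $F_{m_1}^\times/p^N$; since $\gcd(p,n)=1$, the map $\xi\mapsto\xi^{p^{m_2-m_1}}$ is a bijection of $\mu_n$, so this produces a surjective $R_{F_{m_2},N}$-linear (through $\pi$) map $\bar N\colon W_{F_{m_2},N}^n\twoheadrightarrow W_{F_{m_1},N}^n$.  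The main technical point is the characterization of $\ker\bar N$: for $p$ odd, $\mu_{p^N}(F_{m_2})$ is trivial, so Hilbert~90 gives $(F_{m_2}^\times/p^N)^G=F_{m_1}^\times/p^N$ with $G=\Gal(F_{m_2}/F_{m_1})$, and hence $\bar N(w)=0$ is equivalent to $N_G\cdot w=0$ in $F_{m_2}^\times/p^N$, where $N_G=\sum_{\sigma\in G}\sigma\in R_{F_{m_2},N}$ is the norm element.  By $R_{F_{m_2},N}$-linearity, $N_G\cdot f(w)=0$, and since $R_{F_{m_2},N}$ is free over $\bb{Z}/p^N[G]$, the annihilator of $N_G$ in $R_{F_{m_2},N}$ equals $I_GR_{F_{m_2},N}=\ker\pi$, forcing $\pi(f(w))=0$.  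Hence $\pi\circ f$ factors through $\bar N$ as some $f'\colon W_{F_{m_1},N}^n\to R_{F_{m_1},N}$, placing $\pi(f(w))=f'(\bar N(w))\in\mf{C}_{i,F_{m_1},N}$; combining the two stages yields the lemma.
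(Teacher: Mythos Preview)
Your proof is correct and follows the same overall strategy as the paper: reduce to the two elementary steps $(m,N+1)\to(m,N)$ and $(m+1,N)\to(m,N)$, and for the latter use the norm map $W_{F_{m+1},N}^n\twoheadrightarrow W_{F_m,N}^n$ coming from Lemma~\ref{norm}(2).

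The execution of the $m$-step differs slightly. The paper constructs the descended homomorphism explicitly: it identifies $R_2^{G}$ with $R_1$ via the isomorphism $\varphi$ sending $\bar\sigma\mb{n}\mapsto\sigma$ (where $\mb{n}=N_G$), sets $f_1:=\varphi\circ f_2\circ\iota$ on $W_{F_m,N}^n$, and checks the identity $\mrm{pr}\circ f_2=f_1\circ N_{F_{m+1}/F_m}$ directly. You instead argue abstractly that $\mrm{pr}\circ f$ kills $\ker\bar N$, using Hilbert~90 (with $\mu_{p^N}(F_{m_2})=1$) to identify $\ker\bar N$ with the $N_G$-torsion, and then the group-ring fact $\ann_{R_2}(N_G)=I_GR_2=\ker\mrm{pr}$. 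These are dual formulations of the same algebraic content: the paper's section $\varphi$ encodes precisely that multiplication by $N_G$ on $R_2$ has image $R_2^G\simeq R_1$ and kernel $I_GR_2$. The paper's version is a bit more concrete; yours makes the role of the augmentation ideal transparent and avoids writing down $\varphi$.
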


\begin{proof}
It is sufficient to show the lemma in the following two cases: 
\begin{enumerate}
\item $(m_2,N_2)=(m_1, N_1+1)$;
\item $(m_2,N_2)=(m_1+1, N_1)$.
\end{enumerate}

In the case (1), our lemma is clear. We shall show the lemma in the case (2).

We put $m=m_1$, $N=N_1=N_2$, $R_1= R_{F_{m}, N}$, $R_2= R_{F_{m+1}, N}$, and the natural surjection $\xymatrix{\mrm{pr}\colon R_2 \ar[r] & R_1}$.
We show the following claim:
\begin{claim}
For $f_2 \in \Hom_{R_2}(W_{F_{m+1},N}^n, R_2)$, there exists a homomorphism $f_1 \in \Hom_{R_2}(W_{F_{m+1},N}^n, R_1)$ such that $\Im f_1 = \Im (\mrm{pr} \circ f_2).$
\end{claim}

For each elements $\sigma \in \Gal (F_m/\bb{Q})$, we fix a lift $\bar{\sigma} \in \Gal (F_{m+1}/\bb{Q})$ of $\sigma$. We have 
$$R_2^{\Gal(F_{m+1}/F_m)}=\{\sum_{\sigma \in \Gal (F_m/\bb{Q})}a_\sigma \bar{\sigma} \mb{n} \ | \ a_\sigma \in \bb{Z}/p^N \},$$
where $\mb{n}$ is an element of $R_2$ defined by 
$$\mb{n}:=\sum_{\tau \in \Gal (F_{m+1}/F_m)}\tau.$$
We define the isomorphism $\xymatrix{\varphi \colon  R_2^{\Gal(F_{m+1}/F_m)} \ar[r] & R_1}$ of $R_1$-modules by 
$$\sum_{\sigma \in \Gal (F_m/\bb{Q})}a_\sigma \bar{\sigma} \mb{n} \longmapsto \sum_{\sigma \in \Gal (F_m/\bb{Q})}a_\sigma \sigma.$$ Let $\xymatrix{\iota\colon F_{m}^{\times}/p^N \ar[r] & F_{m+1}^{\times}/p^N}$ be the canonical homomorphism. Since $F_m$ is totally real, the homomorphism $\iota$ is injective.  We have $$\mrm{pr}\circ f_2 = \varphi \circ f_2 \circ \iota \circ N_{F_{m+1}/F_m},$$
where $\xymatrix{N_{F_{m+1}/F_m}\colon F_{m+1}^{\times}/p^N \ar[r] &  F_{m+1}^{\times}/p^N}$ is induced by the norm map.
Note that it follows from Lemma \ref{norm} that we have $$N_{F_{m+1}/F_m}(W_{F_{m+1},N}^1)=W_{F_m,N}^1.$$
If we put $f_1 = \varphi \circ f_2 \circ \iota$, then we have $$\Im f_1 = \Im (\mrm{pr} \circ f_2).$$
Thus we have proved the claim.
Our lemma follows from the claim immediately.
\end{proof}

\begin{dfn}[The $i$-th cyclotomic ideal]\label{The $i$-th cyclotomic ideal}
We define {\em the $i$-th cyclotomic ideal}  $\mf{C}_i$ to be the ideal of $\Lambda$ such that $\mf{C}_i:=\plim\mf{C}_{i,F_m,N},$
where the projective limit is taken with respect to the system of the natural homomorphisms 
$\xymatrix{
\mf{C}_{i,F_{m_2},N_2} \ar[r] & \mf{C}_{i,F_{m_1},N_1} \\
}$
for integers $m_1, m_2, N_1, N_2$ satisfying $N_1\ge m_1+1$, $N_2\ge m_2+1$, $m_2 \ge m_1$ and $N_2 \ge N_1$.
\end{dfn}

We shall prove Proposition \ref{size}, which is a proposition on the size of $\mf{C}_0$. 
\begin{prop}
\label{size}
Let $\chi$ be a non-trivial character in $\widehat{\Delta}$. Then, 
$$\ann_{\Lambda_{\chi}}(X_{\mrm{fin},\chi})\cha_{\Lambda_{\chi}}\big( (\overline{\mca{O}_{\infty}^1}/\overline{C_{\infty}^1})_{\chi} \big) \subseteq \mf{C}_{0,\chi} \subseteq \cha_{\Lambda_{\chi}}\big( (\overline{\mca{O}_{\infty}^1}/\overline{C_{\infty}^1})_{\chi}\big).$$
\end{prop}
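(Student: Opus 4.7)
The plan is to identify $\mf{C}_{0,F_m,N,\chi}$ at each finite level as a double-annihilator ideal in the local group ring $R:=R_{F_m,N,\chi}$, and then to compare it with $\bar f_{m,N}R$, where $f\in\Lambda_\chi$ generates the characteristic ideal of $(\overline{\mca{O}_\infty^1}/\overline{C_\infty^1})_\chi$.

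First I would observe that $W_{F_m,N,\chi}^{1}=R\cdot\mathrm{cyc}(\rho_m)_\chi$ is cyclic (for $n=1$ the derivative $\kappa_{m,N}(\xi)$ at $\xi=1$ reduces to $\mathrm{cyc}(\rho_m)\bmod p^N$ via the empty-product convention $D_1=1$), so $W_{F_m,N,\chi}^{1}\simeq R/J$ with $J:=\ann_R(\mathrm{cyc}(\rho_m)_\chi)$, whence $\mf{C}_{0,F_m,N,\chi}=(0:_R J)$ because $\Hom_R(R/J,R)\simeq(0:_R J)$ and the image ideal of all such homomorphisms is precisely $(0:_R J)$. Next, using Proposition \ref{modules} and Corollary \ref{gene.cyc.lim}, I write $(\overline{\mca{O}_\infty^1})_\chi=\Lambda_\chi e$ and $u\cdot(\mathrm{cyc}(\rho_m))_m|_\chi=f\cdot e$, so that $\cha_{\Lambda_\chi}((\overline{\mca{O}_\infty^1}/\overline{C_\infty^1})_\chi)=f\Lambda_\chi$. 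For non-trivial $\chi$, Corollary \ref{coinv} kills the trivial-action kernels and, after reducing modulo $p^N$, produces an isomorphism $N_\infty(\overline{\mca{O}_{F_m}^1})_\chi/p^N\simeq R$ of free $R$-modules of rank one under which $u\cdot\mathrm{cyc}(\rho_m)_\chi\mapsto\bar f_{m,N}\cdot\bar e_{m,N}$. Set $J_0:=(0:_R\bar f_{m,N})$; clearly $J_0\subseteq J$.

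Conversely, applying the snake lemma with $\cdot p^N$ to the short exact sequence of Proposition \ref{delta no moto},
\[
0\to N_\infty(\overline{\mca{O}_{F_m}^1})_\chi\to(\overline{\mca{O}_{F_m}^1})_\chi\to(X^{\Gamma_m})_\chi\to 0,
\]
and noting that these unit groups are $\bb{Z}_p$-torsion-free in the $\chi$-part, the kernel of $N_\infty(\overline{\mca{O}_{F_m}^1})_\chi/p^N\to(\overline{\mca{O}_{F_m}^1})_\chi/p^N\hookrightarrow(F_m^\times/p^N)_\chi$ is exactly $(X^{\Gamma_m})_\chi[p^N]\subseteq X_{\mrm{fin},\chi}$, yielding
\[
\ann_{\Lambda_\chi}(X_{\mrm{fin},\chi})\cdot J\subseteq J_0.
\]
The upper bound then follows from Gorensteinness: $R\simeq(\bb{Z}/p^N)[\Gal(F_m/F_0)]$ is the group algebra of a finite abelian $p$-group over the Gorenstein Artinian ring $\bb{Z}/p^N$, hence itself Gorenstein and self-injective, so double-annihilator reflexivity gives $(0:_R(0:_R\bar f_{m,N}))=\bar f_{m,N}R$. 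Combined with $J_0\subseteq J$ this yields $\mf{C}_{0,F_m,N,\chi}=(0:_R J)\subseteq(0:_R J_0)=\bar f_{m,N}R$, and the inverse limit over $(m,N)$ gives $\mf{C}_{0,\chi}\subseteq f\Lambda_\chi$. For the lower bound, given $a\in\ann_{\Lambda_\chi}(X_{\mrm{fin},\chi})$ with image $\bar a\in R$, the inclusion $\bar aJ\subseteq J_0$ gives $\bar a\bar f_{m,N}\cdot J=\bar f_{m,N}(\bar aJ)\subseteq\bar f_{m,N}\cdot J_0=0$, so $\bar a\bar f_{m,N}\in(0:_R J)=\mf{C}_{0,F_m,N,\chi}$; passing to the limit yields $\ann_{\Lambda_\chi}(X_{\mrm{fin},\chi})\cdot f\Lambda_\chi\subseteq\mf{C}_{0,\chi}$.

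The main obstacle I expect is the snake-lemma comparison $\ann_{\Lambda_\chi}(X_{\mrm{fin},\chi})\cdot J\subseteq J_0$: it quantifies how much the annihilator of $\mathrm{cyc}(\rho_m)_\chi$ in the possibly non-free module $(\overline{\mca{O}_{F_m}^1})_\chi/p^N$ can exceed the annihilator in the free module $N_\infty(\overline{\mca{O}_{F_m}^1})_\chi/p^N$, and precisely this discrepancy is the source of the unavoidable error factor $\ann_{\Lambda_\chi}(X_{\mrm{fin},\chi})$ in the lower bound. Once this comparison is secured, the upper bound is a formal consequence of Matlis duality for the Gorenstein local rings $R_{F_m,N,\chi}$.
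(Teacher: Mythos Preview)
Your proof is correct and follows essentially the same approach as the paper: both use the snake lemma on $0 \to N_\infty(\overline{\mca{O}_{F_m}^1})_\chi \to (\overline{\mca{O}_{F_m}^1})_\chi \to (X^{\Gamma_m})_\chi \to 0$ to bound the discrepancy between the two annihilators by $\ann_{\Lambda_\chi}(X_{\mrm{fin},\chi})$ (this is exactly the paper's Lemma~\ref{delta kowaza}), and both use self-injectivity of $R_{F_m,N,\chi}$ for the upper bound. The only difference is packaging: you phrase everything via double-annihilator identities $(0:_R J)\subseteq(0:_R J_0)=\bar f_{m,N}R$ in the Gorenstein local ring $R_{F_m,N,\chi}$, while the paper extends a given $f\colon W^1_{F_m,N,\chi}\to R_{F_m,N,\chi}$ to $(\mca{O}_{F_m}^\times/p^N)_\chi$ using injectivity and then factors through the isomorphism $\bar\varphi_{F_m,N,\chi}$ via a commutative diagram.
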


\begin{proof}
First, we will prove that $\mf{C}_{0,\chi}$ contains $\ann_{\Lambda_{\chi}}(X_{\mrm{fin},\chi})\cha_{\Lambda_{\chi}}\big( (\overline{\mca{O}_{\infty}^1}/\overline{C_{\infty}^1})_{\chi} \big)$ for a non-trivial character $\chi \in \widehat\Delta$. 
 By Proposition \ref{modules} (2), we can take an isomorphism $\xymatrix{\varphi\colon \overline{\mca{O}_{\infty}^1} \ar[r]^(0.58){\simeq} & \Lambda}$, and by Corollary \ref{coinv}, it induces an isomorphism 
$$\xymatrix{\bar{\varphi}_{F_m,N,\chi}\colon \big(N_{\infty}(\mca{O}_{F_m}^\times)/p^N\big)_\chi = \big(N_{\infty}(\mca{O}_{F_m}^1)/p^N\big)_\chi \ar[r]^(0.78){\simeq} & R_{F_m, N,\chi}. }$$  
It follows from Corollary \ref{gene.cyc.lim} that the image of $C_{F_m}^1$ in $F_m^{\times}/p^N$ coincides with $W_{F_m,N}^1$. 
It is sufficient to show that for any $\delta \in \ann_{\Lambda_\chi}(X_{\mrm{fin},\chi})$,
$$\delta \bar{\varphi}_{F_m,N,\chi} \big( \text{the image of } (\overline{C_{F_m}^1})_\chi \big) \subseteq  \psi(W_{F_m,N}^1)$$
for a homomorphism $\psi\in \Hom_{R_{F_m,N,\chi}}(W_{F_m,N,\chi}^1, R_{F_m,N,\chi})$ since this means that the image of $\mf{C}_{0,\chi}$ in $R_{F_m,N,\chi}$ contains the image of $\ann_{\Lambda_{\chi}}(X_{\mrm{fin},\chi})\cha_{\Lambda_{\chi}}\big( (\overline{\mca{O}_{\infty}^1}/\overline{C_{\infty}^1})_{\chi}\big)$ in $R_{F_m,N,\chi}$. We show a stronger assertion. 
\begin{claim}
Let $\mca{NO}_{F_m,N,\chi}$ be the image of the natural homomorphism $$\xymatrix{\big(N_{\infty}(\mca{O}_{F_m}^\times)/p^N\big)_\chi \ar[r] & \big(\mathcal{O}_{F_m}^\times/p^N\big)_\chi \subset \big(F_m^{\times}/p^N\big)_\chi.}$$
There exists a homomorphism  $\xymatrix{\psi\colon \mca{NO}_{F,N} \ar[r] & R_{F_m,N}}$ which makes the diagram
$$\xymatrix{
 \big(C_{F_m}^1/p^N\big)_\chi \ar[r] \ar@{->>}[d] & \big(N_{\infty}(\mca{O}_{F_m}^1)/p^N\big)_\chi \ar[rr]^(0.6){\delta \bar{\varphi}_{F_m,N,\chi}}  \ar[d] & & R_{F_m,N,\chi} \\
W_{F_m,N,\chi}^1  \ar@{^{(}->}[r] & (\mca{NO}_{F_m,N})_\chi \ar@{-->}[rru]_{\psi}  & \\
}
$$ 
commute.
\end{claim} 
This claim immediately follows from the following lemma.
\begin{lem}
\label{delta kowaza}
Let $\delta$ be an element of $\ann_{\Lambda_\chi}(X_{\mrm{fin},\chi})$. For any homomorphism 
$$\xymatrix{f\colon \big(N_{\infty}(\mca{O}_{F_m}^\times)/p^N\big)_\chi  \ar[r] & R_{F_m, N,\chi}}$$
of $R_{F_m, N,\chi}$-modules, there exists a homomorphism 
$\xymatrix{g\colon \mca{NO}_{F,N,\chi} \ar[r] & R_{F_m,N,\chi}}$ which makes the diagram 
$$\xymatrix{
\big(N_{\infty}(\mca{O}_{F_m}^1)/p^N)_\chi \ar[r]^(0.6){\delta f}  \ar[d] &  R_{F_m,N,\chi} \\
\mca{NO}_{F_m,N,\chi} \ar@{-->}[ru]_{g}  & \\
}
$$ 
commute.
\end{lem} 

\begin{proof}[Proof of Lemma \ref{delta kowaza}]
We consider the following commutative diagram
$$\xymatrix{
0 \ar[r] & \big( N_{\infty}(\mca{O}_{F_m}^\times)\otimes\bb{Z}_p \big)_\chi \ar[r] \ar[d]^{\times p^N} & \big(\mca{O}_{F_m}^\times\otimes\bb{Z}_p \big)_\chi \ar[r] \ar[d]^{\times p^N} & \big( (\mca{O}_{F_m}^\times/ N_{\infty}(\mca{O}_{F_m}^\times) )\otimes\bb{Z}_p  \big)_\chi  \ar[r] \ar[d]^{\times p^N} & 0 \\
0 \ar[r] & \big( N_{\infty}(\mca{O}_{F_m}^\times)\otimes\bb{Z}_p \big)_\chi \ar[r]  & \big(\mca{O}_{F_m}^\times\otimes\bb{Z}_p \big)_\chi \ar[r]  & \big( (\mca{O}_{F_m}^\times/ N_{\infty}(\mca{O}_{F_m}^\times))\otimes\bb{Z}_p  \big)_\chi \ar[r]  & 0, \\
}
$$
where two rows are exact, and all vertical arrows $\times p^N$ are the homomorphisms taking $p^N$-th power. 
Applying the snake lemma, we find that the kernel of the natural homomorphism 
$\xymatrix{\big(N_{\infty}(\mca{O}_{F_m}^\times)/p^N\big)_\chi \ar[r] & (\mca{O}_{F_m}^\times/p^N)_\chi}$ 
is a subquotient of the module $$\big( (\mca{O}_{F_m}^\times/ N_{\infty}(\mca{O}_{F_m}^\times))\otimes\bb{Z}_p  \big)_\chi\simeq \big(\overline{\mca{O}_{F_m}^1}/N_{\infty}(\overline{\mca{O}_{F_m}^1}) \big)_\chi,$$ 
which is annihilated by $\delta$ by Corollary \ref{delta}. This implies Lemma \ref{delta kowaza}.
\end{proof}

Next, we will prove that $\mf{C}_{0,\chi}$ is contained in $\cha_{\Lambda_{\chi}}\big( (\overline{\mca{O}_{\infty}^1}/\overline{C_{\infty}^1})_{\chi}\big)$ for a non-trivial character $\chi \in \widehat\Delta$. 
Note that $R_{F_m,N,\chi}$ is  an injective $R_{F_m,N,\chi}$-module since the $R_{F_m,N,\chi}$-module $\Hom_{\bb{Z}}(R_{F_m,N,\chi},\bb{Q}/\bb{Z})$ is injective and free of rank $1$.
Let $\xymatrix{f\colon W_{F_m,N,\chi}^1 \ar[r] & R_{F_m,N,\chi}}$ be an $R_{F_m,N, \chi}$- homomorphism.
Since $R_{F_m,N,\chi}$ is an injective $R_{F_m,N,\chi}$-module, there exists a homomorphism $\xymatrix{\tilde{f} \colon \big( \mca{O}^{\times}_{F_m}/p^N \big)_{\chi} \ar[r] & R_{F_m,N,\chi}}$
whose restriction to $W_{F_m,N,\chi}^1$ coincides with $f$. 
Then, we have an element $a \in R_{F,N,\chi}$  which makes the following diagram
$$\xymatrix{
 \big( C_{F_m}^1/p^N \big)_{\chi} \ar[r] \ar@{->>}[rdd]  & \big( N_{\infty}(\mca{O}^1_{F_m})/p^N \big)_{\chi} \ar[rr]_(0.62){\simeq }^(0.6){\bar{\varphi}_{F_m,N,\chi}} \ar[d] &  & R_{F,N,\chi} \ar@{.>}[d]^{\times a}  \\
 & \big( \mca{O}^{\times}_{F_m}/p^N \big)_{\chi} \ar@{-->}[rr]^(0.55){\tilde{f}} &  & R_{F,N,\chi}   \\
 & W_{F_m,N,\chi}^1 \ar@{^{(}->}[u] \ar[urr]_{f} &  &   \\
}$$ 
commute, where $\times a$ is the homomorphism multiplying $a$. This diagram implies that $f(W_{F_m,N,\chi}^1)$ is contained in the image of $\cha_{\Lambda_{\chi}}\big( (\overline{\mca{O}_{\infty}^1}/\overline{C_{\infty}^1})_{\chi} \big)$ in $R_{F_m,N,\chi}$.
Therefore, we obtain $\mf{C}_{0,\chi} \subseteq \cha_{\Lambda_{\chi}}\big( (\overline{\mca{O}_{\infty}^1}/\overline{C_{\infty}^1})_{\chi} \big).$
\end{proof}

Theorem \ref{Main theorem} for $i=0$ follows from Proposition \ref{size} and the Iwasawa main conjecture.
\begin{cor}[Theorem \ref{Main theorem} for $i=0$]\label{i=0}
Let $\chi $ be a non-trivial character in $\widehat \Delta$. 
\begin{enumerate}
\item $\mf{C}_{0,\chi} \subseteq \Fitt_{\Lambda_\chi, 0}(X'_{\chi})$.
\item $\ann_{\Lambda_\chi}(X_{\mrm{fin}, \chi}) \Fitt_{\Lambda_\chi, 0}(X'_{\chi})
\subseteq \mf{C}_{0,\chi}$. 
\end{enumerate}
\end{cor}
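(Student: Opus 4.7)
The plan is to combine Proposition \ref{size} with the Iwasawa main conjecture and a short structural observation about $X'_\chi$. The only nontrivial computation is already packaged inside Proposition \ref{size}; the corollary is a formal consequence.

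First, I would verify that $X'_\chi$ contains no nontrivial pseudo-null $\Lambda_\chi$-submodule. Indeed, by definition $X' = X/X_{\mrm{fin}}$, and $X_{\mrm{fin}}$ is the largest pseudo-null $\Lambda$-submodule of $X$, so the same property is inherited after taking the $\chi$-part (a pseudo-null $\Lambda_\chi$-submodule of $X'_\chi$ would lift to a pseudo-null $\Lambda$-submodule of $X$ not contained in $X_{\mrm{fin}}$). Since $\Lambda_\chi \simeq \bb{Z}_p[[T]]$, Lemma \ref{no pn} applies to $X'_\chi$, yielding
\[
\Fitt_{\Lambda_\chi,0}(X'_\chi) = \cha_{\Lambda_\chi}(X'_\chi).
\]

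Next, since $X_\chi$ and $X'_\chi$ lie in the same pseudo-isomorphism class, $\cha_{\Lambda_\chi}(X'_\chi) = \cha_{\Lambda_\chi}(X_\chi)$. Invoking the Iwasawa main conjecture for the character $\chi$ (which is stated in \S 2.1 and is permitted by the introduction's remark that we use IMC as an input, not reprove it), we obtain
\[
\Fitt_{\Lambda_\chi,0}(X'_\chi) = \cha_{\Lambda_\chi}\big((\overline{\mca{O}_\infty^1}/\overline{C_\infty^1})_\chi\big).
\]

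Substituting this identity into the two-sided containment of Proposition \ref{size} gives
\[
\ann_{\Lambda_\chi}(X_{\mrm{fin},\chi})\,\Fitt_{\Lambda_\chi,0}(X'_\chi) \;\subseteq\; \mf{C}_{0,\chi} \;\subseteq\; \Fitt_{\Lambda_\chi,0}(X'_\chi),
\]
which is exactly the two assertions of the corollary. Because every ingredient is already in place, there is no genuine obstacle here; the only point requiring a moment of care is the observation that $X'_\chi$ has no nontrivial pseudo-null submodule, which is what unlocks the use of Lemma \ref{no pn} to convert the characteristic ideal into the zeroth Fitting ideal.
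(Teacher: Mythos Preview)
Your proposal is correct and follows essentially the same approach as the paper: the paper states that the corollary ``follows from Proposition \ref{size} and the Iwasawa main conjecture,'' and your argument spells out exactly those steps, using Lemma \ref{no pn} to identify $\Fitt_{\Lambda_\chi,0}(X'_\chi)$ with $\cha_{\Lambda_\chi}(X'_\chi)$ (the paper makes this same identification explicitly in \S 7.1 and Remark \ref{equivalence between Th and IMC}). One minor phrasing quibble: a nonzero pseudo-null submodule of $X'_\chi$ does not literally ``lift'' to $X$; rather, its preimage in $X_\chi$ is an extension of it by $X_{\mrm{fin},\chi}$, hence still pseudo-null and strictly larger than $X_{\mrm{fin},\chi}$, contradicting maximality---but the conclusion is the same.
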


Note that 
Corollary \ref{i=0} is a restatement of the Iwasawa main conjecture in terms of 0-th cyclotomic ideal. Indeed, we can obtain $\cha_{\Lambda_\chi}(X_{\chi})$ from this Corollary.

\begin{rem}\label{0-layer}
Here, we remark on the structure of $A_0$. 
Using the Kolyvagin derivatives and the Euler system arguments, Rubin determined the structure of $A_0$ completely. 
(See \cite{Ru2} for the minus-part version of this result.) 
We can show that our $\mf{C}_{i,F_0,N,\chi}$ is equal to the ideal of $R_{F_0,N,\chi}$ generated by $p^{\partial(i,N,\chi)}$, 
where $\partial(i,N,\chi)$ is the largest integer satisfying 
$$\kappa_{0,N}(\xi_n)_\chi \in \bigg( \big( F_0^\times/(F_0^\times)^{p^N} \big)_\chi\bigg)^{p^{\partial(i,N,\chi)}}$$
for any $n$ with $\epsilon(n) \le i$.    
Hence, when $\chi \in \widehat\Delta$ is non-trivial and $N$ is sufficiently large, 
Rubin's result can be described as 
$$A_{0,\chi} \simeq \bigoplus_{i \ge 0}R_{F_0,N,\chi}/p^{\partial(i,N,\chi)-\partial(i+1,N,\chi)} \simeq \bigoplus_{i \ge 0}\mf{C}_{i+1,F_0,N,\chi}/\mf{C}_{i,F_0,N,\chi}$$ in our notation. 
Equivalently, we have $\Fitt_{R_{F_0,N,\chi},i}(A_0)= \mf{C}_{i,F_0,N,\chi}$ for all $i \ge 0$, non-trivial $\chi \in \widehat\Delta$ and sufficiently large $N$.
We also remark that Mazur and Rubin proved a general theorem in \cite{MR} Theorem 4.5.9, which contains the above result as a special case. 
\end{rem}

\section{Proof of the main theorem}

In this section, we prove Theorem \ref{Main theorem}. Our argument is almost parallel to \cite{Ku} \S 9, but we have to treat the pseudo-null-part $X_{\rm{fin}}$ of $X$ and the unit group $\mca{O}_{F_m}^\times$ carefully.

First, we recall the notation and the statement of the theorem. The $\Lambda$-module $X$ is defined by $X:= \plim A_m$, where $A_m$ is the $p$-Sylow subgroup of the ideal class group of $F_m$ and the projective limit is taken with respect to the norm map.
The $\Lambda$-module $X'$ is defined by  $X':=X/X_{\mrm{fin}}$, where $X_{\mrm{fin}}$ is the maximal pseudo-null $\Lambda$-submodule of $X$. Our main theorem is as follows.
\begin{thm}[Theorem \ref{Main theorem}]\label{main theorem}
Let $\chi $ be a non-trivial character in $\widehat \Delta$. 
\begin{enumerate}
\item $\mf{C}_{0,\chi} \subseteq \Fitt_{\Lambda_\chi, 0}(X'_{\chi})$.
\item $\ann_{\Lambda_\chi}(X_{\mrm{fin}, \chi})\Fitt_{\Lambda_\chi, i}(X'_{\chi})\subseteq \mf{C}_{i, \chi}$ for  $i \ge 0$.
\end{enumerate} 
\end{thm}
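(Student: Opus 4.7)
The plan is to reduce Theorem~\ref{main theorem}(2) to a statement at each finite level $(m,N)$ and to prove the finite-level statement by induction on $i$, using Proposition~\ref{chebo appli} as the engine (the cyclotomic-unit analogue of \cite{Ku} Lemma~9.1). Part~(1) is exactly Corollary~\ref{i=0}(1), so I focus on part~(2). Since $\mf{C}_{i,\chi} = \plim_{m,N} \mf{C}_{i, F_m, N, \chi}$ and Fitting ideals commute with base change, it suffices to show, for each $m$ and each $N \ge m+1$, that the image of $\delta\alpha$ in $R_{F_m, N, \chi}$ lies in $\mf{C}_{i, F_m, N, \chi}$ whenever $\delta \in \ann_{\Lambda_\chi}(X_{\mrm{fin},\chi})$ and $\alpha \in \Fitt_{\Lambda_\chi, i}(X'_\chi)$. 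Since $X'_\chi$ carries no non-trivial pseudo-null submodule, Lemma~\ref{no pn} supplies a square presentation $0 \to \Lambda_\chi^n \to \Lambda_\chi^n \to X'_\chi \to 0$, so I may assume $\alpha$ is an $(n-i)\times(n-i)$ minor of the presentation matrix $A$; reducing to $R_{F_m,N,\chi}$, the matrix $A_{m,N}$ presents a finite quotient of $A_{m,\chi}/p^N$ by Proposition~\ref{ideal class} combined with $X \twoheadrightarrow X'$.

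The next step is to translate $A_{m,N}$ into the arithmetic of $(F_m^\times/p^N)_\chi$. I select primes $q, q_1, \ldots, q_{n-1}$ of $F_m$ lying in $\mca{S}_N(F_m)$ whose classes generate this quotient (using the Chebotarev condition~(A)); the columns of $A_{m,N}$ are then described by elements $x_j \in (F_m^\times/p^N)_\chi$ read off by the valuation maps $[\,\cdot\,]^{q_k}_{F_m,N,\chi}$. A fixed $(n-i)\times(n-i)$ minor of $A_{m,N}$ is therefore realizable as $\bar\phi^q_{F_m,N,\chi}(x)$ for some $x \in (F_m^\times/p^N)_\chi$, provided $x$ can be arranged to lie in the Kolyvagin submodule $W^{q\nu}_{F_m,N}$ with $\nu = \ell_1 \cdots \ell_{i-1}$; this is exactly where Kurihara's element $x_{\nu, q}$ enters.

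The induction on $i$ proceeds as follows. The base case $i=0$ is Corollary~\ref{i=0}(2). For the inductive step, the task is to construct $x_{\nu, q} \in W^{q\nu}_{F_m,N,\chi}$ with $\nu = \ell_1 \cdots \ell_{i-1}$ and coefficients $\bar a_{\ell_k} \in R_{F_m,N,\chi}$ so that $\bar\phi^q_{F_m,N,\chi}(x_{\nu,q}) = \delta\alpha$. The primes $\ell_1, \ldots, \ell_{i-1}$ and the coefficients are produced one by one: at stage $r$, with $\ell_1, \ldots, \ell_{r-1}$ and $\bar a_{\ell_1}, \ldots, \bar a_{\ell_{r-1}}$ already fixed, I apply Proposition~\ref{chebo appli} with $W$ equal to the $R_{F_m,N,\chi}$-span of the Kolyvagin derivatives already built, $\lambda$ the homomorphism dictated by the next row of $A_{m,N}$, and each $\tau_k$ tuned so that Proposition~\ref{chebo appli}(2) realizes the required $\bar a_{\ell_k}$ via the local reciprocity map. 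The recursion formula $\bar\phi^\ell_{F_m,N,\chi}(x_{\nu, q}) = \bar a_\ell\,\bar\phi^\ell_{F_m,N,\chi}(x_{\nu/\ell, q})$ of Proposition~\ref{[] and phi and x}(3) propagates the choices, and at the final stage $\bar\phi^q_{F_m,N,\chi}(x_{\nu,q}) = \delta\alpha$, which lies in $\mf{C}_{i, F_m, N, \chi}$ since $\epsilon(q\nu) = i$.

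The principal obstacle is the bookkeeping: one has to verify that the element coming out of the iterated construction really lies in $W^{q\nu}_{F_m,N,\chi}$ (not merely in $(F_m^\times/p^N)_\chi$), and that after all substitutions $\bar\phi^q$ applied to it is the targeted minor rather than some correction thereof. The multiplier $\delta$ enters because passing from a principal-ideal relation in $A_{m,\chi}$ back to an equality in $(\mca{O}^\times_{F_m}/p^N)_\chi$ requires killing the defect $\overline{\mca{O}^1_{F_m}} / N_\infty(\overline{\mca{O}^1_{F_m}}) \simeq X^{\Gamma_m}_\chi$ furnished by Proposition~\ref{delta no moto}, which Corollary~\ref{delta} identifies as being annihilated exactly by $\ann_{\Lambda_\chi}(X_{\mrm{fin},\chi})$. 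Taking the inverse limit over $(m,N)$ then yields part~(2) in its stated global form.
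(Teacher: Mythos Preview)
Your outline has the right ingredients (the square presentation of $X'_\chi$, Kurihara's elements $x_{\nu,q}$, the Chebotarev-type choice of primes via Proposition~\ref{chebo appli}), but there is a genuine gap at the step ``at the final stage $\bar\phi^q_{F_m,N,\chi}(x_{\nu,q})=\delta\alpha$''.  The actual relations one extracts from the construction are not exact equalities but have the shape of Proposition~\ref{ind}:
\[
\delta(\det M_{r-1})\,\bar\phi^{\ell_{r+1}}(x_{\nu_r,q})
  \;=\;\pm\,\delta(\det M_r)\,\bar\phi^{\ell_r}(x_{\nu_{r-1},q}),
\]
coming from a cofactor computation for the presentation matrix $M$ (this is where Lemma~\ref{ab} and the maps $\alpha,\beta$ of \S7.1 enter).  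At a fixed finite level the factor $\det M_{r-1}$ is typically a \emph{zero divisor} in $R_{F_m,N,\chi}$, so you cannot cancel it to reach $\bar\phi^{\ell_{r+1}}(x_{\nu_r,q})=\pm\delta\det M_r$; your reduction ``it suffices to show at each $(m,N)$ \dots'' therefore cannot be executed by a direct construction at that level.  The paper circumvents this by letting $m$ vary and proving a \emph{convergence} statement in $\Lambda_\chi$ (\S7.4): since each $\det M_{r-1}$ is a nonzero element of the integral domain $\Lambda_\chi$, one can cancel it in the limit and conclude that $(\bar\phi^{\ell_{i+1}}(x_{\nu,q})_m)_m$ converges to $\pm\delta\det M_i$, whence $\delta\det M_i\in\mf{C}_{i,\chi}$.

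A second missing ingredient is the invocation of the Iwasawa main conjecture.  In the paper the base of the chain (Proposition~\ref{ind}(1)) is anchored by choosing the isomorphism $\varphi\colon(\overline{\mca{O}^1_\infty})_\chi\xrightarrow{\sim}\Lambda_\chi$ so that $\mrm{cyc}(\rho_m)_\chi\mapsto\det M_0$; this is exactly where IMC is used, and without it there is no reason the first relation produces $\det M_1$ rather than an arbitrary generator of $\cha_{\Lambda_\chi}((\overline{\mca{O}^1_\infty}/\overline{C^1_\infty})_\chi)$.  Your proposal treats $\bar\phi^q(x_{\nu,q})=\delta\alpha$ as a target to be arranged freely, but in fact the value of $\bar\phi^{\ell_{i+1}}(x_{\nu,q})$ is \emph{forced} by the construction (up to the ambiguity above), and it equals the desired minor only because of IMC and the limit cancellation.
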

We have already proved Theorem \ref{Main theorem} for $i=0$ in the last section. Here, we prove the second assertion for $i \ge 1$.  

\subsection{}
We spend this subsection on the setting of notations.

We assume that $\chi \in \widehat{\Delta}$ is non-trivial. 
Since $X'_{\chi}$ has no non-trivial pseudo-null submodules, we have an exact sequence
\begin{align}\label{su}
\xymatrix{0 \ar[r] & \Lambda_\chi^h \ar[r]^{f} & \Lambda_\chi^h \ar[r]^{g} & X'_{\chi} \ar[r] & 0,} 
\end{align}
by Lemma \ref{no pn}.
Let $M$ be the matrix corresponding to  $f$ with respect to the standard basis $( \mb{e}_i )_{i=1}^h$ of $\Lambda_\chi^h$. Let $\{ m_1,...,m_h \}$ and $\{ n_1,...,n_h\}$ be permutations of $\{1,...,h\}$.
For any integer $i$ satisfying $1 \le i \le h-1$, consider the matrix $M_i$ which is obtained from $M$ by eliminating the $n_j$-th rows ($j=1,...,i$) and  the $m_k$-th columns ($k=1,...,i$). Here, we shall prove that $\delta \det M_i \in \mf{C}_{i,\chi}$ for any $i \in \bb{Z}_{\ge 1}$  for any  $\delta \in \ann (A_\mrm{fin})$. If $\det M_i=0$, this is trivial, so we assume that $\det M_i \not= 0$. If necessary, we permute $\{ m_1,...,m_i \}$, and assume $\det M_r \not= 0$ for all integers $r$ satisfying $ 0 \le r \le i$.

For each $m \in \bb{Z}_{\ge 0} $, we take a positive integer $N_m$ such that and $N_{m+1}> N_m > m+1$, and $p^{N_m} > \# A_m$  for any $m \in \bb{Z}_{\ge 0} $. 
For simplicity, we put $F:=F_m$, $R:=\bb{Z}_p[\Gal(F_m/F_0)]_\chi$,  $N:=N_m$ and $R_N:=R_{F,N,\chi}=\bb{Z}/p^N[\Gal(F_m/F_0)]_\chi$. 
Let $A_{m,\mrm{fin},\chi}$ be the image of $X_{\mrm{fin},\chi}$ in $A_{m,\chi}$ by the natural homomorphism. 
By Proposition \ref{ideal class}, the $R$-module $A'_{m,\chi}:=X'_{\Gamma_m,\chi}$ is regarded as the quotient $A_{m,\chi}/A_{m,\mrm{fin},\chi}$. 
From the exact sequence (\ref{su}), we obtain the exact sequence 
$$\xymatrix{0 \ar[r] & R^h \ar[r]^{\bar{f}} & R^h \ar[r]^{\bar{g}} & A'_{m,\chi} \ar[r] & 0,}$$
by taking the $\Gamma_m$-coinvariants.

The image of $\mb{e}_r$ in $R^h$ is denoted by $\mb{e}_i^{(m)}$. We define $\mb{c}_1:=g(\mb{e}_1),\dots, \mb{c}_h:=g(\mb{e}_h)$, and  $\mb{c}_r^{(m)}$ to be the image of $\mb{c}_r$ in $A_{m,\chi}/A_{m,\mrm{fin},\chi}$, namely $\mb{c}_r^{(m)}:= \bar{g}(\mb{e}_r^{(m)})$. We fix a lift $\tilde {\mb{c}}_r^{(m)} \in A_{m,\chi}$ of $\mb{c}_r^{(m)}$, and define $$P_r:=\{ \ell \in \mca{S}_N \ | \ [\ell_F]_\chi = \tilde {\mb{c}}_r^{(m)} \},$$
where $[\ell_F]_\chi$ is the class of $\ell_F$ in $A_{m,\chi}$.  
We define $P:=\bigcup_{r=1}^iP_r$, and $P_F$ to be the set of all the prime ideals of $F$ above $P$. Let $J$ be the subgroup of $\mca{I}_F$ generated by $P_F$, and the $R$-submodule $\mca{F}$  of $(F^\times\otimes\bb{Z}_p)_\chi$ the inverse image of $(J\otimes\bb{Z}_p)_{\chi}$ by the homomorphism  $\xymatrix{(\cdot )_F\colon ( F^{\times} \otimes \bb{Z}_p)_\chi \ar[r] &  (\mca{I}_F\otimes \bb{Z}_p)_{\chi}}$. We define a surjective homomorphism $$\xymatrix{\alpha\colon (J\otimes \bb{Z}_p)_\chi \ar[r] & R^h}$$ by $\ell_F \mapsto \mb{e}_r$ for each $\ell \in P_r $ and $r $ with $1 \le r \le h$. We define $$\xymatrix{\alpha_r := \mrm{pr}_r \circ \alpha\colon (J\otimes \bb{Z}_p)_\chi \ar[r]^(0.76){\alpha} & R^h  \ar[r]^{\mrm{pr}_r} & R}$$
to be the composition of $\alpha$ and the $r$-th projection.

We define the homomorphism $\xymatrix{\beta\colon \mca{F} \ar[r] & R^h}$ to make the following diagram  
\begin{align}\label{diagram2}
\xymatrix{
& \mca{F} \ar[r]^(0.38){(\cdot)_{F, \chi}} \ar[d]^\beta & (J\otimes \bb{Z}_p)_\chi \ar@{->>}[r]^(0.6){\mrm{can.}} \ar[d]^\alpha &  A'_{m,\chi} \ar@{=}[d] &  \\
0 \ar[r] & R^h \ar[r]^{\bar{f}} & R^h \ar[r]^{\bar{g}} & A'_{m,\chi} \ar[r] & 0 \\
}
\end{align}
commute, where $\mrm{can.}$ is induced by the canonical homomorphism $J \longrightarrow A'_{m,\chi}= A_{m,\chi}/A_{m,\mrm{fin},\chi}$. Note that since the second row of the diagram is exact, $\beta$ is well-defined. We define $$\xymatrix{\beta_r := \mrm{pr}_r \circ \beta\colon \mca{F} \ar[r]^(0.71){\beta} & R^h \ar[r]^{\mrm{pr}_r} & R}$$
to be the composition of $\beta$ and the $r$-th projection.

We consider the diagram (\ref{diagram2}) by taking $( -  \otimes \bb{Z}/p\bb{Z})$. 
First, we prove the following two lemmas, namely Lemma \ref{tannsha} and \ref{delta kowaza 2}.
\begin{lem}\label{tannsha}
The canonical homomorphism $\xymatrix{\mca{F}/p^N \ar[r] & (F^{\times}/p^N)_{\chi}}$ is injective.
\end{lem}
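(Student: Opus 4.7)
The plan is to reduce the injectivity of $\mca{F}/p^N \to (F^\times/p^N)_\chi$ to a torsion-freeness statement about the idele-class data, then verify the torsion-freeness by inspecting a natural basis of $\mca{I}_F$.

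First I would pass to the setting of $\bb{Z}_p$-completions. Since $F$ is totally real and $p$ is odd, the group $F^\times$ has no $p$-torsion, so the natural map $F^\times \otimes \bb{Z}_p \to F^\times/p^N$ has cokernel zero and identifies $(F^\times\otimes\bb{Z}_p)/p^N$ with $F^\times/p^N$ canonically. Taking $\chi$-parts (exact since $|\Delta|$ is prime to $p$), the target of our map is $(F^\times\otimes\bb{Z}_p)_\chi/p^N$. Hence the claim is equivalent to
\[
\mca{F} \cap p^N (F^\times\otimes\bb{Z}_p)_\chi \;=\; p^N \mca{F}.
\]
Unwinding: given $y\in (F^\times\otimes\bb{Z}_p)_\chi$ with $p^N y \in \mca{F}$, we must show $y \in \mca{F}$, i.e.\ that $(y)_F \in (J\otimes\bb{Z}_p)_\chi$.

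The main step is then the following torsion-freeness observation. The set $P_F$ of primes of $F$ above $P$ is a subset of the natural $\bb{Z}$-basis of $\mca{I}_F$ (the set of all finite primes), so $J$ is a direct summand of $\mca{I}_F$ as a $\bb{Z}$-module, and $\mca{I}_F/J$ is a free $\bb{Z}$-module. Tensoring with $\bb{Z}_p$ preserves freeness, giving $(\mca{I}_F\otimes \bb{Z}_p)/(J\otimes\bb{Z}_p) \simeq (\mca{I}_F/J)\otimes \bb{Z}_p$, which is a free (in particular $p$-torsion free) $\bb{Z}_p$-module. Taking the $\chi$-component is exact and preserves $p$-torsion freeness, so the quotient $(\mca{I}_F\otimes \bb{Z}_p)_\chi/(J\otimes\bb{Z}_p)_\chi$ is $p$-torsion free as well.

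Now I would conclude. Since $p^N y \in \mca{F}$, the image $p^N (y)_F$ lies in $(J\otimes\bb{Z}_p)_\chi$, so the class of $(y)_F$ in $(\mca{I}_F\otimes \bb{Z}_p)_\chi/(J\otimes\bb{Z}_p)_\chi$ is killed by $p^N$; by the torsion-freeness just established this class vanishes, whence $(y)_F \in (J\otimes\bb{Z}_p)_\chi$ and $y\in\mca{F}$, as required. There is no real obstacle here: the argument is essentially a bookkeeping exercise, and the only points deserving care are the absence of $p$-torsion in $F^\times$ (which uses that $F$ is totally real and $p$ is odd) and the fact that $\mca{I}_F/J$ is $\bb{Z}$-free because $J$ is generated by a subset of the canonical prime-ideal basis of $\mca{I}_F$.
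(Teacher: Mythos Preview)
Your proof is correct and follows essentially the same route as the paper's: reduce to showing that if $p^N y \in \mca{F}$ for some $y \in (F^\times\otimes\bb{Z}_p)_\chi$ then $y \in \mca{F}$, and deduce this from the $\bb{Z}_p$-torsion-freeness of $(\mca{I}_F\otimes\bb{Z}_p)/(J\otimes\bb{Z}_p)$. One small remark: the hypothesis that $F^\times$ has no $p$-torsion is not actually needed for the identification $(F^\times\otimes\bb{Z}_p)/p^N \simeq F^\times/p^N$, since $(A\otimes\bb{Z}_p)/p^N \simeq A\otimes\bb{Z}/p^N \simeq A/p^N$ for any abelian group $A$.
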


\begin{proof}
Let $x$ be an element in the kernel of the homomorphism $\xymatrix{\mca{F}/p^N \ar[r] & (F^{\times}/p^N)_{\chi}}$ and $\tilde{x}$ a lift of $x$ in $\mca{F}$. Then, there exists  $y \in (F^{\times} \otimes \bb{Z}_p)_\chi$ such that $\tilde{x}=y^{p^N}$. 
Since $(\tilde{x})_{F, \chi} \in (J\otimes \bb{Z}_p)_\chi$ and $(\mca{I}_F\otimes \bb{Z}_p)/(J\otimes \bb{Z}_p)$ is torsion free $\bb{Z}_p$-module, we have $ (y)_{F, \chi} \in (J\otimes \bb{Z}_p)_\chi$.
Hence, $y \in \mca{F}$, and we obtain $x=1$.
\end{proof}
The $R_N$-module $\mca{F}/p^N$ is regarded as a submodule of $(F^{\times}/p^N)_{\chi}$ by Lemma \ref{tannsha}. 

We regard $(F^{\times}/p^N)_\chi$ as a $\Lambda_\chi$-module. For an element $x \in (F^{\times}/p^N)_{\chi}$ and $\delta \in \ann_{\Lambda_\chi}(X_{\mrm{fin},\chi})$, we denote the scaler multiple of $x$ by $\delta \in \Lambda_\chi$ by $x^\delta$.

\begin{lem}
\label{delta kowaza 2}
Let $[\cdot ]_{F,N,\chi}$ be the homomorphism $\xymatrix{(F^{\times}/p^N)_\chi \ar[r] & (\mca{I}_F/p^N)_{\chi}}$ induced by $\xymatrix{(\cdot )_F\colon F^\times \ar[r] & \mca{I}_F}$. Let $x$ be an element of $(F^{\times}/p^N)_\chi$ such that 
$[x]_{F,N,\chi} \in (J/p^N)_\chi.$
Then, for any $\delta \in \ann_{\Lambda_\chi}(X_{\mrm{fin},\chi})$, $x^\delta$ is contained in $\mca{F}/p^N \subset (F^{\times}/p^N)_\chi$.
\end{lem}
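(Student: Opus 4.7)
The plan is to lift $x$ to $(F^\times \otimes \bb{Z}_p)_\chi$, use the hypothesis to record that its ideal lies in $(J \otimes \bb{Z}_p)_\chi$ modulo $p^N$, and then exploit the fact that $\delta$ annihilates the quotient $A_{m,\chi}/A_{m,\chi}^J$ (where $A_{m,\chi}^J$ is the image of $J$ in the class group) in order to absorb the $p^N$-error term as a $p^N$-th power of some element of $(F^\times \otimes \bb{Z}_p)_\chi$.

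First, I would pick an arbitrary lift $\tilde{x} \in (F^\times \otimes \bb{Z}_p)_\chi$ of $x$. The hypothesis $[x]_{F,N,\chi} \in (J/p^N)_\chi$ says that modulo $p^N$ the ideal $(\tilde{x})_F$ lies in the image of $(J \otimes \bb{Z}_p)_\chi$, so one can write
\[
(\tilde{x})_F = j + p^N a, \quad j \in (J \otimes \bb{Z}_p)_\chi, \ a \in (\mca{I}_F \otimes \bb{Z}_p)_\chi.
\]
Since the set $P$ of primes used to define $J$ is $\Gal(F/\bb{Q})$-stable, $J$ is $\Lambda$-stable, so applying $\delta$ gives
\[
(\tilde{x}^\delta)_F = \delta j + p^N \delta a, \quad \delta j \in (J \otimes \bb{Z}_p)_\chi.
\]

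The main obstacle — and the key conceptual step — is to show that $\delta \in \ann_{\Lambda_\chi}(X_{\mrm{fin},\chi})$ annihilates $A_{m,\chi}/A_{m,\chi}^J$. I would argue as follows: by construction of $P_r$, each lift $\tilde{\mb{c}}_r^{(m)}$ equals $[\ell_F]_\chi$ for some $\ell \in P_r$, so all $\tilde{\mb{c}}_r^{(m)}$ lie in $A_{m,\chi}^J$. Since these are lifts of $\mb{c}_r^{(m)} = \bar{g}(\mb{e}_r^{(m)})$ and the $\mb{c}_r^{(m)}$ generate $A'_{m,\chi}$, the composition $A_{m,\chi}^J \hookrightarrow A_{m,\chi} \twoheadrightarrow A'_{m,\chi}$ is surjective. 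This forces $A_{m,\chi} = A_{m,\chi}^J + A_{m,\mrm{fin},\chi}$, hence
\[
A_{m,\chi}/A_{m,\chi}^J \cong A_{m,\mrm{fin},\chi}/(A_{m,\mrm{fin},\chi} \cap A_{m,\chi}^J),
\]
which is a quotient of $A_{m,\mrm{fin},\chi}$, itself a quotient of $X_{\mrm{fin},\chi}$. Therefore $\delta$ annihilates $A_{m,\chi}/A_{m,\chi}^J$.

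To finish, applying this annihilation to the class of $a$, I would obtain a decomposition $\delta a = j' + (b)_F$ for some $j' \in (J \otimes \bb{Z}_p)_\chi$ and some $b \in (F^\times \otimes \bb{Z}_p)_\chi$. Then
\[
(\tilde{x}^\delta \cdot b^{-p^N})_F = \delta j + p^N j' \in (J \otimes \bb{Z}_p)_\chi,
\]
so $\tilde{x}^\delta \cdot b^{-p^N} \in \mca{F}$ by definition of $\mca{F}$. Reducing modulo $p^N$ and identifying $\mca{F}/p^N$ with its image in $(F^\times/p^N)_\chi$ via Lemma \ref{tannsha} yields $x^\delta \in \mca{F}/p^N$, which is exactly the desired conclusion.
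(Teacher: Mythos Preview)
Your proof is correct and rests on the same key fact as the paper's argument, namely that $\delta$ annihilates $A_{m,\chi}/A_{m,\chi}^J$ (in the paper's notation, $\delta A_m \subseteq B_m$); you even supply the justification for this, which the paper leaves implicit. The only difference is packaging: the paper derives two four-term exact sequences
\[
0 \longrightarrow B_m \longrightarrow \mca{P}_0/p^N \longrightarrow J/p^N \longrightarrow B_m \longrightarrow 0
\quad\text{and}\quad
0 \longrightarrow A_m \longrightarrow \mca{P}/p^N \longrightarrow \mca{I}_F/p^N \longrightarrow A_m \longrightarrow 0
\]
via the snake lemma (where $\mca{P}=F^\times/\mca{O}_F^\times$ and $\mca{P}_0=\mca{F}/\mca{O}_F^\times$) and then performs a diagram chase, whereas you lift $x$ directly to $(F^\times\otimes\bb{Z}_p)_\chi$ and chase elements by hand. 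Your route is a bit more transparent and avoids the auxiliary quotients by units; the paper's route has the side benefit that the top exact sequence immediately yields Corollary~\ref{yuugensei} on the finiteness of $\ker\big(\mca{F}/p^N \to J/p^N\big)$.
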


\begin{proof}
Recall the canonical exact sequence:
$$\xymatrix{0 \ar[r] & \mca{P} \ar[r] & \mca{I}_K \ar[r] & A_m \ar[r] & 0,} $$
where $\mca{P}$ is defined by $\mca{P} = F^\times/\mca{O}_{F_m}^\times$. By the snake lemma for the commutative diagram
$$\xymatrix{
0 \ar[r] & \mca{P} \ar[r] \ar[d]^{\times p^N} & \mca{I}_F \ar[r] \ar[d]^{\times p^N} & A_m \ar[r] \ar[d]^{\times p^N} & 0 \\
0 \ar[r] & \mca{P} \ar[r]  & \mca{I}_F \ar[r]  & A_m \ar[r]  & 0, \\
}
$$
we obtain the following exact sequence 
$$\xymatrix{0 \ar[r] & A_m \ar[r] & \mca{P}/p^N \ar[r] & \mca{I}_K/p^N \ar[r] & A_m \ar[r] & 0. }\ \  (\text{Recall}\ \ p^{N_m} > \# A_m.)$$

Let $B_m$ be the image of $J$ in $A_m$, and $\mca{P}_0 = \mca{F}/\mca{O}_{F_m}^\times$. Then, we have the exact sequence  
$$\xymatrix{0 \ar[r] & \mca{P}_0 \ar[r] & J \ar[r] & B_m \ar[r] & 0,} $$
and by a similar argument as above, we obtain the exact sequence
$$\xymatrix{0 \ar[r] & B_m \ar[r] & \mca{P}_0/p^N \ar[r] & J/p^N \ar[r] & B_m \ar[r] & 0.}$$ 
Now, we obtain the commutative diagram
\begin{align}\label{kernelfinite}
\xymatrix{
0 \ar[r] & B_m \ar[r] \ar@{^{(}->}[d] & \mca{P}_0/p^N \ar[r] \ar@{^{(}->}[d] & J/p^N \ar[r] \ar@{^{(}->}[d] & B_m \ar[r] \ar@{^{(}->}[d] & 0 \\
0 \ar[r] & A_m \ar[r] & \mca{P}/p^N \ar[r] & \mca{I}_F/p^N \ar[r]  & A_m \ar[r]  & 0 \\
}
\end{align}
whose two rows are exact, and the vertical arrows are injective.
Since $\delta A_m$ is contained in $B_m$, our lemma follows from a diagram chase. 
\end{proof}

From the first row of the diagram (\ref{kernelfinite}), we obtain the following corollary immediately.

\begin{cor}\label{yuugensei}
The order of the kernel of the homomorphism
$\xymatrix{[\cdot]_{F, \chi}\colon \mca{F}/p^N \ar[r] & J/p^N } $
is finite.
\end{cor}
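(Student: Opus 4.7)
The plan is to factor the map $[\cdot]_{F,\chi} \colon \mca{F}/p^N \to J/p^N$ through $\mca{P}_0/p^N$ and to show that each of the two factors has finite kernel. Since units lie in the kernel of the ideal map, $[\cdot]_{F,\chi}$ descends to a well-defined factorization
\[
\mca{F}/p^N \twoheadrightarrow \mca{P}_0/p^N \longrightarrow J/p^N,
\]
where the second arrow is precisely the map appearing in the top row of diagram~(\ref{kernelfinite}) from the proof of Lemma~\ref{delta kowaza 2}.

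For the second arrow I would just read off its kernel from the top row of~(\ref{kernelfinite}), which is the exact sequence $0 \to B_m \to \mca{P}_0/p^N \to J/p^N \to B_m \to 0$. This identifies the kernel with $B_m$, and $B_m$ is finite since it is a subgroup of the finite $p$-class group $A_m$. For the first arrow, applying $- \otimes \bb{Z}/p^N\bb{Z}$ (which is right exact) to the defining short exact sequence
\[
0 \to (\mca{O}_{F_m}^\times \otimes \bb{Z}_p)_\chi \to \mca{F} \to \mca{P}_0 \to 0
\]
shows that its kernel is a quotient of $(\mca{O}_{F_m}^\times / p^N)_\chi$. By Dirichlet's unit theorem $\mca{O}_{F_m}^\times$ is finitely generated, hence $\mca{O}_{F_m}^\times / p^N$ is a finite group, and so is its $\chi$-component.

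Combining the two observations, the total kernel of $[\cdot]_{F,\chi}$ is an extension of a (finite) subgroup of $B_m$ by a finite quotient of $(\mca{O}_{F_m}^\times/p^N)_\chi$, and is therefore finite. This is essentially a diagram chase; no step presents a real obstacle once the factorization is in place and the top row of~(\ref{kernelfinite}) is invoked, with Dirichlet's theorem handling the remaining contribution from the units.
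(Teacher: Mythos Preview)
Your argument is correct and follows the same route as the paper: the paper deduces the corollary ``immediately'' from the top row of diagram~(\ref{kernelfinite}), which identifies the kernel of $\mca{P}_0/p^N \to J/p^N$ with the finite group $B_m$, while you have additionally made explicit the passage from $\mca{F}/p^N$ to $\mca{P}_0/p^N$ via the finite unit contribution $(\mca{O}_{F_m}^\times/p^N)_\chi$ that the paper leaves implicit.
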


Let $n$ be an element of $\mca{N}_N$ whose prime divisors are in $P$. We define $P^n_F$ to be the set of all elements of $P$ dividing $n$. We define $J_n$ to be the subgroup of $J$ generated by $P^n_F$,  and the submodule $\mca{F}_{n,N}$ of $\mca{F}/p^N$ the inverse image of $J_n$ by the restriction of $[\cdot ]_{F,N,\chi}$ to $\mca{F}/p^N$. Note that $\mca{F}_{n,N}$ is a {\em finite} $R_N$-submodule of $(F^{\times}/p^N)_\chi$ by Corollary \ref{yuugensei}. We have obtained the following commutative diagram

$$\xymatrix{
\mca{F}_{n,N} \ar[r]^(0.42){[\cdot]_{F, \chi}} \ar[d]^\beta & (J_n/p^N)_\chi \ar[d]^\alpha   \\
 R_N^h \ar[r]^{\bar{f}} & R_N^h \ .   \\
}$$

\subsection{}
Let $\delta$ be a non-zero element of $\ann_{\Lambda_{\chi}}(X_{\mrm{fin}})$.
In this and the next subsection, we write $\bar{\phi}^\ell$ in place of $\bar{\phi}_{F,N,\chi}^\ell$ for simplicity. 
Here, as in \cite{Ku}, we shall take the element $x_{\nu,q}  \in (F^\times/p^N)_\chi$ which is defined in Definition \ref{x}, to translate $\beta_r$ to homomorphisms of the type $\bar{\phi}^\ell$. Recall the element $x_{\nu,q}  \in (F^\times/p^N)_\chi$ is determined by $q$, $\nu$, and $\{ a_\ell \}_{\ell |\nu}$. We shall take them as follows.  

First, we take a prime number $q$ by the following way. For each integer $r$ with $1 \le r \le h$, we fix a prime number $q_r \in P_{n_r}$.
We put $Q:=\prod_{r=1}^h q_r \in \mca{N}_N$. By the Iwasawa main conjecture, we can take an isomorphism $$\xymatrix{\varphi\colon (\overline{\mca{O}_{\infty}^1})_{\chi} \ar[r]^(0.64){\simeq} & \Lambda_{\chi}}$$ which sends $\big(u\cdot\mrm{cyc}(\rho_m)_{\chi}\big)_{m \ge 0}$ to $\det M_0$. Let 
$$\xymatrix{\bar{\varphi}_{F,N, \chi}\colon \big(N_{\infty}(\mca{O}_{F}^1)/p^N\big)_{\chi} \ar[r]^(0.74){\simeq} & R_N}$$ be the induced isomorphism by $\varphi$. 
Recall that we define $\mca{NO}_{F_m,N,\chi}$ to be the image of the natural homomorphism $$\xymatrix{\big(N_{\infty}(\mca{O}_{F_m}^\times)/p^N\big)_\chi \ar[r] & \big(\mathcal{O}_{F_m}^\times/p^N\big)_\chi \subset \big(F_m^{\times}/p^N\big)_\chi.}$$
By Lemma \ref{delta kowaza}, there exists a homomorphism 
$\xymatrix{\psi\colon \mca{NO}_{F,N,\chi} \ar[r] & R_N} $ which makes the diagram
$$\xymatrix{
 (C_F^1/p^N)_\chi \ar[r] \ar@{->>}[d] & \big(N_{\infty}(\mca{O}_{F}^1)/p^N\big)_{\chi} \ar[rr]^(0.63){\delta \bar{\varphi}_{F,N, \chi}}  \ar[d] & & R_N \\
W_{F_m,N,\chi}^1  \ar@{^{(}->}[r] & \mca{NO}_{F,N,\chi} \ar@{-->}[rru]_{\psi}  & \\
}
$$ 
commute. By Proposition \ref{chebo appli}, we can take a prime number $q\in \mca{S}_N$ satisfying the following two conditions:
\begin{enumerate}
\item[(q1)] the class of $q_{F_m}$ in $A_{m,\chi}$ coincides with the class of ${q_1}_F$;
\item[(q2)] $\mca{NO}_{F,N,\chi}$ is contained in the kernel of $[\cdot ]_{F,N,\chi}^{q}$, and for all $x \in \mca{NO}_{F,N,\chi}$,$$\psi(x)=\bar{\phi}^{q}(x).$$ 
\end{enumerate}
In particular, we have 
\begin{align*}
\bar{\phi}^{q}(\mrm{cyc}(\rho_m)) &= \bar{\phi}^{q}(u \cdot \mrm{cyc}(\rho_m)) \\
&= \delta \bar{\varphi}_{F,N, \chi}(\mrm{cyc}(\rho_m)) \\
&= \delta \det M_0.
\end{align*}
We replace $q_1$ by $q$.

Next, we shall take  $\nu$ and $\{ a_\ell \}_{\ell |\nu}$. 

First, we consider the homomorphism $\xymatrix{\beta_{m_1}\colon \mca{F}_{Q,N} \ar[r] & R_N}$. Applying Proposition \ref{chebo appli}, we can take $\ell_2 \in \mca{S}_N(F(Q))$ such that $\ell_2 \in P_{n_2}$, $\ell\not=q_2$, and $$\beta_{m_1}(x)= \bar{\phi}^{\ell_2}(x) $$ for all $x \in \mca{F}_{Q,N}$. We put $\nu_1:=1$.

In the case $i=1$, we put $\nu:=\nu_1=1$, and $x_{\nu,q}=x_{1,q}= \kappa_{q}(\xi_q)$. It follows from Proposition \ref{[] and phi and x} (1) and Lemma \ref{delta kowaza 2} that $x_{1,q}^\delta $ is an element of $\mca{F}_{Q ,N}$.  

Suppose $i \ge 2$. To take $\nu$ and $\{ a_\ell \}_{\ell |\nu}$, we choose prime numbers $\ell_r$ for each $r$ with $2\le r \le i+1$  by induction on $r$ as follows.   
Let $r$ be an integer satisfying $2<r \le i+1$, and suppose that we have chosen distinct prime numbers $\ell_{s} \in \mca{S}_N(F(Q\nu_{s-1}))$ for each $s$ with $2 \le s \le r-1$. We put $\nu_{r-1} := 
\prod_{s=2}^{r-1}\ell_s$. 
We consider the homomorphism $\xymatrix{\beta_{m_1}\colon \mca{F}_{Q \nu_{r-1},N} \ar[r] & R_N.}$ 
Applying Proposition \ref{chebo appli}, we can take $\ell_r \in \mca{S}_N(F(Q\nu_{r-1}))$ satisfying the following conditions:
\begin{itemize}
\item[(x1)] $\ell_r \in P_{n_r}$, and $\ell \not=q_r$;
\item[(x2)] there exists $b_r \in (F^\times \otimes \bb{Z}_p)_\chi$ such that $(b_r)_{F, \chi} =(\ell_{r,F}-q_{r,F})_{\chi}$
 and  $\bar{\phi}^{\ell_s}(b_r)=0$ for any $s$ with $2 \le s < r$;  
\item[(x3)] $\bar{\phi}^{\ell_r}(x)= \beta_{m_{r-1}}(x)$ for any $x \in \mca{F}_{Q \nu_{r-1},N}$.
\end{itemize}
Thus, we have taken $\ell_2, \dots , \ell_{i+1}$, and we put $\nu:=\nu_{i}=\prod_{r=2}^i\ell_r\in \mca{N}_N$. For each $r$ with $2 \le r \le i$, we put $a_{\ell_r}:=-{\phi}^{\ell_r}(b_r) \in R_N \otimes H_{\ell_r},$
and we obtain $x_{\nu,q} \in (F^\times/p^N)_\chi$. It follows from Proposition \ref{[] and phi and x} (1) and Lemma \ref{delta kowaza 2} that $ x_{\nu,q}^\delta$ is an element of $\mca{F}_{Q \nu,N}$. Note that $q\nu$ is {\em well-ordered}. 

\subsection{}
In this subsection, we observe two homomorphism $\alpha$ and $\beta$ by using $x_{n,q}$, and describe $\det M_i$ in $R_N$. 
First, we prepare the following lemma.
\begin{lem}[cf. \cite{Ku} Lemma 9.2]
\label{ab}
Suppose $i \ge 2$. Then,
\begin{enumerate}
\item $ \beta_{m_{r-1}}(x_{\nu,q}^\delta )=0$ for all $r$ with $2 \le r \le i$;
\item $\alpha_j([x_{\nu,q}]_{F,\chi})=0$ for any $j\not=n_1,\dots,n_i.$ 
\end{enumerate}
\end{lem}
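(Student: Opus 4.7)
Part (2) is the easier piece and can be handled by a direct support analysis. Proposition~\ref{[] and phi and x}(1) shows that $[x_{\nu,q}]_{F,\chi}$ is supported at primes of $F$ above $q\nu = q\ell_2\cdots\ell_i$. The construction of the primes used, namely property (q1) placing $q \in P_{n_1}$ and property (x1) placing $\ell_r \in P_{n_r}$ for $2 \le r \le i$, then confines this support to $P_{n_1}\cup\cdots\cup P_{n_i}$. Since $\alpha$ is defined so that primes of $P_j$ generate the direct summand $R\cdot\mb{e}_j$, the projection $\alpha_j$ vanishes on any divisor supported outside $P_j$, and in particular $\alpha_j([x_{\nu,q}]_{F,\chi}) = 0$ whenever $j \notin \{n_1,\ldots,n_i\}$.

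For part (1), I would work from the defining commuting square $\bar f \circ \beta = \alpha \circ [\,\cdot\,]_{F,\chi}$ of diagram (\ref{diagram2}). First I would compute $\bar f \big( \beta (x_{\nu,q}^\delta) \big)$ coordinate by coordinate, using $\bar f \big(\beta(x_{\nu,q}^\delta)\big)=\alpha\big(\delta [x_{\nu,q}]_{F,\chi}\big)$ together with Proposition~\ref{[] and phi and x}(2); this expresses the $\mb{e}_{n_r}$-component as $-\bar\phi^{\ell_r}\big(x_{\nu/\ell_r,q}^\delta\big)$ for $r\ge 2$ and the $\mb{e}_{n_1}$-component in terms of the $q$-component of $[x_{\nu,q}]_{F,\chi}^q$. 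The plan is then to use property (x3), which identifies $\bar\phi^{\ell_r}$ with $\beta_{m_{r-1}}$ on $\mca F_{Q\nu_{r-1},N}$, together with an inductive reduction of $x_{\nu/\ell_r,q}^\delta$ to this subspace via iterated application of Proposition~\ref{[] and phi and x}(3). The key point is that $\delta$-twisting forces $x_{\nu,q}^\delta$ into $\mca F_{Q\nu,N}$ (Lemma~\ref{delta kowaza 2}), so these component maps are all legitimately defined.

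The main obstacle is the combinatorial bookkeeping in the inductive step: each invocation of Proposition~\ref{[] and phi and x}(3) introduces a factor $\bar a_{\ell_s} = -\bar\phi^{\ell_s}(b_s)$, and these factors must be made to interact with the cross-vanishing relations of property (x2), namely $\bar\phi^{\ell_s}(b_r) = 0$ for $2 \le s < r$, together with the ideal-theoretic identity $(b_r)_{F,\chi} = (\ell_{r,F} - q_{r,F})_\chi$. The latter will be what allows replacing $\ell_r$ by $q_r$ inside $\alpha$, producing a contribution that is absorbed by the $\mb{e}_{n_1}$-term and thus annihilates the would-be obstruction to $\beta_{m_{r-1}}(x_{\nu,q}^\delta) = 0$. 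I expect the cancellation pattern to mirror that of \cite{Ku} Lemma~9.2 exactly, but carrying it through cleanly in the cyclotomic-unit setting, where one has to track the $\delta$-twist and the extra auxiliary homomorphism $\psi$ used to build $q$ in (q2), will be the delicate portion of the argument.
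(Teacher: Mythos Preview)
Your argument for part (2) is correct and matches the paper's one-line proof via Proposition~\ref{[] and phi and x}(1).

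For part (1), however, your plan has a structural gap. Computing $\bar f\big(\beta(x_{\nu,q}^\delta)\big)=\alpha\big(\delta[x_{\nu,q}]_{F,\chi}\big)$ coordinatewise gives you the $\mb{e}_{n_r}$-component of $\bar f\big(\beta(x_{\nu,q}^\delta)\big)$, not $\beta_{m_{r-1}}(x_{\nu,q}^\delta)$ itself; recovering the latter from the former would require inverting $\bar f$, which over $R_N$ you cannot do without picking up a factor of $\det M$. Moreover, condition (x3) identifies $\bar\phi^{\ell_r}$ with $\beta_{m_{r-1}}$ only on $\mca F_{Q\nu_{r-1},N}$, and neither $x_{\nu,q}^\delta$ nor $x_{\nu/\ell_r,q}^\delta$ lies there: both are supported at $\ell_{r},\ell_{r+1},\dots,\ell_i$, which are outside $Q\nu_{r-1}$. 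Proposition~\ref{[] and phi and x}(3) only relates $\bar\phi^{\ell}$-values at successive levels; it does not move elements into a smaller $\mca F_{\bullet,N}$, so the ``inductive reduction'' you describe does not go through.

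The paper's device avoids both issues simultaneously. One first observes that $\alpha\big((b_s)_{F,\chi}\big)=\mb{e}_{n_s}-\mb{e}_{n_s}=0$, whence $\beta(b_s)=0$ by injectivity of $\bar f$ over $R$. Then one \emph{corrects} $x_{\nu,q}$ to
\[
y_r \;=\; x_{\nu,q}\,\prod_{s=r}^{i} b_s^{\,\bar\phi^{\ell_s}(x_{\nu/\ell_s,q})},
\]
which has the same $\beta$-image as $x_{\nu,q}$ but, by Proposition~\ref{[] and phi and x}(2), has $[y_r]_{F,N,\chi}$ supported only on $Q\nu_{r-1}$: the exponent of $b_s$ is chosen precisely to cancel the $\ell_s$-part of $[x_{\nu,q}]$. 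Hence $y_r^\delta\in\mca F_{Q\nu_{r-1},N}$, condition (x3) applies directly, and $\beta_{m_{r-1}}(x_{\nu,q}^\delta)=\beta_{m_{r-1}}(y_r^\delta)=\delta\,\bar\phi^{\ell_r}(y_r)$. The vanishing of $\bar\phi^{\ell_r}(y_r)$ then follows from (x2), which kills the $b_s$ factors with $s>r$, and from Proposition~\ref{[] and phi and x}(3), which gives $\bar\phi^{\ell_r}(x_{\nu,q})=\bar a_{\ell_r}\bar\phi^{\ell_r}(x_{\nu/\ell_r,q})=-\bar\phi^{\ell_r}(b_r)\bar\phi^{\ell_r}(x_{\nu/\ell_r,q})$, exactly cancelling the remaining $b_r$ term. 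The missing idea in your proposal is this explicit correction element $y_r$.
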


\begin{proof}
The second assertion (2) of the above lemma is clear by Proposition \ref{[] and phi and x} (1). 

We shall prove the first assertion. For any $r$ satisfying $2 \le r \le i$, we have $\alpha ([b_r]_{F,\chi})=0$ since $(b_r)_{F, \chi} =(\ell_{r,F}-q_{r,F})_{\chi}$.
Then, by the definition of $\beta$, we have $\beta(b_r)=0.$
We put $$y_r=x_{\nu,q}\prod_{s=r}^ib_s^{\bar{\phi}^{\ell_s}(x_{\nu/\ell_s,q})}, $$
then we have $\beta( x_{\nu,q}^\delta)=\beta( y_r^\delta)$. We will prove  $ \beta_{m_{r-1}}(y_r^\delta)=0$ for any $r$ satisfying $2 \le r \le i$. 

By Proposition \ref{[] and phi and x} (2), we have $ [y_r]_{F,N,\chi} \in J_{Q \nu_{r-1}}$, and then, by Lemma \ref{delta kowaza 2}, we have $y_r^\delta \in \mca{F}_{Q \nu_{r-1},N}$. Therefore, we obtain $$\delta \bar{\phi}^{\ell_r}( y_r)= \beta_{m_{r-1}}(y_r^\delta )$$  
by the condition (x3). Since $\bar{\phi}^{\ell_r}( b_{s})=0$ for all integers $s$ satisfying $r+1 \le s \le i$ by the condition (x2), we have 
$$\bar{\phi}^{\ell_r}( y_r) =
 \bar{\phi}^{\ell_r}( x_{\nu,q} b_r^{\bar{\phi}^{\ell_r}(x_{\nu/\ell_r,q})}).$$
By Proposition \ref{[] and phi and x} (3), we have $$\bar{\phi}^{\ell_r}( x_{\nu,q} b_r^{\bar{\phi}^{\ell_r}(x_{\nu/\ell_r,q})}) = - \bar{\phi}^{\ell_r}(b_r)\bar{\phi}^{\ell_r}(x_{\nu/\ell_r,q})+\bar{\phi}^{\ell_r}(x_{\nu/\ell_r,q})\bar {\phi}^{\ell_r}(b_r)=0.$$
Therefore, we obtain $ \beta_{m_{r-1}}( x_{\nu,q}^\delta)=\beta_{m_{r-1}}(y_r^\delta)=0. $
\end{proof} 

The goal of this subsection is the following proposition.

\begin{prop}[cf. \cite{Ku} p.44]
\label{ind}
We have the following equalities on elements of $R_N$:

\begin{enumerate}
\item $\delta (\det M) \bar{\phi}^{\ell_2}(x_{1,q})= \pm \delta^2 (\det M_1) \bar{\varphi}_{F,N, \chi}(\mrm{cyc}(\rho_m)_\chi)$; 
\item 
$\delta (\det M_{r-1}) \bar{\phi}^{\ell_{r+1}}(x_{\nu_{r},q}) = \pm \delta (\det M_{r}) \bar{\phi}^{\ell_{r}}(x_{\nu_{r-1},q})$ for any $r$ with $2 \le r \le i $,
\end{enumerate}
where the signs $\pm$ in (1) and (2) do not depend on $m$.
\end{prop}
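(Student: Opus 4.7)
The plan is to feed each element $x_{\nu_r,q}^\delta$ (for $r = 1, \dots, i$) into the commutative diagram (\ref{diagram2}) and to extract the desired identities by applying Cramer's identity $\mrm{adj}(A)\,A = (\det A)\,I$ to the resulting linear system in $R_N^h$.

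First, I would upgrade Lemma \ref{ab} from $\nu = \nu_i$ to each intermediate $\nu_r$ with $1 \le r \le i$, namely:
\begin{align*}
\beta_{m_{s-1}}(x_{\nu_r,q}^\delta) &= 0 \quad \text{for } 2 \le s \le r,\\
\alpha_j\bigl([x_{\nu_r,q}^\delta]_{F,\chi}\bigr) &= 0 \quad \text{for } j \notin \{n_1, \dots, n_r\}.
\end{align*}
The second assertion is immediate from Proposition \ref{[] and phi and x} (1) and the definition of $\alpha$ (which sends $q_F \mapsto \mb{e}_{n_1}$ and $\ell_{s,F} \mapsto \mb{e}_{n_s}$). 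The first follows by the same telescoping as in the proof of Lemma \ref{ab}: set $y_s := x_{\nu_r,q}\prod_{s'=s}^{r} b_{s'}^{\bar\phi^{\ell_{s'}}(x_{\nu_r/\ell_{s'},q})}$, note that $\beta(b_{s'}) = 0$ by the commutative diagram applied to $b_{s'}$ (since $\alpha([b_{s'}]_{F,\chi}) = \mb{e}_{n_{s'}} - \mb{e}_{n_{s'}} = 0$), so $\beta(x_{\nu_r,q}^\delta) = \beta(y_s^\delta)$; cancellation of $\ell_{s'}$-components via Proposition \ref{[] and phi and x} (2) then places $y_s^\delta$ in $\mca{F}_{Q\nu_{s-1},N}$; finally, Proposition \ref{[] and phi and x} (3) together with $\bar a_{\ell_s} = -\bar\phi^{\ell_s}(b_s)$ and the vanishing $\bar\phi^{\ell_s}(b_{s'}) = 0$ for $s' > s$ (from (x2)) forces $\bar\phi^{\ell_s}(y_s) = 0$, whence (x3) gives $\beta_{m_{s-1}}(y_s^\delta) = \delta \bar\phi^{\ell_s}(y_s) = 0$.

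Next, Proposition \ref{[] and phi and x} (1) and Lemma \ref{delta kowaza 2} place $x_{\nu_r,q}^\delta$ in $\mca{F}/p^N$, so (\ref{diagram2}) yields
$$M\cdot \beta(x_{\nu_r,q}^\delta) = \alpha\bigl([x_{\nu_r,q}^\delta]_{F,\chi}\bigr) \quad \text{in } R_N^h.$$
Writing $\mb{v} := \beta(x_{\nu_r,q}^\delta)$ and $\mb{\alpha} := \alpha([x_{\nu_r,q}^\delta]_{F,\chi})$, the first step above gives $v_{m_s} = 0$ for $s < r$ and $\alpha_j = 0$ for $j \notin \{n_1, \dots, n_r\}$. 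The system $M\mb{v} = \mb{\alpha}$ therefore reduces, after deleting the zero-entry columns and the rows indexed by $n_1, \dots, n_{r-1}$, to a square $(h-r+1) \times (h-r+1)$ system $M_{r-1}\,\mb{w} = \mb{b}$, where $\mb{w} = (v_{m_r}, v_{m_{r+1}}, \dots)^t$ and $\mb{b}$ has a unique non-zero entry $\alpha_{n_r}$ in the row corresponding to $n_r$. Multiplying on the left by $\mrm{adj}(M_{r-1})$ and reading off the component corresponding to $v_{m_r}$ gives
$$(\det M_{r-1})\,v_{m_r} = \pm\,\alpha_{n_r}\,\det M_r,$$
where the sign is $(-1)^{k+k'}$ with $k, k'$ the positions of $n_r$ and $m_r$ in the ordered lists of non-removed row and column indices --- depending only on the fixed permutations $\{n_*\}, \{m_*\}$, hence independent of $m$.

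Finally, I would substitute. Condition (x3) of the construction of $\ell_{r+1}$ gives $v_{m_r} = \delta \bar\phi^{\ell_{r+1}}(x_{\nu_r,q})$. For $r \ge 2$, Proposition \ref{[] and phi and x} (2) with $\ell = \ell_r$ and $\nu_r/\ell_r = \nu_{r-1}$ gives $\alpha_{n_r} = \delta [x_{\nu_r,q}]_{F,\chi}^{\ell_r} = -\delta \bar\phi^{\ell_r}(x_{\nu_{r-1},q})$, which on substitution yields (2). For $r = 1$, only the $q$-component of $[x_{1,q}]_{F,\chi}$ is non-zero, so $\alpha_{n_1} = -\delta \bar\phi^q(\mrm{cyc}(\rho_m)_\chi)$; condition (q2) applied to $u\cdot\mrm{cyc}(\rho_m)_\chi \in \mca{NO}_{F,N,\chi}$ combined with the $R_N$-linearity of $\bar\phi^q$ and $\bar\varphi_{F,N,\chi}$ gives $\bar\phi^q(\mrm{cyc}(\rho_m)_\chi) = \delta \bar\varphi_{F,N,\chi}(\mrm{cyc}(\rho_m)_\chi)$, which yields (1) after using $M_0 = M$. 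The main bookkeeping obstacle is the extension of Lemma \ref{ab} to intermediate $\nu_r$: one must verify that at step $r$ the telescoping uses only the already-constructed $\ell_2, \dots, \ell_r$ and the conditions (x2), (x3) imposed at those steps --- but this is guaranteed by the inductive nature of the construction.
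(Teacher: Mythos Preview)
Your proposal is correct and follows essentially the same route as the paper: feed $x_{\nu_r,q}^\delta$ into the diagram (\ref{diagram2}) to obtain $M\,\beta(x_{\nu_r,q}^\delta) = \alpha([x_{\nu_r,q}^\delta]_{F,\chi})$, use the vanishing from Lemma \ref{ab} to truncate to a square system with $M_{r-1}$, then multiply by the adjugate and read off the $m_r$-th entry. The only cosmetic difference is that the paper observes ``it is sufficient to prove the assertion when $r=i$'' (since the case $r<i$ is literally the case $r=i$ for the construction stopped at stage $r$), whereas you make this explicit by restating Lemma \ref{ab} for each intermediate $\nu_r$; both amount to the same argument.
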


\begin{proof}
For each $r$ satisfying $1 \le r \le i $ we put $$\mb{x}^{(r)}:=\beta(x_{\nu_r,q}^\delta) \in R_N^h\ \ \text{and}\ \ \mb{y}^{(r)}:=\alpha(x_{\nu_r,q}^\delta) \in R_N^h,$$
and regard them as column vectors.
Then, we have $\mb{y}^{(r)}=M\mb{x}^{(r)}$
in $R_N^h$.

We first prove the assertion (1) of the above proposition.
Since $x_{1,q}^{\delta}$ is an element of $\mca{F}_{q,N}$, we have 
\begin{align*}
\mb{y}^{(1)}&=\delta[\kappa_q(\xi_q)_\chi]_{F,N,\chi}^q\mb{e}_{n_1}^{(m)} & \\
&=-\delta \bar{\phi}^{q}(\mrm{cyc}(\rho_m)_\chi)\mb{e}_{n_1}^{(m)} &( \text{by Proposition \ref{[] and phi} (2)}) \\
&=-\delta^2 \bar{\varphi}_{F,N, \chi}(\mrm{cyc}(\rho_m)_\chi)\mb{e}_{n_1}^{(m)} &( \text{by condition (q2)}).
\end{align*}
Let $\widetilde{M}$ be the matrix of cofactors of $M$. Multiplying the both sides of $\mb{y}^{(1)}=M\mb{x}^{(1)}$ by $\widetilde{M}$,
and comparing the $m_1$-st components, we obtain 
$$(-1)^{n_1+m_1+1}(\det M_1) \delta^2 \bar{\varphi}_{F,N, \chi}(\mrm{cyc}(\rho_m)_\chi)=(\det M) \beta_{m_1}(x_{1,q}^\delta).$$
By condition (x3) for $\ell_2$, we have $\beta_{m_1}(x_{1,q}^\delta) = \delta \bar{\phi}^{\ell_2}(x_{1,q}),$
and the assertion (1) follows.

Next, we assume $i \ge 2$, and we shall prove Proposition \ref{ind} (2). The proof is essentially the same as the proof of assertion (1). 
It is sufficient to prove the assertion when $r=i$.
We write $\mb{x}=\mb{x}^{(i)}$ and $\mb{y}=\mb{y}^{(i)}$.
Let $\mb{x}' \in R_N^{h-i+1}$ be the vector obtained from $\mb{x}$ by eliminating the $m_j$-th rows for $j=1,...,i-1$, and $\mb{y}'$ the vector obtained from $\mb{y}$ by eliminating the $n_k$-th rows  for $k=1,...,i-1$. 
Since the $m_r$-th rows of $\mb{x}$ are 0 for all $r$ with $1 \le r \le i-1$ by Lemma \ref{ab} (1), we have $\mb{y}'=M_{i-1}\mb{x}'.$ 
We assume the $m_i'$-th component of $\mb{x}'$ corresponds to the $m_i$-th component of $\mb{x}$, and the $n_i'$-th component of $\mb{y}'$ corresponds to the $n_i$-th component of $\mb{y}$. 
By Lemma \ref{ab} (2) and Proposition \ref{[] and phi and x} (2), we have 
$$\mb{y}'=-\delta \bar{\phi}^{\ell_i}(x_{\nu_{i-1},q})\mb{e'}_{n_i '}^{(m)},$$
where $( \mb{e'}_i^{(m)} )_{i=1}^{h-i+1}$ denotes the standard basis of $R_N^{h-i+1}$.

Let $\widetilde{M}_{i-1}$ be the matrix of cofactors of $M_{i-1}$. Multiplying the both sides of $\mb{y}'=M_{i-1}\mb{x}'$ by $\widetilde{M}_{i-1}$,
and comparing the $m_i '$-th components, we obtain 
$$(-1)^{n_i'+m_i'+1}(\det M_i) \delta \bar{\phi}^{\ell_i}(x_{\nu_{i-1},q})=(\det M_{i-1}) \beta_{m_i}(x_{\nu,q}^\delta).$$
By condition (x3) for $\ell_{i+1}$, and since $x_{n,q}^\delta $ is an element of $\mca{F}_{Q\nu,N}$, we have $$\beta_{m_i}(x_{\nu,q}^\delta) = \delta \bar{\phi}^{\ell_{i+1}}(x_{\nu,q}).$$
Here, the proof of Proposition \ref{ind} is complete.
\end{proof}  

\subsection{} 
Now we prove Theorem \ref{Main theorem}.
\begin{proof}[Proof of Theorem \ref{Main theorem} ]

We may vary $m$ in the proof of Theorem \ref{Main theorem}. So, the element $\bar{\phi}^{\ell_{r+1}}(x_{\nu_r,q}) \in R_N=(\bb{Z}/p^{N_m})[\Gal(F_m/F_0)]_\chi$ defined in \S 6.2 is denoted by $\bar{\phi}^{\ell_{r+1}}(x_{\nu_r,q})_m$.

By induction on $r$, we shall prove that $ \big( \bar{\phi}^{\ell_{r+1}}(x_{\nu_{r},q})_m \big)_{m\ge 0}$ converges to $\pm \delta \det M_r \in \Lambda_{\chi}$, where a sequence $(a_m)_{m \ge 0} \in \prod_{m \ge 0} R_{F_m,N_m,\chi}$ is said to {\em converge} to $b=(b_m)_{m \ge 0} \in \plim_{m \ge 0}R_{F_m,N_m,\chi}=\Lambda_{\chi}$ if for each $m$, there exists an integer $L_m$ such that the image of $a_{m'}$ in $R_{F_m,N_m,\chi}$ coincides to $b_m \in R_{F_m,N_m,\chi}$ for any $m' \ge L_m$.

First, we consider the equality $\delta (\det M) \bar{\phi}^{\ell_2}(x_{1,q})_m= \pm \delta^2 (\det M_1) \bar{\varphi}_{F,N, \chi}(\mrm{cyc}(\rho_m)_\chi).$
Since the right hand side converges to $\pm \delta^2(\det M_1)(\det M)$ and both $\delta$ and $\det M$ are non-zero element, we find that $\big( \bar{\phi}^{\ell_2}(x_{1,q})_m \big)_{m\ge 0}$ converges to $\pm \delta \det M_1$.
(Note the sign $\pm$ does not depend on $m$, see Proposition \ref{ind}). 

Next, we assume that $\big( \bar{\phi}^{\ell_r}(x_{\nu_{r-1},q})_m \big)$ converges to $\pm \delta \det M_{r-1}$.
Then, the right hand side of $\delta (\det M_{r-1}) \bar{\phi}^{\ell_{r+1}}(x_{\nu_r,q}) = \pm \delta (\det M_{r}) \bar{\phi}^{\ell_{r}}(x_{\nu_{r-1},q})$
converges to \\ $\pm \delta^2(\det M_r)(\det M_{r-1})$.
Since we take $\det M_{r-1}\not= 0$, 
the sequence $\big( \bar{\phi}^{\ell_{r+1}}(x_{\nu_r,q})_m \big)_{m \ge 0}$ converges to $\pm \delta \det M_r$.

By induction, we conclude $\big( \bar{\phi}^{\ell_{i+1}}(x_{\nu,q})_m \big)$ converges to $\pm \delta \det M_i$.
Since $(x_{\nu,q})_m \in W_{F_m,N_m}^{q\nu }$ with $\epsilon (q\nu)=i$, we have $\bar{\phi}^{\ell_{i+1}}(x_{\nu,q})_m \in\mf{C}_{i,F_m,N \chi}$ for all $m \in \bb{Z}_{\ge 0}$. 
Hence we have $\delta \det M_i \in \mf{C}_{i,\chi}$. This completes the proof of Theorem \ref{Main theorem}.
\end{proof}

\begin{rem}\label{structure of trivial character part}
We remark on the higher Fitting ideals of the trivial character parts of $X$ and $X'$.
Let $1_{\Delta}$ be the trivial character in $\widehat{\Delta}$.
It is a well-known fact that $X_{1_{\Delta}}=0$ (cf.\ \cite{Wa} Proposition 15.43).
In particular, we have $$\Fitt_{\Lambda_{1_\Delta},i}(X_{1_\Delta})= \Fitt_{\Lambda_{1_\Delta},i}(X'_{1_\Delta})=\Lambda_{1_\Delta}$$
for $i \ge 0$.
\end{rem}

\subsection{}
Here, we remark on some application of Theorem \ref{Main theorem}.

For each ideal $I$ of $\Lambda_\chi$, we denote the unique minimal principal ideal of $\Lambda_\chi$ containing $I$ by $\mca{P}(I)$. Since $\Lambda_\chi$ is UFD, the ideals $\mca{P}(I)$ are well-defined.
Recall that we denote the smallest number of generators of an $R$-module $M$ by $\nu_R(M)$ (cf.\ \S 3). 
We obtain the following corollary of Theorem \ref{Main theorem}.

\begin{cor}\label{on infty}
Let $\chi $ be a non-trivial character in $\widehat \Delta$. 
Then, $$\Fitt_{\Lambda_\chi,i}(X_{\chi}) \subseteq  \Fitt_{\Lambda_\chi,i}(X'_{\chi}) \subseteq \mca{P}(\mf{C}_{i,\chi})$$
for all $i \ge 0$. In particular, if $\mca{P}(\mf{C}_{i,\chi})\neq\Lambda_\chi$, then we have $\nu_{\Lambda_\chi}(X_{\chi})\ge i+1$ and $\nu_{\Lambda_\chi}(X'_{\chi}) \ge i+1$. 
\end{cor}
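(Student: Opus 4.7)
The plan is to handle the two displayed inclusions separately and then read off the final clause on $\nu_{\Lambda_\chi}$ as a formal consequence. For the first inclusion $\Fitt_{\Lambda_\chi,i}(X_\chi) \subseteq \Fitt_{\Lambda_\chi,i}(X'_\chi)$, I would invoke the standard monotonicity of Fitting ideals under surjections. Starting from any finite presentation $\Lambda_\chi^a \to \Lambda_\chi^b \to X_\chi \to 0$, the canonical surjection $X_\chi \twoheadrightarrow X'_\chi$ extends to a presentation of $X'_\chi$ by adjoining to the relations matrix finitely many columns that lift generators of the finite module $X_{\mrm{fin},\chi}$. The new presentation matrix has the old one as a sub-block, so the full set of $(b-i) \times (b-i)$ minors only grows, and the inclusion follows directly from Definition \ref{Fitt}.

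The main step is the second inclusion $\Fitt_{\Lambda_\chi,i}(X'_\chi) \subseteq \mca{P}(\mf{C}_{i,\chi})$. Theorem \ref{Main theorem}~(2) already gives
$$I \cdot \Fitt_{\Lambda_\chi,i}(X'_\chi) \subseteq \mf{C}_{i,\chi} \subseteq \mca{P}(\mf{C}_{i,\chi}),$$
where $I := \ann_{\Lambda_\chi}(X_{\mrm{fin},\chi})$. Since $X_{\mrm{fin},\chi}$ is pseudo-null, hence finite, the quotient $\Lambda_\chi/I$ embeds into $\End_{\bb{Z}}(X_{\mrm{fin},\chi})$ and is therefore a finite ring; in particular $I$ is an ideal of finite index in $\Lambda_\chi$. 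Writing $\mca{P}(\mf{C}_{i,\chi}) = (f)$ and fixing an arbitrary $g \in \Fitt_{\Lambda_\chi,i}(X'_\chi)$, I would argue for each irreducible factor $\pi$ of $f$ that $I \not\subseteq (\pi)$: otherwise $\Lambda_\chi/(\pi)$ would be a quotient of the finite ring $\Lambda_\chi/I$, contradicting the fact that $\Lambda_\chi/(\pi)$ is infinite, being either $\bb{F}_p[[T]]$ (when $\pi$ is an associate of $p$) or a free $\bb{Z}_p$-algebra of positive rank (when $\pi$ is a distinguished polynomial). Picking $\delta_\pi \in I$ coprime to $\pi$, the relation $\delta_\pi g \in (f)$ combined with $\gcd(\pi, \delta_\pi) = 1$ in the UFD $\Lambda_\chi$ forces $\pi^{v_\pi(f)} \mid g$. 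Running over the finitely many prime divisors of $f$ and using unique factorization gives $f \mid g$, hence $g \in \mca{P}(\mf{C}_{i,\chi})$.

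The final clause is immediate: if $\mca{P}(\mf{C}_{i,\chi}) \neq \Lambda_\chi$, then by the two established inclusions $\Fitt_{\Lambda_\chi,i}(X_\chi) \subseteq \Fitt_{\Lambda_\chi,i}(X'_\chi) \subsetneq \Lambda_\chi$, and the general implication $\Fitt_{R,n}(M) \neq R \Rightarrow \nu_R(M) \geq n+1$ recorded just after Definition \ref{Fitt} supplies the required lower bounds. I do not foresee a serious obstacle here; the only real content is the prime-by-prime selection of $\delta$ in the second step, which comes down to the observation that an ideal of finite index in $\Lambda_\chi$ cannot be contained in any height-one prime.
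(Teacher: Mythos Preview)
Your proof is correct and follows the same line as the paper's. The paper's own proof is extremely terse: it simply notes that $X'_\chi$ is a quotient of $X_\chi$ for the first inclusion and says the second inclusion ``follows from Theorem~\ref{Main theorem} immediately.'' Your argument fills in exactly the details one needs to unpack that word ``immediately'': the key point is that $\ann_{\Lambda_\chi}(X_{\mrm{fin},\chi})$, being of finite index, is not contained in any height-one prime of the UFD $\Lambda_\chi$, so multiplying by it cannot change the minimal principal overideal. Your prime-by-prime choice of $\delta_\pi$ is one clean way to say this; an equivalent one-line formulation is that $\mca{P}(I\cdot J)=\mca{P}(I)\,\mca{P}(J)$ in a UFD and $\mca{P}(I)=\Lambda_\chi$ when $I$ has finite index.
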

\begin{proof}
Since $X'_{\chi}$ is a quotient module of $X_{\chi}$, we obtain the first inclusion   
$$\Fitt_{\Lambda_\chi,i}(X_{\chi}) \subseteq  \Fitt_{\Lambda_\chi,i}(X'_{\chi}).$$
The second inclusion follows from Theorem \ref{Main theorem} immediately.
\end{proof}

\begin{rem}\label{equivalence between Th and IMC}
Let $\chi \in \widehat\Delta$ be a non-trivial character. 
Since $X_{\chi}$ is pseudo-isomorphic to $X'_{\chi}$, we have 
$\cha_{\Lambda_\chi}(X_{\chi})=\cha_{\Lambda_\chi}(X'_{\chi}).$
Since $\ann_{\Lambda_{\chi}} (X_{\mrm{fin}, \chi})$ is an ideal of $\Lambda_\chi$ whose index is finite, we have 
$\cha_{\Lambda_\chi}(X'_{\chi})=\Fitt_{\Lambda_\chi,0}(X'_{\chi})=\mca{P}(\mf{C}_{0,\chi})$
by Lemma \ref{no pn} and Theorem \ref{Main theorem} for $i=0$. By Proposition \ref{size}, we have 
$\mca{P}(\mf{C}_{0,\chi})=\cha_{\Lambda_{\chi}}\big( (\overline{\mca{O}_{\infty}^1}/\overline{C_{\infty}^1})_{\chi}\big).$
Hence Theorem 1.1 is a refinement of the Iwasawa main conjecture.
\end{rem}

From Corollary \ref{on infty}, we obtain the following results on the higher Fitting ideals of $A_{m,\chi}$ and $A'_{m,\chi}$.

\begin{cor}\label{on m}
Let $\chi $ be a non-trivial character in $\widehat \Delta$. For each $m \ge 0$, we denote the image of $\mca{P}(\mf{C}_{i,\chi})$ in $R_{F_m,\chi}=\bb{Z}_p[\Gal(F_m/\bb{Q})]_\chi$ by $\mca{P}(\mf{C}_{i,\chi})_m$.
Then,   
$$\Fitt_{R_{F_m,\chi},i}(A_{m,\chi}) \subseteq  \Fitt_{R_{F_m,\chi},i}(A'_{m,\chi}) \subseteq \mca{P}(\mf{C}_{i,\chi})_m$$
for all $m \ge 0$ and $i \ge 0$. In particular, if $\mca{P}(\mf{C}_{i,\chi})\neq\Lambda_\chi$, then we have $\nu_{R_{F_m,\chi}}(A_{m,\chi})\ge i+1$ and $\nu_{\Lambda_\chi}(A'_{m,\chi}) \ge i+1$. 
\end{cor}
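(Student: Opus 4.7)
The plan is to descend Corollary \ref{on infty} to the finite level via the standard base-change property of Fitting ideals.

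First I would identify $A_{m,\chi}$ and $A'_{m,\chi}$ with tensor products over $\Lambda_\chi$. By Proposition \ref{ideal class} we have $X_{\Gamma_m,\chi} \simeq A_{m,\chi}$, and the identification $A'_{m,\chi} = A_{m,\chi}/A_{m,\mrm{fin},\chi}$ used in the proof of Theorem \ref{Main theorem} yields $X'_{\Gamma_m,\chi} \simeq A'_{m,\chi}$. Since $\Lambda_{\chi,\Gamma_m} = R_{F_m,\chi}$, taking $\Gamma_m$-coinvariants coincides with applying $-\otimes_{\Lambda_\chi}R_{F_m,\chi}$, so
\begin{align*}
A_{m,\chi} &\simeq X_\chi \otimes_{\Lambda_\chi} R_{F_m,\chi}, & A'_{m,\chi} &\simeq X'_\chi \otimes_{\Lambda_\chi} R_{F_m,\chi}.
\end{align*}

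Next, I would invoke the base-change formula $\Fitt_{R,i}(M \otimes_{\Lambda_\chi} R) = \Fitt_{\Lambda_\chi,i}(M)\cdot R$ for finitely generated (hence finitely presented, since $\Lambda_\chi$ is Noetherian) $\Lambda_\chi$-modules $M$ and ring quotients $\Lambda_\chi \twoheadrightarrow R$. Multiplying the inclusions in Corollary \ref{on infty} by $R_{F_m,\chi}$ then gives
$$\Fitt_{R_{F_m,\chi},i}(A_{m,\chi}) \subseteq \Fitt_{R_{F_m,\chi},i}(A'_{m,\chi}) \subseteq \mca{P}(\mf{C}_{i,\chi})\cdot R_{F_m,\chi} = \mca{P}(\mf{C}_{i,\chi})_m,$$
which is the main assertion.

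Finally, for the statement on numbers of generators: both $\Lambda_\chi \simeq \bb{Z}_p[[T]]$ and $R_{F_m,\chi} \simeq \bb{Z}_p[[T]]/\big((1+T)^{p^m}-1\big)$ are complete local rings, and the natural surjection sends the maximal ideal of $\Lambda_\chi$ into the maximal ideal of $R_{F_m,\chi}$. Consequently, a generator of the principal ideal $\mca{P}(\mf{C}_{i,\chi})$ is a non-unit in $\Lambda_\chi$ if and only if its image in $R_{F_m,\chi}$ is a non-unit, so $\mca{P}(\mf{C}_{i,\chi}) \neq \Lambda_\chi$ implies $\mca{P}(\mf{C}_{i,\chi})_m \neq R_{F_m,\chi}$. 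The inclusion just proved then forces $\Fitt_{R_{F_m,\chi},i}(A_{m,\chi}) \neq R_{F_m,\chi}$, and the general remark following Definition \ref{Fitt} yields $\nu_{R_{F_m,\chi}}(A_{m,\chi}) \ge i+1$; the same reasoning applied to $A'_{m,\chi}$ finishes the proof. I do not expect any substantive obstacle here, as the argument is essentially a formal consequence of Corollary \ref{on infty}; the only care needed is the straightforward verification that Fitting ideals commute with the specific surjection $\Lambda_\chi \twoheadrightarrow R_{F_m,\chi}$ and that non-units are preserved by this quotient, both of which are immediate from the local-ring structure.
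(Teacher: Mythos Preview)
Your proposal is correct and matches the paper's approach: the paper does not give an explicit proof of this corollary, merely introducing it with ``From Corollary \ref{on infty}, we obtain the following results,'' and your argument via base-change of Fitting ideals along $\Lambda_\chi \twoheadrightarrow R_{F_m,\chi}$ is exactly the standard way to make that descent precise. The identifications $A_{m,\chi}\simeq X_{\chi,\Gamma_m}$ and $A'_{m,\chi}\simeq X'_{\chi,\Gamma_m}$ you use are those already set up in Proposition \ref{ideal class} and \S 7.1, so there is nothing to add.
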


\begin{rem}
We can know more about $\nu_{R_{F_m,\chi}}(A_{m,\chi})$ and $\nu_{R_{F_m,\chi}}(A_{m,\chi})$ by direct application of the result in \cite{MR} (see Remark \ref{0-layer}) without using Theorem \ref{Main theorem}.  
Let $\chi \in \widehat \Delta$ be a non-trivial character. Suppose $N$ is sufficiently large. 
By Nakayama's lemma, the following conditions are equivalent:
\begin{enumerate}
\item ${\rm dim}_{\bb{F}_p}A_{0,\chi}/p=i+1$;
\item $\nu_{\Lambda_\chi}(X_{\chi})=i+1$;
\item $\nu_{R_{F_m,\chi}}(A_{m,\chi})=i+1$;
\item $\Fitt_{\Lambda_\chi,i}(X_{\chi})\neq\Lambda_\chi$ and $\Fitt_{\Lambda_\chi,i+1}(X_{\chi})=\Lambda_\chi$;
\item $\Fitt_{R_{F_m,\chi},i}(A_{m,\chi})\neq R_{F_m,\chi}$ and $\Fitt_{R_{F_m,\chi},i+1}(A_{m,\chi})=R_{F_m,\chi}$;
\item $\mf{C}_{i,F_m,N}\neq R_{F_m,\chi}$ and $\mf{C}_{i+1,F_m,N}=R_{F_m,\chi}$.
\end{enumerate}
\end{rem}

\end{document}